\newtheorem{thm}{Theorem}[section]
\newtheorem{cor}[thm]{Corollary}
\newtheorem{lem}[thm]{Lemma}
\newtheorem{prop}[thm]{Proposition}
\newtheorem{defn}[thm]{Definition}
\newtheorem{rem}[thm]{Remark}
\newtheorem*{aspta}{Assumption A}
\newtheorem{example}[thm]{Example}
\numberwithin{equation}{section}
\newcommand{\argmax}{\mathop{\mathrm{argmax}}}
\newcommand{\pen}{\mathrm{pen}}
\newcommand{\cl}{\mathop{\mathrm{cl}}}
\newcommand{\vvverto}{\mathopen{|\hspace{-0.12em}|\hspace{-0.12em}|}}
\newcommand{\vvvertc}{\mathclose{|\hspace{-0.12em}|\hspace{-0.12em}|}}
\newcommand{\tnorm}[1]{\vvverto #1 \vvvertc}
\renewcommand{\limsup}{\mathop{\overline{\mathrm{lim}}}}
\renewcommand{\liminf}{\mathop{\underline{\mathrm{lim}}}}
\begin{document}

\title{Consistent order estimation and minimal penalties}

\author{Elisabeth Gassiat}
\address{Laboratoire de Math{\'e}matiques, 
Universit{\'e} Paris-Sud, 
B{\^a}timent 425, 
91405 Orsay Cedex, France}
\email{elisabeth.gassiat@math.u-psud.fr}

\author{Ramon van Handel}
\address{Sherrerd Hall, Room 227,
Princeton University, 
Princeton, NJ 08544, USA.}
\email{rvan@princeton.edu}

\begin{abstract}
Consider an i.i.d.\ sequence of random variables whose distribution 
$f^\star$ lies in one of a nested family of models $\mathcal{M}_q$, 
$q\ge 1$.  The smallest index $q^\star$ such that 
$\mathcal{M}_{q^\star}$ contains $f^\star$ is called the model order. We 
establish strong consistency of the penalized likelihood order estimator 
in a general setting with penalties of order $\eta(q)\log\log n$, where 
$\eta(q)$ is a dimensional quantity.  Moreover, such penalties are shown 
to be minimal.  In contrast to previous work, an a priori upper bound on 
the model order is not assumed. The results rely on a sharp 
characterization of the pathwise fluctuations of the generalized 
likelihood ratio statistic under entropy assumptions on the model 
classes.  Our results are applied to the geometrically complex problem
of location mixture order estimation, which is widely used but 
poorly understood.
\end{abstract}

\keywords{consistent order estimation; penalized likelihood;
uniform law of iterated logarithm; location mixtures}

\maketitle

\section{Introduction}
\label{sec:intro}

Let $(X_k)_{k\ge 1}$ be a sequence of random variables whose 
distribution $f^\star$ lies in one of a nested family of models 
$(\mathcal{M}_q)_{q\ge 1}$, indexed (and ordered) by the integers. We 
define the model order as the smallest index $q^\star$ such that the 
true distribution $f^\star$ lies in the corresponding model class. The 
model order typically determines the most parsimonious representation of 
the true distribution of the underlying model (for example, it might 
determine the parametrization of the model which has the smallest 
possible dimension).  On the other hand, the model order often has a 
concrete interpretation in terms of the modelling of the underlying 
phenomenon (for example, the estimation of the number of clusters in a 
data set, or the number of regimes in an economic time series).  
Therefore, the problem of estimating the model order from observed data 
is of significant practical, as well as theoretical, interest.

Of course, a satisfactory solution to this problem must provide an 
estimation method that does not assume prior knowledge on the 
unknown distribution $f^\star$. In particular, prior bounds on model 
order and on parameter sets should be avoided.  Yet, in this light, even 
one of the most widely used model selection criteria---the Bayesian 
Information Criterion (BIC) of Schwarz---is poorly understood.  The 
chief motivation for the use of BIC (as opposed to other model selection 
criteria, such as Akaike's Information Criterion) is that it is expected 
to yield a strongly consistent estimator of the model order.  However, 
almost all existing consistency proofs assume a prior upper bound on the 
order as well as compactness of the parameter set. As is 
emphasized by Csisz{\'a}r and Shields \cite{CS00}, this is hardly 
satisfactory from the theoretical point of view, and provides little 
confidence in the basic motivation for this method.  More delicate 
questions, such as the minimal penalty that yields a consistent order 
estimator in absence of a prior bound on the order, remain open 
(the problem of identifying the minimal penalty, which minimizes the 
probability of underestimating the order, is also raised in 
\cite{CS00}).

In this paper we consider a general class of penalized likelihood
order estimators of the form
$$
        \hat q_n = \argmax_{q\ge 1}\left\{
        \sup_{f\in\mathcal{M}_q}\ell_n(f) - \pen(n,q)\right\},
$$
where $\pen(n,q)$ is a penalty function and $\ell_n(f)$ is the 
likelihood of $(X_k)_{1\le k\le n}$ under the distribution $f$. Our aim is to 
understand what penalties yield strong consistency of the order 
estimator, that is, $\hat q_n\to q^\star$ as $n\to\infty$ a.s. 
Characterizing strong consistency hinges on a precise understanding of 
the pathwise fluctuations of the likelihood ratio statistic
$$
        \sup_{f\in\mathcal{M}_q}\ell_n(f) -
        \sup_{f\in\mathcal{M}_{q^\star}}\ell_n(f),
$$
as $n\to\infty$, \emph{uniformly} in the model order $q>q^\star$.
When there is a known upper bound on the order 
$q^\star\le q_{\rm max}<\infty$ and the model classes $\mathcal{M}_q$ 
are regularly parametrized by a compact subset of Euclidean space, an 
upper bound on the pathwise fluctuations can be obtained by classical 
parametric methods: Taylor expansion of the likelihood and an application 
of a law of iterated logarithm.  This approach forms the basis for most 
consistency proofs for penalized likelihood order estimators in the 
literature, for example 
\cite{HQ79,Nis88,Fin90,Ker00,Cha06}.  However, such techniques fail in 
the absence of a prior upper bound: even though each model class 
$\mathcal{M}_q$ is finite dimensional, the full model 
$\mathcal{M}=\bigcup_q\mathcal{M}_q$ is infinite dimensional and, as 
such, the problem in the absence of a prior upper bound is inherently 
nonparametric. 
When the classes $\mathcal{M}_q$ are noncompact
one must introduce sieves $\mathcal{M}_q^n 
\subset\mathcal{M}_q^{n+1}\subset\cdots\subset\mathcal{M}_q$,
complicating the problem further (in this case even the parametric
theory remains poorly understood \cite{Har85,BC93,LS04}).  
An entirely different approach based on 
universal coding theory \cite{Fin90,Kie93,LN94,GB03,CMR05,CGG09} yields 
bounds on the pathwise fluctuations that do not 
require prior bounds on the order or compactness of the models.  
However, these bounds are far from tight and cannot even establish 
consistency of BIC, let alone smaller penalties (this appears to be a 
fundamental limitation of this approach due to Rissanen's theorem, see 
\cite{Ris86,BRY99}).

The problem area that is investigated in this paper was initiated in the 
work of Csisz{\'a}r and Shields \cite{CS00,Csis02}, who proved 
consistency of BIC for Markov chain order estimation in absence of a 
prior bound on the order (see also \cite{CsisTala}).  To our knowledge, 
little progress has been made on this subject beyond their work.  The 
proofs in \cite{CS00,Csis02} rely heavily on the availability of an 
explicit expression for the maximum likelihood for Markov chains, and 
employ delicate estimates specific to that setting. Their techniques are 
therefore not well suited to investigating such problems in other 
settings.  Moreover, the methods of \cite{CS00,Csis02} do not yield 
minimal penalties.  However, the Markov chain case was recently 
reconsidered in \cite{Han09} using very different techniques based 
on empirical process theory, which are potentially much more generally 
applicable and which shed light on minimal penalties.

The main results of this paper provide generally applicable upper and 
lower bounds on the pathwise fluctuations of the likelihood ratio 
statistic uniformly in the model order $q>q^\star$, for the case of 
i.i.d.\ observations $(X_k)_{k\ge 1}$, without a prior bound on the 
model order and in possibly noncompact parameter spaces.  These results 
are then used to investigate strong consistency of penalized likelihood 
order estimators. We use empirical process methods as in \cite{Han09}, 
but the difficulties to be surmounted in the present setting are of a 
different nature. The main difficulty for Markov chain models in 
\cite{CS00,Csis02,Han09} is their dependence structure;
in the present paper we assume i.i.d.\ models.  On the other hand, 
the geometric structure of Markov chains is exceedingly simple: the 
family of $q$th-order Markov chains in the Hellinger distance is simply 
a Euclidean ball when viewed in the appropriate parametrization. In 
contrast, in general order estimation problems, one is often faced with 
model classes that are geometrically very complex. An important case 
study that will be considered in this paper are location mixture models 
(widely used in practice for clustering), which possess a notoriously 
complicated non-regular geometry.  We will be able, for example, to 
establish strong consistency of BIC for mixture order estimation in
absence of a prior bound on the order or on the parameter set, providing
a counterpart to the results of Csisz{\'a}r and Shields \cite{CS00} in
a setting very different than that of Markov chains.

The techniques developed here originate in our attempts to understand 
the order estimation problem for hidden Markov models (HMM) \cite{GB03}. 
In that setting, consistency of BIC (even with a prior bound on the 
order) remains unknown.  The two cases considered here and in 
\cite{Han09}---Markov chains and i.i.d.\ mixtures---can be viewed as two 
extreme cases of HMM.  While our approach provides a substantial step 
towards understanding the HMM setting, a striking and as of yet poorly 
understood breakdown in the ergodicity of HMM \cite{GK00} has so far 
impeded further progress in this direction.

The remainder of this paper is organized as follows.  Section 
\ref{sec:pathwise} introduces the general model under consideration, and 
states our results on the pathwise fluctuations of the likelihood ratio 
statistic.  Section \ref{sec:consist} derives the consequences for 
order estimation, and considers also the special case of location
mixture models. Proofs are given in the appendices.

\section{Pathwise fluctuations of the likelihood}
\label{sec:pathwise}

\subsection{Basic setting and notation}
\label{sec:setting}

Let $(E,\mathcal{E},\mu)$ be a measure space.  For each $q,n\ge 1$, let 
$\mathcal{M}_q^n$ be a given family of strictly positive probability 
densities with respect to $\mu$ (that is, we assume that $\int f d\mu=1$ 
and that $f>0$ $\mu$-a.e.\ for every $f\in\mathcal{M}_q^n$).  Moreover, 
we assume that $(\mathcal{M}_q^n)_{q,n\ge 1}$ is a nested family of 
models in the sense that $\mathcal{M}_q^n\subseteq\mathcal{M}_{q+1}^n$ 
and $\mathcal{M}_q^n\subseteq\mathcal{M}_q^{n+1}$ for all $q,n\ge 1$. 
Let $\mathcal{M}_q = \bigcup_{n}\mathcal{M}_q^n$, 
$\mathcal{M}^n = \bigcup_{q}\mathcal{M}_q^n$, 
$\mathcal{M}=\bigcup_{q,n}\mathcal{M}_q^n$.

Consider an i.i.d.\ sequence of $E$-valued random variables $(X_k)_{k\ge 
1}$ whose common distribution under the measure $\mathbf{P}^\star$ is 
$f^\star d\mu$, where $f^\star\in 
\mathcal{M}_{q^\star}\backslash\cl\mathcal{M}_{q^\star-1}$ for some 
$q^\star\ge 1$ (here $\cl\mathcal{M}_{q}$ denotes the 
$L^1(d\mu)$-closure of $\mathcal{M}_q$).  The index $q^\star$ is called 
the \emph{model order}.  Let us define 
$$
        \ell_n(f) = \sum_{i=1}^n \log f(X_i),\qquad\quad
        f\in\mathcal{M}.
$$
Evidently $\ell_n(f)$ is the log-likelihood of the i.i.d.\ sequence 
$(X_k)_{k\le n}$ when $X_k\sim f d\mu$.    Our aim is to study the 
pathwise fluctuations of the likelihood ratio statistic
$$
        \sup_{f\in\mathcal{M}_q^n}\ell_n(f) -
        \sup_{f\in\mathcal{M}_{q^\star}^n}\ell_n(f)
$$
as $n\to\infty$, \emph{uniformly} over the order parameter $q\ge q^\star$. 
Pathwise upper and lower bounds on the likelihood ratio statistic are the 
key ingredient in the study of strong consistency of penalized likelihood 
order estimators (see section \ref{sec:consist}).

\begin{example}[Location mixtures]
\label{ex:locmix}
The guiding example for our theory, the case of location mixtures,
will be studied in detail in section 
\ref{sec:mixorder} below.  We presently introduce this example in order
to clarify our basic setup.

Let $E=\mathbb{R}^d$ (with its Borel $\sigma$-field $\mathcal{E}$)
and let $\mu$ be the Lebesgue measure on $\mathbb{R}^d$.  We fix a
strictly positive probability density $f_0$ with respect to $\mu$,
and define $f_\theta(x) = f_0(x-\theta)$ for $x,\theta\in\mathbb{R}^d$.
Fix a sequence $T(n)\uparrow\infty$ and define
$$
	\mathcal{M}_q^n = \left\{
	\sum_{i=1}^q\pi_if_{\theta_i}:
	\pi_i\ge 0,~
	\sum_{i=1}^q \pi_i = 1,~
	\|\theta_i\|\le T(n)
	\right\}.
$$
Then $\mathcal{M}_q$ is the family of all $q$-component mixtures of 
translates of the density $f_0$, while $\mathcal{M}_q^n$ is the subset
of the mixtures $\mathcal{M}_q$ whose translation parameters 
$(\theta_i)_{i=1,\ldots,q}$ are restricted to a ball of radius $T(n)$.
The number of components $q^\star$ of the true mixture $f^\star\in\mathcal{M}$ 
can be estimated from observations using the order estimator
$$
        \hat q_n = \argmax_{q\ge 1}\left\{
        \sup_{f\in\mathcal{M}_q^n}\ell_n(f) - \pen(n,q)\right\}.
$$
Pathwise control of the likelihood ratio statistic allows us to
identify what penalties $\pen(n,q)$ and cutoff sequences $T(n)$ 
yield strong consistency of $\hat q_n$
(cf.\ section \ref{sec:mixorder}).
\end{example}

\begin{rem}
\label{rem:esssup}
To avoid measurability problems and other technical complications, we 
employ throughout this paper the simplifying convention that all uncountable 
suprema (such as $\sup_{f\in\mathcal{M}_q^n}\ell_n(f)$) are interpreted as 
essential suprema with respect to the measure $\mathbf{P}^\star$.  In 
the majority of applications the model classes $\mathcal{M}_q^n$ will be 
separable, in which case the supremum and essential supremum coincide.
\end{rem}

In the sequel, we will denote by $\|\cdot\|_p$ the
$L^p(f^\star d\mu)$-norm, that is, $\|g\|_p^p = \int |g(x)|^p
f^\star(x)\mu(dx)$, and we denote by $\langle 
f,g\rangle=\int f(x)g(x)f^\star(x)\mu(dx)$ the Hilbert space inner 
product in $L^2(f^\star d\mu)$.  Define the Hellinger distance
$$
        h(f,g)^2 = \int (\sqrt{f}-\sqrt{g})^2d\mu,\qquad
        f,g\in\mathcal{M}.
$$ 
It is easily seen that $h(f,f^\star) = \|\sqrt{f/f^\star}-1\|_2$. Finally,
we will denote by $\mathcal{N}(\mathcal{Q},\delta)$ for any class of functions 
$\mathcal{Q}$ and $\delta>0$ the minimal number of brackets of 
$L^2(f^\star d\mu)$-width $\delta$ needed to cover $\mathcal{Q}$: that 
is, $\mathcal{N}(\mathcal{Q},\delta)$ is the smallest cardinality $N$ of 
a collection of pairs of functions $\{g_i^L,g_i^U\}_{i=1,\ldots, N}$ 
such that $\max_{i\le N}\|g_i^U-g_i^L\|_2\le\delta$ and for every 
$g\in\mathcal{Q}$ we have $g_i^L\le g\le g_i^U$ pointwise for some $i\le N$.

\subsection{Upper bound}

We aim to obtain a pathwise upper bound on the likelihood ratio 
statistic that holds \emph{uniformly} in $q>q^\star$. To this end, 
define for $q,n\ge 1$ and $\varepsilon>0$ the Hellinger ball
$$
	\mathcal{H}_q^n(\varepsilon)=\{\sqrt{f/f^\star}:
	f\in\mathcal{M}_q^n,~h(f,f^\star)\le\varepsilon\}.
$$
Note that the definition of $\mathcal{H}_q^n(\varepsilon)$ depends on 
$f^\star$ (which is fixed throughout the paper).  The following result 
shows that the geometry of the Hellinger balls 
$\mathcal{H}_q^n(\varepsilon)$ controls the pathwise fluctuations of 
the likelihood ratio statistic.

\begin{thm}
\label{thm:upperlil}
Suppose that for all $n$ sufficiently large
$$
        \mathcal{N}(\mathcal{H}_q^n(\varepsilon),\delta)
        \le
        \left(\frac{K(n)\varepsilon}{\delta}\right)^{\eta(q)}
$$
for all $q\ge q^\star$ and $\delta\le\varepsilon$,
where $K(n)\ge 1$ and $\eta(q)\ge q$ are increasing functions.  Then
$$
        \limsup_{n\to\infty}\frac{1}{\log K(2n)\vee\log\log n} 
        \sup_{q\ge q^\star} \frac{1}{\eta(q)}
        \left\{
        \sup_{f\in\mathcal{M}_q^n}\ell_n(f) -
        \sup_{f\in\mathcal{M}_{q^\star}^n}\ell_n(f)
        \right\} \le C
$$
$\mathbf{P}^\star$-a.s., where $C>0$ is a universal constant.
\end{thm}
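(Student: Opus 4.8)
The plan is to convert the likelihood ratio statistic into a quantity controlled by the empirical process indexed by $\mathcal{H}_q^n(\varepsilon)$, and then to apply a uniform law of the iterated logarithm adapted to the bracketing entropy bound. The starting point is the classical device of comparing the log-likelihood to the centered square-root difference: since $2\log x \le x^2 - 1$ for $x>0$ with equality at $x=1$, writing $g = \sqrt{f/f^\star}$ one gets
$$
        \ell_n(f) - \ell_n(f^\star)
        = \sum_{i=1}^n 2\log g(X_i)
        \le \sum_{i=1}^n \{g(X_i)^2 - 1\}.
$$
This is too crude by itself (the right side has the wrong centering), so the standard refinement is to split $2\log g = 2\log g - (g^2-1) + (g^2-1)$ and exploit that $2\log x - (x^2-1) \le -(x-1)^2$ type bounds, or equivalently to peel off the deterministic drift $-n\,h(f,f^\star)^2$ coming from $\mathbf{E}^\star[g(X_1)^2 - 1] = -h(f,f^\star)^2$ together with a Bernstein-type control on $\log g$ that brings in $\|\sqrt{f/f^\star}-1\|_2$. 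After this reduction the problem becomes: bound, uniformly over $q\ge q^\star$ and over the Hellinger shells $\{h(f,f^\star)\in[2^{-j-1},2^{-j}]\}$, the centered empirical process $\mathbb{G}_n(g) = \sum_{i}\{g(X_i) - \mathbf{E}^\star g(X_1)\}$ (or its $\log$-variant) against the drift term $-n\varepsilon^2$ and the penalty $\eta(q)(\log K(2n)\vee\log\log n)$.

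The core of the argument is a peeling/chaining estimate. First I would fix a dyadic scale $\varepsilon = 2^{-j}$ and use the hypothesis $\mathcal{N}(\mathcal{H}_q^n(\varepsilon),\delta) \le (K(n)\varepsilon/\delta)^{\eta(q)}$ to run Dudley's entropy integral, obtaining a bound on $\mathbf{E}^\star \sup_{g\in\mathcal{H}_q^n(\varepsilon)} |\mathbb{G}_n(g)|$ of order $\sqrt{\eta(q)}\,\varepsilon\,\sqrt{\log K(n)}$ plus a lower-order term; because the entropy integral $\int_0^\varepsilon \sqrt{\eta(q)\log(K(n)\varepsilon/\delta)}\,d\delta$ converges this is clean. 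The fluctuations around this mean are handled by Talagrand's concentration inequality for empirical processes (the bracketing version, or after a symmetrization step), which gives a sub-Gaussian-plus-sub-exponential tail with variance proxy $\sim n\varepsilon^2$ and range proxy controlled by the brackets. Combining the expectation bound with the concentration tail and optimizing over $j$ (the shell index), the $n\varepsilon^2$ drift dominates for $\varepsilon$ not too small, while for the smallest shells one uses that the process is uniformly bounded by $\eta(q)\cdot(\text{entropy term})$. The logarithm of the iterated logarithm enters exactly here: summing the tail bounds over a subsequence $n_k = 2^k$ (or a geometric blocking) and applying Borel–Cantelli requires the deviation threshold to be of size $\sqrt{n\varepsilon^2 \cdot \eta(q)(\log K(2n)\vee \log\log n)}$, and a standard maximal-inequality argument (Lévy/Ottaviani) fills in between consecutive blocks — this is why $K(2n)$ rather than $K(n)$ appears. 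The sum over $q\ge q^\star$ converges because $\eta(q)\ge q$, so a union bound over $q$ costs only a geometric series after the factor $1/\eta(q)$ is extracted.

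Two bookkeeping points deserve care. One: the supremum $\sup_{f\in\mathcal{M}_{q^\star}^n}\ell_n(f)$ being subtracted is at least $\ell_n(f^\star)$ (since $f^\star\in\mathcal{M}_{q^\star}$, modulo the closure and the sieve — here one uses that $f^\star\in\mathcal{M}_{q^\star}\setminus\cl\mathcal{M}_{q^\star-1}$ and $\mathcal{M}_{q^\star}=\bigcup_n\mathcal{M}_{q^\star}^n$, so $f^\star\in\mathcal{M}_{q^\star}^n$ for $n$ large, or one approximates), hence it suffices to bound $\sup_{f\in\mathcal{M}_q^n}\ell_n(f) - \ell_n(f^\star)$ from above; the essential-supremum convention of Remark~\ref{rem:esssup} makes all the suprema measurable. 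Two: the nesting $\mathcal{M}_q^n\subseteq\mathcal{M}_q^{n+1}$ and the monotonicity of $K(n)$, $\eta(q)$ are what let the geometric blocking argument go through uniformly.

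The main obstacle I expect is the interaction between the noncompact sieve (the $n$-dependence of $\mathcal{M}_q^n$, hence of the bracketing numbers through $K(n)$) and the almost-sure, pathwise nature of the conclusion: one cannot simply apply a fixed-$n$ empirical-process bound, but must control the whole trajectory, which forces the geometric subsequence argument together with a maximal inequality that is itself uniform over $q$ and over all Hellinger shells simultaneously. Getting the correct normalization $\log K(2n)\vee\log\log n$ — rather than a spurious extra logarithmic factor — out of this triple union bound (over $n_k$, over $q$, over dyadic shells $j$) is the delicate part, and is where the convergence of Dudley's integral and the precise form of Talagrand's inequality must be used rather than cruder tools.
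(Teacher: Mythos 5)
Your proposal follows essentially the same route as the paper's proof: the paper reduces the log-likelihood ratio to a centered empirical process minus a Hellinger drift (via van de Geer's concavity device $\bar f=(f+f^\star)/2$, which keeps the relevant functions bounded below), runs a peeling/bracketing-chaining deviation bound uniformly over the block $n\le k\le 2n$ by means of an Etemadi-type maximal inequality (exactly why $K(2n)$ appears), sums the resulting exponential tails over $q$ using $\eta(q)\ge q$, and concludes by geometric blocking and Borel--Cantelli. The only differences are the choice of tools (van de Geer's Bernstein-type bracketing bounds and Etemadi's inequality in place of your Talagrand and L\'evy/Ottaviani), so your outline is correct and matches the paper's argument.
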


The proof of Theorem \ref{thm:upperlil}
is given in Appendix \ref{sec:proofupperlil}.

The assumption of Theorem \ref{thm:upperlil} on the entropy of the 
Hellinger balls $\mathcal{H}_q^n(\varepsilon)$ states, roughly speaking, 
that the class of densities $\mathcal{M}_q^n$ endowed with the Hellinger 
distance has the same metric structure as a Euclidean ball of dimension 
$\eta(q)$ and radius of order $K(n)$, at least locally in a neighborhood 
of the true density $f^\star$.  The effective dimension $\eta(q)$ 
controls the fluctuations of the likelihood ratio statistic as a 
function of the model order, while the effective radius $K(n)$ controls 
the fluctuations as a function of time up to a minimal rate of order 
$\log\log n$.  In the following section we will see that the 
minimal $\log\log n$ rate is indeed optimal.

\begin{rem}
A bound on $\mathcal{N}(\mathcal{H}_q^n(\varepsilon),\delta)$ of the
form required by Theorem \ref{thm:upperlil} is easily obtained if
$\mathcal{M}_q^n$ are regularly parametrized classes.  That is, suppose
that we can write
$$
	\mathcal{M}_q^n = \{f_\theta : \theta\in\Theta_q^n\},\qquad
	\Theta_q^n\subset\mathbb{R}^{\eta(q)},
$$
where we have a pointwise Lipschitz estimate of the form
$$
	\textstyle{|\sqrt{f_\theta(x)/f^\star(x)}
	-\sqrt{f_{\theta'}(x)/f^\star(x)}|}
	\le F(x)\, \tnorm{\theta-\theta'}
$$
for some function $F$ in $L^2$ and norm $\tnorm{\cdot}$ on 
$\mathbb{R}^{\eta(q)}$, and
$$
	h(f_\theta,f^\star) \ge c\,\tnorm{\theta-\theta^\star}
$$
with $c>0$.   Then the requisite bound 
on $\mathcal{N}(\mathcal{H}_q^n(\varepsilon),\delta)$ follows easily
(cf.\ \cite[Example 19.7]{vdV98}).  This covers many cases of practical
interest.  However, geometrically complex models such as finite
mixtures do not admit a regular parametrization, while our results are
nonetheless sufficiently general to apply to such models
(section \ref{sec:mixorder}).  In non-regular models the entropy bound
required by Theorem \ref{thm:upperlil} is far from obvious, and the
requisite geometric analysis is of independent interest.  Such problems
are investigated by the authors in \cite{GasHanGeo}, and form the basis
for the results in section \ref{sec:mixorder} below.
\end{rem}

\subsection{Lower bound}
\label{sec:lowerlil}

Throughout this section, we specialize to the case that 
$\mathcal{M}_q^n=\mathcal{M}_q$ does not depend on $n$ (this implies
essentially that $\mathcal{M}_q$ is compact). In this setting, 
Theorem \ref{thm:upperlil} yields an upper bound of order $\log\log n$ 
on the pathwise fluctuations of the likelihood ratio statistic. The aim 
of this section is to obtain a matching lower bound of order $\log\log 
n$, which shows that the minimal rate in Theorem \ref{thm:upperlil} is 
essentially optimal. For the purposes of a lower bound uniformity 
in $q$ is irrelevant, so it suffices to restrict attention to some 
fixed $q>q^\star$.  We will in fact obtain a much stronger result in 
this case that completely characterizes the pathwise 
asymptotics of the likelihood ratio statistic for fixed $q$ in 
sufficiently smooth families.

The geometric structure required in the present section is somewhat 
different than that of Theorem \ref{thm:upperlil}.  Instead of Hellinger
balls, we consider the classes of weighted densities 
$\mathcal{D}_q = \{d_f:f\in\mathcal{M}_q,~f\ne f^\star\}$
and $\mathcal{D}=\bigcup_q\mathcal{D}_q$, where
$$
	d_f = \frac{\sqrt{f/f^\star}-1}{h(f,f^\star)},~~~
	f\in\mathcal{M},\quad f\ne f^\star.
$$
Define for $\varepsilon>0$ and $q\ge 1$ the
local weighted classes
\begin{align*}
	\mathcal{D}_q(\varepsilon)&=\{d_f:f\in\mathcal{M}_q,~
	0<h(f,f^\star)\le\varepsilon\}, \\
	\mathcal{\bar D}_q&=\bigcap_{\varepsilon>0}
	\cl\mathcal{D}_q(\varepsilon),
\end{align*}
where the closure $\cl\mathcal{D}_q(\varepsilon)$ is in $L^2(f^\star d\mu)$.
Clearly $\mathcal{\bar D}_q$ is the set of all possible limit points of
$d_f$ as $h(f,f^\star)\to 0$ in $\mathcal{M}_q$.  If the neighborhoods of
$\mathcal{\bar D}_q$ are sufficiently rich, such limits can be taken along
a continuous path in the following sense.

\begin{defn}
A point $d\in\mathcal{\bar D}_q$ is called \emph{continuously accessible} 
if there is a path 
$(f_t)_{t\in\mbox{}]0,1]}\subset\mathcal{M}_q\backslash\{f^\star\}$ such 
that the map $t\mapsto h(f_t,f^\star)$ is continuous, $h(f_t,f^\star)\to 
0$ as $t\to 0$, and $d_{f_t}\to d$ in $L^2(f^\star d\mu)$ as $t\to 0$.
The subset of all continuously accessible points in $\mathcal{\bar D}_q$ 
is denoted as $\mathcal{\bar D}_q^c$.
\end{defn}

We can now formulate the main result of this section.

\begin{thm}
\label{thm:lowerlil}
Let $q^\star\le p<q$.  Assume that
$$
        \int_0^1 \sqrt{\log\mathcal{N}(\mathcal{D}_q,u)}\,du<\infty,
$$
and that $|d|\le D$ for all $d\in\mathcal{D}_q$ with
$D\in L^{2+\alpha}(f^\star d\mu)$ for some $\alpha>0$.  Then
\begin{multline*}
        \limsup_{n\to\infty}\frac{1}{\log\log n}\left\{
        \sup_{f\in\mathcal{M}_q}\ell_n(f)-
        \sup_{f\in\mathcal{M}_p}\ell_n(f)\right\} \ge \mbox{}\\
        \sup_{g\in L^2_0(f^\star d\mu)}\left\{
        \sup_{f\in\mathcal{\bar D}_q^c}(\langle f,g\rangle)_+^2
        -\sup_{f\in\mathcal{\bar D}_p}(\langle f,g\rangle)_+^2
        \right\}\quad
        \mathbf{P}^\star\mbox{\rm-a.s.},
\end{multline*}
as well as
\begin{multline*}
        \limsup_{n\to\infty}\frac{1}{\log\log n}\left\{
        \sup_{f\in\mathcal{M}_q}\ell_n(f)-
        \sup_{f\in\mathcal{M}_p}\ell_n(f)\right\} \le \mbox{}\\
        \sup_{g\in L^2_0(f^\star d\mu)}\left\{
        \sup_{f\in\mathcal{\bar D}_q}(\langle f,g\rangle)_+^2
        -\sup_{f\in\mathcal{\bar D}_p^c}(\langle f,g\rangle)_+^2
        \right\}\quad
        \mathbf{P}^\star\mbox{\rm-a.s.},
\end{multline*}
where $L^2_0(f^\star d\mu) = \{g\in L^2(f^\star d\mu):\|g\|_2\le 1,~
\langle 1,g\rangle=0\}$.  
\end{thm}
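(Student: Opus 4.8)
The plan is to express the likelihood ratio statistic in terms of the centered empirical process $\nu_n(g)=n^{-1/2}\sum_{i=1}^n\{g(X_i)-\mathbf{E}^\star g(X_i)\}$ indexed by the weighted densities $\mathcal{D}$, and then to read off both bounds from a uniform law of the iterated logarithm for $\nu_n$ over $\mathcal{D}$. Since $f^\star\in\mathcal{M}_{q^\star}\subseteq\mathcal{M}_p\subseteq\mathcal{M}_q$, subtracting $\ell_n(f^\star)$ from both suprema we may work throughout with $\ell_n(f)-\ell_n(f^\star)$. Write $s_f=\sqrt{f/f^\star}=1+h\,d_f$ with $h:=h(f,f^\star)$; one computes directly from the definition of $h$ that $\|d_f\|_2=1$ and $\langle 1,d_f\rangle=\mathbf{E}^\star d_f=-h/2$, so that every limit point $d\in\mathcal{\bar D}_q$ has $\|d\|_2=1$ and $\langle 1,d\rangle=0$. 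Since $\log u\le u-1$,
$$
\ell_n(f)-\ell_n(f^\star)=2\sum_{i=1}^n\log s_f(X_i)\le 2h\sum_{i=1}^n d_f(X_i)=-nh^2+2h\sqrt n\,\nu_n(d_f),
$$
and since the entropy hypothesis and the envelope bound give the crude estimate $\sup_{d\in\mathcal{D}_q}\nu_n(d)=O(\sqrt{\log\log n})$ a.s., it follows that $\ell_n(f)-\ell_n(f^\star)$ is eventually negative, uniformly over $f$, once $h\ge\epsilon_n:=C\sqrt{\log\log n/n}$ with $C$ large, while it vanishes at $f=f^\star$. Hence all suprema may be restricted to the shrinking Hellinger ball $\{h(f,f^\star)\le\epsilon_n\}$ without affecting the a.s.\ asymptotics; in particular only the germ of $\mathcal{D}_q$ at $f^\star$, namely $\mathcal{\bar D}_q$, will survive in the limit.

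On this localized ball one performs the sharp second-order expansion $2\log(1+x)=2x-x^2+O(|x|^3)$ at $x=h\,d_f(X_i)$, disposes of the (rare, at scale $\epsilon_n$) indices with $h|d_f(X_i)|$ not small using $|d_f|\le D\in L^{2+\alpha}$, and uses $\|d_f\|_2=1$ together with a uniform law of large numbers for $n^{-1}\sum_i d_f(X_i)^2$ over $\mathcal{D}_q$, to obtain uniformly over $f$ in the ball
$$
\ell_n(f)-\ell_n(f^\star)=-2n\,h(f,f^\star)^2+2h(f,f^\star)\sqrt n\,\nu_n(d_f)+o(\log\log n).
$$
Maximizing the quadratic in $h$, the right-hand side is at most $\tfrac12(\nu_n(d_f))_+^2+o(\log\log n)$, with optimum at $h\approx\nu_n(d_f)/2\sqrt n$. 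This is where continuous accessibility is needed: to \emph{attain} this value along a prescribed direction $d\in\mathcal{\bar D}_q^c$ one follows a path $t\mapsto f_t$ with $t\mapsto h(f_t,f^\star)$ continuous, $h(f_t,f^\star)\to0$ and $d_{f_t}\to d$, and uses the intermediate value theorem to pick $t=t_n$ with $h(f_{t_n},f^\star)$ equal to the small target $\nu_n(d)/2\sqrt n$.

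The third ingredient is the uniform LIL: under $\int_0^1\sqrt{\log\mathcal{N}(\mathcal{D}_q,u)}\,du<\infty$ and $D\in L^{2+\alpha}$, the sequence $\{\nu_n/\sqrt{2\log\log n}\}_n$ is relatively compact in $\ell^\infty(\mathcal{D}_q)$ with cluster set equal to the unit ball $\{d\mapsto\langle d,g\rangle:g\in L^2_0(f^\star d\mu)\}$ of the reproducing kernel Hilbert space of the $f^\star$-Brownian bridge; the identity $\langle 1,d_f\rangle=-h(f,f^\star)/2$ is precisely what makes the bridge covariance collapse to $\langle d,d'\rangle$ on the relevant germ, producing the space $L^2_0$. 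Feeding the expansion above into this, and using the localization, gives $\sup_{f\in\mathcal{M}_q}\{\ell_n(f)-\ell_n(f^\star)\}\le\tfrac12\sup_{d\in\mathcal{D}_q(\epsilon_n)}(\nu_n(d))_+^2+o(\log\log n)$, hence
$$
\limsup_{n\to\infty}\frac{1}{\log\log n}\sup_{f\in\mathcal{M}_q}\{\ell_n(f)-\ell_n(f^\star)\}\le\sup_{g\in L^2_0}\sup_{d\in\mathcal{\bar D}_q}(\langle d,g\rangle)_+^2,
$$
the factor $\tfrac12$ being cancelled against the factor $2$ that relates $(\nu_n(d))_+^2/\log\log n$ to the square of the RKHS-ball radius, as $\mathcal{D}_q(\epsilon_n)$ shrinks onto $\mathcal{\bar D}_q$. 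For the subtracted supremum over $\mathcal{M}_p$ one needs a matching \emph{lower} bound, $\tfrac12\sup_{d\in\mathcal{\bar D}_p^c}(\nu_n(d))_+^2-o(\log\log n)$, furnished by the path construction of the previous paragraph (hence $\mathcal{\bar D}_p^c$ rather than $\mathcal{\bar D}_p$). This asymmetry between bounding a supremum from above, where no accessibility is needed and the full limiting germ appears, and from below, where accessibility is needed and only $\mathcal{\bar D}^c$ appears, is exactly the difference between the two displayed inequalities of the theorem. The lower bound on the $\limsup$ of the difference is then symmetric: fix a near-optimal $g^\star\in L^2_0$, extract a subsequence $n_k$ along which $\nu_{n_k}/\sqrt{2\log\log n_k}\to g^\star$ in $\ell^\infty(\mathcal{D})$, lower-bound $\sup_{\mathcal{M}_q}$ along it via continuous accessibility (giving $\mathcal{\bar D}_q^c$) and upper-bound $\sup_{\mathcal{M}_p}$ along it via the crude inequality (giving $\mathcal{\bar D}_p$).

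I expect the principal obstacle to be the uniform remainder estimate in the second step. With only a $(2+\alpha)$-th moment on $D$ one cannot simply bound the cubic term by $D^3$; instead one truncates $D$ at a level tuned to the scale $h\asymp\sqrt{\log\log n/n}$, exploiting both the localization $h\le\epsilon_n$ and the fact that at this scale the indices with $D(X_i)$ large are too few to contribute. A secondary difficulty is the simultaneous treatment of the two suprema: one must run the subsequence argument along a single sequence that is LIL-extremal for $\nu_{n_k}$ uniformly over all of $\mathcal{D}=\mathcal{D}_q\cup\mathcal{D}_p$, and interchange this subsequential limit with the suprema over directions and over $g\in L^2_0$, which is where relative compactness of $\{\nu_n/\sqrt{2\log\log n}\}$ in $\ell^\infty(\mathcal{D})$ --- i.e.\ the global entropy hypothesis --- is essential.
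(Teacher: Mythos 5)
Your proposal is correct and follows essentially the same route as the paper: the crude inequality $\ell_n(f)-\ell_n(f^\star)\le 2h\sqrt{n}\,\nu_n(d_f)-nh^2$ for localization to a Hellinger ball of radius $O(\sqrt{\log\log n/n})$, the second-order expansion with remainder controlled via the $L^{2+\alpha}$ envelope (the paper uses $n^{-1/(2+\alpha)}\max_{i\le n}D(X_i)\to 0$ in place of your truncation, to the same effect), the compact LIL with cluster set $\{\langle\cdot,g\rangle:g\in L^2_0\}$, and continuous accessibility plus the intermediate value theorem to attain the optimal Hellinger radius $h\approx\nu_n(d)/2\sqrt{n}$ along a path, which is exactly the source of the $\mathcal{\bar D}^c$ versus $\mathcal{\bar D}$ asymmetry. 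The normalization bookkeeping ($\tfrac12(\nu_n(d))_+^2/\log\log n=(I_n(d))_+^2$) and the final subsequence extraction along LIL-extremal sequences also match the paper's Propositions on the asymptotic expansion and its two one-sided refinements.
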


Only the first (lower bound) part of the theorem is needed
to conclude optimality of the minimal $\log\log n$ rate in
Theorem \ref{thm:upperlil}.  Indeed, we will obtain as a corollary
the following lower bound counterpart to Theorem \ref{thm:upperlil}.

\begin{cor}
\label{cor:lowerlil}
Suppose there exists $q>q^\star$ such that the following hold.
\begin{enumerate}
\item
There is an envelope function $D:E\to\mathbb{R}$ such that
$|d|\le D$ for all $d\in\mathcal{D}_q$ and
$D\in L^{2+\alpha}(f^\star d\mu)$ for some $\alpha>0$.  Moreover,
$
        \int_0^1 \sqrt{\log\mathcal{N}(\mathcal{D}_q,u)}\,du<\infty.
$
\item $\mathcal{\bar D}_q^c\backslash\mathcal{\bar D}_{q^\star}$
is nonempty.
\end{enumerate}
Let $\eta(q)>0$ be an arbitrary positive function.  Then
$$
	\limsup_{n\to\infty}\frac{1}{\log\log n}
	\sup_{q\ge q^\star}\frac{1}{\eta(q)}
        \left\{
        \sup_{f\in\mathcal{M}_q}\ell_n(f) -
        \sup_{f\in\mathcal{M}_{q^\star}}\ell_n(f)
        \right\} \ge C
$$
$\mathbf{P}^\star$-a.s., where $C>0$ is nonrandom but may depend on
$f^\star,\eta$.
\end{cor}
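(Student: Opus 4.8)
The plan is to read off the corollary from the lower-bound half of Theorem~\ref{thm:lowerlil} applied with $p=q^\star$. Fix a $q>q^\star$ satisfying the hypotheses; then $q^\star\le q^\star<q$ and hypothesis~(1) is precisely what Theorem~\ref{thm:lowerlil} demands of $\mathcal{D}_q$, so that theorem yields
\begin{multline*}
        \limsup_{n\to\infty}\frac{1}{\log\log n}\left\{
        \sup_{f\in\mathcal{M}_q}\ell_n(f)-
        \sup_{f\in\mathcal{M}_{q^\star}}\ell_n(f)\right\}\ge\\
        \sup_{g\in L^2_0(f^\star d\mu)}\left\{
        \sup_{d\in\mathcal{\bar D}_q^c}(\langle d,g\rangle)_+^2
        -\sup_{d\in\mathcal{\bar D}_{q^\star}}(\langle d,g\rangle)_+^2
        \right\}\quad\mathbf{P}^\star\mbox{-a.s.}
\end{multline*}
It therefore suffices to show that the right-hand side is at least some strictly positive constant $c_0$ depending only on $f^\star$, and then to pass to the supremum over $q\ge q^\star$.

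To bound the right-hand side below I would use hypothesis~(2). First I record two normalization facts: every $d_f$ satisfies $\|d_f\|_2=1$ (since $h(f,f^\star)=\|\sqrt{f/f^\star}-1\|_2$) and $\langle 1,d_f\rangle=-\tfrac12 h(f,f^\star)$, so $\langle 1,d_f\rangle\to 0$ as $h(f,f^\star)\to 0$; because every element of $\mathcal{\bar D}_q$ is an $L^2(f^\star d\mu)$-limit of such $d_f$ with $h(f,f^\star)\to 0$, all elements of $\mathcal{\bar D}_q$ (hence of $\mathcal{\bar D}_q^c$ and of $\mathcal{\bar D}_{q^\star}$) have unit norm and are orthogonal to $1$, i.e.\ they lie in $L^2_0(f^\star d\mu)$. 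Now choose $d_0\in\mathcal{\bar D}_q^c\setminus\mathcal{\bar D}_{q^\star}$, which is nonempty by hypothesis~(2); by the above $d_0$ is an admissible choice of $g$. With $g=d_0$ the first inner supremum is at least $(\langle d_0,d_0\rangle)_+^2=1$, while for the second, Cauchy--Schwarz gives $\langle d,d_0\rangle\le\|d\|_2\|d_0\|_2=1$ for all $d\in\mathcal{\bar D}_{q^\star}$, and I claim the bound is \emph{strict} in the supremum: if $\rho:=\sup_{d\in\mathcal{\bar D}_{q^\star}}\langle d,d_0\rangle$ equalled $1$, a maximizing sequence $d_k$ would satisfy $\|d_k-d_0\|_2^2=2-2\langle d_k,d_0\rangle\to 0$, placing $d_0$ in the $L^2$-closure of $\mathcal{\bar D}_{q^\star}$; but $\mathcal{\bar D}_{q^\star}$ is closed by its very definition, so $d_0\in\mathcal{\bar D}_{q^\star}$, contradicting the choice of $d_0$. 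Since $x\mapsto(x)_+^2$ is continuous and nondecreasing, $\sup_{d\in\mathcal{\bar D}_{q^\star}}(\langle d,d_0\rangle)_+^2=(\rho)_+^2<1$, so the right-hand side of the displayed inequality is at least $c_0:=1-(\rho)_+^2>0$.

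Combining the two steps gives $\limsup_{n\to\infty}(\log\log n)^{-1}\{\sup_{f\in\mathcal{M}_q}\ell_n(f)-\sup_{f\in\mathcal{M}_{q^\star}}\ell_n(f)\}\ge c_0$ $\mathbf{P}^\star$-a.s. Since for every $n$ we have $\sup_{q'\ge q^\star}\eta(q')^{-1}\{\sup_{f\in\mathcal{M}_{q'}}\ell_n(f)-\sup_{f\in\mathcal{M}_{q^\star}}\ell_n(f)\}\ge\eta(q)^{-1}\{\sup_{f\in\mathcal{M}_q}\ell_n(f)-\sup_{f\in\mathcal{M}_{q^\star}}\ell_n(f)\}$, dividing by $\log\log n$ and taking $\limsup$ yields the corollary with $C=c_0/\eta(q)>0$; this constant is nonrandom and depends only on $f^\star$ (through $d_0$ and $\rho$) and on $\eta$ (through $\eta(q)$). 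I do not expect a serious obstacle here: the corollary is essentially a repackaging of Theorem~\ref{thm:lowerlil}. The only point needing care is the strict inequality $\rho<1$, which says that $f^\star$ cannot be approached in Hellinger direction by $\mathcal{M}_{q^\star}$ along a limiting direction $d_0$ that is continuously accessible from $\mathcal{M}_q$ but not attainable as a limiting direction within $\mathcal{M}_{q^\star}$; this is exactly where hypothesis~(2), the closedness built into $\mathcal{\bar D}_{q^\star}$, and the unit normalization of the $d_f$ are used.
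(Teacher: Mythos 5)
Your proof is correct and takes essentially the same route as the paper's: apply the lower-bound half of Theorem~\ref{thm:lowerlil} with $p=q^\star$, verify $\mathcal{\bar D}_q\subset L^2_0(f^\star d\mu)$ via $\langle 1,d_f\rangle=-\tfrac12 h(f,f^\star)$ and the unit normalization, then test with $g=d_0\in\mathcal{\bar D}_q^c\setminus\mathcal{\bar D}_{q^\star}$ to make the variational expression strictly positive. Your maximizing-sequence argument for $\rho<1$ (using closedness of $\mathcal{\bar D}_{q^\star}$) is a correct elaboration of a step the paper dispatches with ``it follows directly.''
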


The proofs of Theorem \ref{thm:lowerlil} and Corollary
\ref{cor:lowerlil} are given in Appendix \ref{sec:prooflowerlil} below.

The fact that the geometric assumptions in Theorem \ref{thm:lowerlil} 
and Corollary \ref{cor:lowerlil} are expressed in terms of weighted 
classes is not surprising, as the sharp asymptotic expression provided 
by Theorem \ref{thm:lowerlil} for the pathwise fluctuations of the 
likelihood are expressed in terms of a variational problem on the 
weighted classes.  Nonetheless, we are naturally led to ask whether 
there is any relation between the geometric assumptions imposed in the 
upper bound Theorem \ref{thm:upperlil} and the lower bound Theorem 
\ref{thm:lowerlil}, which appear to be quite different at first sight. 
In \cite{GasHanGeo}, we show that the global entropy of the weighted 
class is closely related to local entropy, so that the geometric 
assumptions for the upper and lower bounds are not too far apart.

\begin{rem}
\label{rem:dimensionfree}
When $\mathcal{\bar D}_q$ and $\mathcal{\bar D}_p$ each contain an 
$L^2(f^\star d\mu)$-dense subset of continuously accessible points (which 
is typically the case in sufficiently smooth models), then Theorem 
\ref{thm:lowerlil} provides the exact characterization
\begin{multline*}
        \limsup_{n\to\infty}\frac{1}{\log\log n}\left\{
        \sup_{f\in\mathcal{M}_q}\ell_n(f)-
        \sup_{f\in\mathcal{M}_p}\ell_n(f)\right\} = \mbox{}\\
        \sup_{g\in L^2_0(f^\star d\mu)}\left\{
        \sup_{f\in\mathcal{\bar D}_q}(\langle f,g\rangle)_+^2
        -\sup_{f\in\mathcal{\bar D}_p}(\langle f,g\rangle)_+^2
        \right\}\quad
        \mathbf{P}^\star\mbox{\rm-a.s.}
\end{multline*}
Beside its intrinsic interest, this result has a surprising consequence.  
In the case that $\mathcal{M}_q$ and $\mathcal{M}_p$ are regular 
parametric models with $\dim(\mathcal{M}_q)> \dim(\mathcal{M}_p)$, one 
can choose $g\in\mathcal{\bar D}_q$ which is orthogonal to 
$\mathcal{\bar D}_p$.  As $\mathcal{\bar D}_q,\mathcal{\bar 
D}_p\subseteq L^2_0(f^\star d\mu)$ (see the proof of Corollary 
\ref{cor:lowerlil}), it follows easily that in this case the right-hand 
side of the previous equation display is precisely equal to $1$.  In 
particular, we obtain the curious conclusion that in regular parametric 
models, the magnitude of the fluctuations of the likelihood ratio 
statistic does not depend on the dimensions $\dim(\mathcal{M}_q)$ and 
$\dim(\mathcal{M}_p)$.  In contrast, it is well known that in regular 
parametric models, the likelihood ratio statistic itself converges 
weakly to a chi-square distribution with 
$\dim(\mathcal{M}_q)-\dim(\mathcal{M}_p)$ degrees of freedom, so the 
tails of the distribution of the likelihood ratio statistic do in fact 
depend strongly on the dimensions $\dim(\mathcal{M}_q)$ and 
$\dim(\mathcal{M}_p)$.  Of course, the dimension independence of the
pathwise fluctuations will also cease to hold if we are interested in
a result that is uniform in the order $q$, as in Theorem \ref{thm:upperlil}.
This highlights the fact that the problems investigated in this paper
are fundamentally different depending on whether or not one assumes a
prior upper bound on the model order.
\end{rem}

\section{Strongly consistent order estimation}
\label{sec:consist}

The goal of this section is to apply the results of section 
\ref{sec:pathwise} to identify what penalties and cutoffs yield strongly 
consistent order estimators.  We first develop some general consistency 
and inconsistency results, and then consider specifically the 
challenging problem of mixture order estimation.

\subsection{Consistency and minimal penalties}

In this section we consider the general setting introduced in section 
\ref{sec:setting}.  We now suppose, however, that the true model order 
$q^\star$ (as well as the true density $f^\star$) is not known, so that 
we must estimate $q^\star$ from an observation sequence 
$(X_k)_{k\ge 1}$.
To this end, define the \emph{penalized likelihood order estimator}
$$
        \hat q_n = \argmax_{q\ge 1}\left\{
        \sup_{f\in\mathcal{M}_q^n}\ell_n(f) - \pen(n,q)\right\},
$$
where $\pen(n,q)$ is a penalty function.  Our goal is to show that the 
penalized likelihood order estimator is strongly consistent, that is, 
$\hat q_n\to q^\star$ as $n\to\infty$ $\mathbf{P}^\star$-a.s., for a 
suitable choice of the penalty (that does not depend on $q^\star$ or 
$f^\star$).  Let us emphasize that the maximum in the definition of 
$\hat q_n$ is taken over \emph{all} model orders $q\ge 1$, that 
is, we do not assume that an a priori upper bound on the order is 
available, in contrast to most previous work on this topic.

We obtain the following general result.

\begin{thm}
\label{thm:mainconsist}
Suppose that for all $n$ sufficiently large
$$
        \mathcal{N}(\mathcal{H}_q^n(\varepsilon),\delta)
        \le
        \left(\frac{K(n)\varepsilon}{\delta}\right)^{\eta(q)}
$$
for all $q\ge q^\star$ and $\delta\le\varepsilon$,
where $K(n)\ge 1$ and $\eta(q)\ge q$ are increasing functions
and we assume that $\log K(n)=o(n)$.  Let $\pen(n,q)$ be a penalty 
that is increasing in $q$ and
\begin{align*}
   &     \lim_{n\to\infty}\sup_{q>q^\star}
        \frac{\eta(q)\{\log K(2n)\vee \log\log n\}}{\pen(n,q)-
        \pen(n,q^\star)}=0, \\
   &     \lim_{n\to\infty}\max_{q<q^\star}\frac{\pen(n,q)}{n}=0.
\end{align*}
Then $\hat q_n\to q^\star$ as $n\to\infty$ $\mathbf{P}^\star$-a.s.
\end{thm}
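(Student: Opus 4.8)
The plan is to decompose the consistency claim into an overestimation bound and an underestimation bound, handled separately.

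First I would treat overestimation, i.e. show that $\mathbf{P}^\star$-a.s.\ $\hat q_n \le q^\star$ for all $n$ large. The event $\{\hat q_n = q\}$ for $q>q^\star$ forces
$$
\sup_{f\in\mathcal{M}_q^n}\ell_n(f) - \sup_{f\in\mathcal{M}_{q^\star}^n}\ell_n(f) \ge \pen(n,q)-\pen(n,q^\star),
$$
since $\mathcal{M}_{q^\star}^n\subseteq\mathcal{M}_q^n$ and the penalty is increasing in $q$. By Theorem~\ref{thm:upperlil}, the left side is bounded for all large $n$ (uniformly in $q>q^\star$, after dividing by $\eta(q)$) by $2C\eta(q)\{\log K(2n)\vee\log\log n\}$. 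The first displayed hypothesis on the penalty says exactly that $\pen(n,q)-\pen(n,q^\star)$ dominates $\eta(q)\{\log K(2n)\vee\log\log n\}$ uniformly in $q>q^\star$, so for $n$ large the overestimation event is empty. I would be slightly careful to state the uniformity correctly: Theorem~\ref{thm:upperlil} gives a single a.s.\ event on which the $\limsup$ of the normalized sup over $q\ge q^\star$ is $\le C$, which is precisely what is needed.

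Next I would treat underestimation, i.e. show that $\mathbf{P}^\star$-a.s.\ $\hat q_n \ge q^\star$ for all $n$ large. Fix $q<q^\star$; the event $\{\hat q_n = q\}$ forces
$$
\sup_{f\in\mathcal{M}_{q^\star}^n}\ell_n(f) - \sup_{f\in\mathcal{M}_q^n}\ell_n(f) \le \pen(n,q^\star)-\pen(n,q) \le \pen(n,q^\star).
$$
Lower-bound the left side by $\ell_n(f^\star) - \sup_{f\in\mathcal{M}_q^n}\ell_n(f)$ (valid when $f^\star\in\mathcal{M}_{q^\star}^n$, which holds for $n$ large since $f^\star\in\mathcal{M}_{q^\star}=\bigcup_n\mathcal{M}_{q^\star}^n$). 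Then for any $f\in\mathcal{M}_q^n\subseteq\mathcal{M}_{q^\star-1}$ one has $\ell_n(f^\star)-\ell_n(f) = \sum_{i=1}^n\log(f^\star(X_i)/f(X_i))$, whose expectation under $\mathbf{P}^\star$ is $n\,K(f^\star\|f)$, the Kullback–Leibler divergence. The key point is that $f^\star\notin\cl\mathcal{M}_{q^\star-1}$ forces a strictly positive "information distance" from $f^\star$ to $\mathcal{M}_{q^\star-1}$; using the standard relation between KL divergence and Hellinger (or an $L^1$) separation and the fact that $f^\star$ lies outside the $L^1$-closure, I expect to extract a deterministic constant $c>0$ with $\sup_{f\in\mathcal{M}_q^n}\{\ell_n(f^\star)-\ell_n(f)\} \ge cn$ eventually a.s., via a uniform law of large numbers over $\mathcal{M}_q^n$. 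Since $\pen(n,q^\star)=o(n)$ by the second penalty hypothesis, and there are only finitely many $q<q^\star$ to handle, the underestimation event is eventually empty a.s.

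The main obstacle is the underestimation step: turning $f^\star\notin\cl\mathcal{M}_{q^\star-1}$ into a genuine a.s.\ linear-in-$n$ lower bound on $\ell_n(f^\star)-\sup_{f\in\mathcal{M}_q^n}\ell_n(f)$. The subtlety is that $\mathcal{M}_q^n$ grows with $n$ (through the sieve), and $\log\,f$ is generally unbounded, so a naive uniform LLN over a fixed class is not directly available; one needs either a truncation argument on $\log f$ together with the $\log K(n)=o(n)$ hypothesis controlling the growth of the sieve, or a Wald-type argument exploiting that the divergence stays bounded away from zero as the sieve expands. I would handle this by a bracketing/covering argument at a fixed resolution, reducing to finitely many densities up to error $o(n)$, then applying the SLLN to each and using that the minimal KL divergence over the (closed) sieve limit is strictly positive. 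The rest of the proof is routine bookkeeping with the two penalty conditions.
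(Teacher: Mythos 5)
Your overall architecture coincides with the paper's: overestimation is ruled out by combining Theorem \ref{thm:upperlil} with the first penalty condition, and underestimation by a uniform law of large numbers giving a linear-in-$n$ likelihood gap that dominates $\pen(n,q^\star)=o(n)$. The overestimation half of your argument is complete and correct as written.

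The underestimation half, however, has a genuine gap where you yourself flag ``the main obstacle.'' Your proposed resolution --- bracketing at a fixed resolution to reduce to \emph{finitely many} densities and then applying the SLLN to each --- does not go through, because the hypothesis only bounds $\mathcal{N}(\mathcal{H}_q^n(\varepsilon),\delta)$ by $(K(n)\varepsilon/\delta)^{\eta(q)}$ with $K(n)$ possibly tending to infinity; at any fixed resolution the number of brackets for the sieve $\mathcal{M}_q^n$ grows with $n$, so there is no fixed finite family of densities to which the SLLN can be applied, and the union $\bigcup_n\mathcal{M}_q^n$ may have infinite bracketing entropy. What is actually needed is a quantitative deviation inequality for $\sup_{f\in\mathcal{M}_q^n} n^{-1}\sum_j\log(f(X_j)/f^\star(X_j))$ at each $n$, with an error term scaling like $\sqrt{\eta(q^\star)\log K(n)/n}$, followed by Borel--Cantelli; this is precisely where the hypothesis $\log K(n)=o(n)$ enters (the paper's Proposition \ref{prop:liklln}, via \cite[Theorem 5.11]{vdG00}). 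A second missing ingredient is how to cope with the unboundedness and possible non-integrability of $\log(f/f^\star)$: you mention truncation, but the clean device is the convexification $\bar f=(f+f^\star)/2$, which gives $\log(f/f^\star)\le 2\log(\bar f/f^\star)$ with $\log\sqrt{\bar f/f^\star}$ bounded below and controlled by the same Hellinger brackets, and yields the deterministic drift $-2D(f^\star\|\bar f)$; Pinsker's inequality together with $f^\star\notin\cl\mathcal{M}_{q^\star-1}$ then gives $\inf_f D(f^\star\|\bar f)>0$, which is the strictly positive constant $c$ you need. Your appeal to ``the standard relation between KL and $L^1$ separation'' is the right idea for the last step, but without the convexification and the $n$-dependent entropy bound the uniform LLN you invoke is not available.
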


Theorem \ref{thm:mainconsist} is proved in Appendix \ref{sec:proofmainconsist}.

Let us now specialize to the case that $\mathcal{M}_q^n=\mathcal{M}_q$
does not depend on $n$, as in section \ref{sec:lowerlil}.  In this case,
Theorem \ref{thm:mainconsist} immediately yields the following corollary.

\begin{cor}
\label{cor:mainconsist}
Suppose that 
for all $q\ge q^\star$ and $\delta\le\varepsilon$
$$
        \mathcal{N}(\mathcal{H}_q(\varepsilon),\delta)
        \le
        \left(\frac{K\varepsilon}{\delta}\right)^{\eta(q)},
$$
where $K\ge 1$ and $\eta(q)\ge q$ is a strictly increasing function.
Define the penalty
$$
	\pen(n,q) = \eta(q)\,\varpi(n),
$$
where $\varpi(n)$ is any function such that
$$
	\lim_{n\to\infty}\frac{\log\log n}{\varpi(n)}=0,
	\qquad\quad
	\lim_{n\to\infty}\frac{\varpi(n)}{n}=0.
$$
Then $\hat q_n\to q^\star$ as $n\to\infty$ $\mathbf{P}^\star$-a.s.
\end{cor}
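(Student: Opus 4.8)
The plan is to deduce Corollary \ref{cor:mainconsist} from Theorem \ref{thm:mainconsist} by taking $K(n)=K$ constant. With this choice the entropy hypothesis of Theorem \ref{thm:mainconsist} is exactly the assumed bound on $\mathcal{N}(\mathcal{H}_q(\varepsilon),\delta)$, the growth condition $\log K(n)=\log K=o(n)$ holds trivially since $\log K$ is bounded, and $\eta(q)\ge q$ is increasing by hypothesis. So everything reduces to checking that $\pen(n,q)=\eta(q)\,\varpi(n)$ is increasing in $q$ and satisfies the two limiting conditions. First I would note two elementary consequences of the assumptions on $\varpi$: since $\log\log n\to\infty$ and $\log\log n/\varpi(n)\to 0$, we have $\varpi(n)\to\infty$, hence $\varpi(n)>0$ for all large $n$, so that $\pen(n,q)=\eta(q)\varpi(n)$ is strictly increasing in $q$ for all large $n$ (as $\eta$ is strictly increasing). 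That supplies the required monotonicity.

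For the first limiting condition, observe that $\log K(2n)\vee\log\log n=\log\log n$ for all large $n$ (as $K(2n)=K$), and $\pen(n,q)-\pen(n,q^\star)=(\eta(q)-\eta(q^\star))\,\varpi(n)$, so for $q>q^\star$
$$
        \frac{\eta(q)\{\log K(2n)\vee\log\log n\}}{\pen(n,q)-\pen(n,q^\star)}
        = \frac{\eta(q)}{\eta(q)-\eta(q^\star)}\cdot\frac{\log\log n}{\varpi(n)}.
$$
The only genuinely non-automatic point is that $\sup_{q>q^\star}\eta(q)/(\eta(q)-\eta(q^\star))<\infty$; this is where strict (rather than mere) monotonicity of $\eta$ is used. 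Indeed, for $q>q^\star$ one has $\eta(q)-\eta(q^\star)\ge\eta(q^\star+1)-\eta(q^\star)>0$, whence
$$
        \frac{\eta(q)}{\eta(q)-\eta(q^\star)}
        = 1+\frac{\eta(q^\star)}{\eta(q)-\eta(q^\star)}
        \le 1+\frac{\eta(q^\star)}{\eta(q^\star+1)-\eta(q^\star)}
        =: C^\star<\infty,
$$
uniformly in $q$. Therefore the supremum over $q>q^\star$ of the displayed ratio is at most $C^\star\,\log\log n/\varpi(n)$, which tends to $0$ by hypothesis.

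For the second limiting condition: if $q^\star=1$ the maximum over $q<q^\star$ is vacuous, and otherwise $\max_{q<q^\star}\pen(n,q)/n=\eta(q^\star-1)\,\varpi(n)/n\to 0$ by the assumption $\varpi(n)/n\to 0$. All hypotheses of Theorem \ref{thm:mainconsist} are thus verified, and the conclusion $\hat q_n\to q^\star$ $\mathbf{P}^\star$-a.s.\ follows immediately. I do not expect any real obstacle in carrying this out; the one place requiring a moment's care is the uniform bound on $\eta(q)/(\eta(q)-\eta(q^\star))$, which rests on strict monotonicity of $\eta$, together with the observation that $\varpi(n)\to\infty$ to guarantee the penalty is increasing in $q$.
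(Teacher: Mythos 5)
Your proposal is correct and is exactly the derivation the paper intends: the paper states that Corollary \ref{cor:mainconsist} follows "immediately" from Theorem \ref{thm:mainconsist} with $K(n)=K$ constant, and you have simply filled in the routine verification, including the one point that genuinely uses strict monotonicity of $\eta$ (the uniform bound on $\eta(q)/(\eta(q)-\eta(q^\star))$ over $q>q^\star$). No gaps.
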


Corollary \ref{cor:mainconsist} states that, when $\mathcal{M}_q^n= 
\mathcal{M}_q$ does not depend on $n$, the penalized likelihood order 
estimator is strongly consistent provided the penalty grows faster than 
$\log\log n$ and slower than $n$.  Clearly the $\log\log n$ rate is the 
minimal one attainable by applying Theorem \ref{thm:mainconsist}. This 
raises the question whether the $\log\log n$ rate is indeed minimal, in 
the sense that smaller penalties yield inconsistent estimators. The 
following result shows that this is indeed the case, so that the 
result of Corollary \ref{cor:mainconsist} is essentially optimal.

\begin{cor}
\label{cor:inconsist}
Suppose there exists $q>q^\star$ such that 
\begin{enumerate}
\item
there is an envelope function $D:E\to\mathbb{R}$ such that
$|d|\le D$ for all $d\in\mathcal{D}_q$,
$D\in L^{2+\alpha}(f^\star d\mu)$ for some $\alpha>0$, and
$
        \int_0^1 \sqrt{\log\mathcal{N}(\mathcal{D}_q,u)}\,du<\infty
$;
\item $\mathcal{\bar D}_q^c\backslash\mathcal{\bar D}_{q^\star}$
is nonempty.
\end{enumerate}
Let $\eta(q)>0$ be any strictly increasing function, and let
$$
	\pen(n,q) = C\,\eta(q)\,\log\log n.
$$
If $C>0$ is sufficiently small,
$\hat q_n\ne q^\star$ infinitely often $\mathbf{P}^\star$-a.s.
\end{cor}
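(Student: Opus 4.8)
The plan is to show that when the penalty differences are too small, the order estimator overshoots $q^\star$ infinitely often. The key point is that Corollary~\ref{cor:lowerlil} already furnishes, under hypotheses (1)--(2), a constant $C_0>0$ such that
\[
	\limsup_{n\to\infty}\frac{1}{\log\log n}
	\sup_{q\ge q^\star}\frac{1}{\eta(q)}
        \left\{
        \sup_{f\in\mathcal{M}_q}\ell_n(f) -
        \sup_{f\in\mathcal{M}_{q^\star}}\ell_n(f)
        \right\} \ge C_0
	\qquad\mathbf{P}^\star\text{-a.s.}
\]
First I would unwind this $\limsup$: it means that $\mathbf{P}^\star$-a.s.\ there is a (random) increasing sequence $n_k\to\infty$ and a (random) $q_k\ge q^\star$ with
\[
	\sup_{f\in\mathcal{M}_{q_k}}\ell_{n_k}(f) -
	\sup_{f\in\mathcal{M}_{q^\star}}\ell_{n_k}(f)
	\;\ge\; \tfrac{C_0}{2}\,\eta(q_k)\,\log\log n_k
\]
for all $k$. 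We may assume $q_k>q^\star$ along this subsequence: indeed if $q_k=q^\star$ the left-hand side is $0$, contradicting positivity of the right-hand side for $n_k$ large, so only finitely many $q_k$ can equal $q^\star$ and we discard them.

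Next I would compare the penalized objective at $q_k$ against that at $q^\star$. With $\pen(n,q)=C\,\eta(q)\log\log n$ one has, along the subsequence,
\[
	\left\{\sup_{f\in\mathcal{M}_{q_k}}\ell_{n_k}(f)-\pen(n_k,q_k)\right\}
	-\left\{\sup_{f\in\mathcal{M}_{q^\star}}\ell_{n_k}(f)-\pen(n_k,q^\star)\right\}
	\;\ge\;\left(\tfrac{C_0}{2}-C\right)\eta(q_k)\,\log\log n_k
	\;-\;\left(-C\,\eta(q^\star)\log\log n_k\right)\!,
\]
wait---more carefully, the penalty difference is $C\{\eta(q_k)-\eta(q^\star)\}\log\log n_k$, so the above difference is at least $\{(C_0/2)\eta(q_k)-C\eta(q_k)+C\eta(q^\star)\}\log\log n_k\ge (C_0/2-C)\eta(q_k)\log\log n_k$ using $\eta(q^\star)\ge 0$. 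Hence if $C<C_0/2$ this difference is strictly positive for all $k$ with $n_k$ large, which forces $\sup_{f\in\mathcal{M}_{q^\star}}\ell_{n_k}(f)-\pen(n_k,q^\star)$ to \emph{not} be the maximizer, i.e.\ $\hat q_{n_k}\ne q^\star$. Since this happens for infinitely many $k$, we conclude $\hat q_n\ne q^\star$ infinitely often, $\mathbf{P}^\star$-a.s., with $C>0$ any constant smaller than $C_0/2$.

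The only subtlety---and the step I expect to require the most care---is the reduction from the $\limsup$ over the \emph{continuum} of indices $q\ge q^\star$ (with the supremum inside) to a single offending index $q_k$ at each time $n_k$: one must check that $\sup_{q\ge q^\star}\eta(q)^{-1}\{\cdots\}$ is actually attained, or at least nearly attained, by some $q_k$, and that $q_k$ can be taken to exceed $q^\star$. Attainment is immediate because the supremum over $q$ is of a function taking values in the discrete set $\{\eta(q)^{-1}(\cdots):q\ge q^\star\}$ and each term is finite a.s.\ (by the upper bound Theorem~\ref{thm:upperlil} applied with $\mathcal{M}_q^n=\mathcal{M}_q$, whose hypothesis is exactly that of Corollary~\ref{cor:mainconsist}, guaranteeing the $\limsup$ is finite and hence each single term grows at most like $\eta(q)\log\log n$); and ruling out $q_k=q^\star$ is handled as above. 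A minor additional remark: we should note that $\hat q_n$ is well-defined (the argmax exists) under these hypotheses, which follows from the same finiteness, but this is already implicit in the setup of Corollary~\ref{cor:mainconsist}.
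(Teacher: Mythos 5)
Your proof is correct, and it rests on the same underlying mechanism as the paper's: the $\log\log n$ lower bound on the pathwise fluctuations of the likelihood ratio statistic from Section \ref{sec:lowerlil}. The only real difference is the entry point. The paper applies Theorem \ref{thm:lowerlil} directly with $p=q^\star$ and the \emph{single} $q$ from the hypothesis, obtaining $\limsup_n \Delta_n(q,q^\star)/\log\log n\ge\Gamma>0$ for that fixed $q$ (with $\Gamma$ the variational quantity shown positive in the proof of Corollary \ref{cor:lowerlil}), and then compares penalties at $q$ and $q^\star$; this also yields the slightly stronger conclusion, recorded after the corollary, that the \emph{same} $q$ dominates $q^\star$ infinitely often. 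You instead start from the packaged uniform statement of Corollary \ref{cor:lowerlil}, which forces the witness-extraction step and produces only a random, possibly $n$-dependent offending index $q_k$. That step is fine as you execute it, but two of your side remarks are off: a supremum over a countable index set need not be attained (your "discreteness" argument is not valid as stated), though near-attainment is all you use; and you cannot invoke Theorem \ref{thm:upperlil} here, since Corollary \ref{cor:inconsist} does not assume the Hellinger-ball entropy bound $\mathcal{N}(\mathcal{H}_q(\varepsilon),\delta)\le(K\varepsilon/\delta)^{\eta(q)}$ — only the bracketing condition on the weighted class $\mathcal{D}_q$ for one $q$. Neither matters: even if the supremum over $q$ were infinite, a witness $q_k$ exceeding the threshold would exist a fortiori, and well-definedness of $\hat q_n$ is irrelevant because the conclusion only requires that $q^\star$ be strictly dominated. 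The penalty arithmetic, including the discard of $C\,\eta(q^\star)\log\log n_k\ge 0$, is correct.
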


The proof of Corollary \ref{cor:inconsist} is given in Appendix
\ref{sec:proofmainconsist}.  Let us note that the proof of Corollary 
\ref{cor:inconsist} actually shows that 
$\sup_{f\in\mathcal{M}_q}\ell_n(f)-\pen(n,q)>\sup_{f\in\mathcal{M}_{q^\star}} 
\ell_n(f)-\pen(n,q^\star)$ infinitely often $\mathbf{P}^\star$-a.s., so 
Corollary \ref{cor:inconsist} is not altered even if 
we were to impose a prior upper bound on the order.

In conclusion, we have shown that when $\mathcal{M}_q^n=\mathcal{M}_q$ 
does not depend on $n$, penalties growing faster than $\log\log n$ are 
consistent while the penalty $C\,\eta(q)\log\log n$ is inconsistent when 
the constant $C$ is sufficiently small.  From the proof of Theorem 
\ref{thm:mainconsist}, we can also see that the penalty 
$C\,\eta(q)\log\log n$ is consistent when $C$ is sufficiently large. 
However, the critical value of $C$ may depend on the unknown parameter 
$f^\star$, so that this \emph{minimal} penalty may not be implementable.  
On the other hand, assuming that $\eta(q)$ does not depend on $f^\star$ 
(as is typically the case), penalties satisfying the assumptions of 
Theorem \ref{thm:mainconsist} obviously do not depend on the unknown 
parameter $f^\star$ and therefore define admissible estimators.  When
 $\mathcal{M}_q^n$ depends on $n$, larger penalties may be 
required to ensure consistency, depending on the growth rate of $K(n)$.

\subsection{Location mixture order estimation}
\label{sec:mixorder}

We finally apply our general results to location mixture order 
estimation.  Throughout this section, let $E=\mathbb{R}^d$ and let $\mu$ 
be the Lebesgue measure on $\mathbb{R}^d$.  Fix a strictly positive 
probability density $f_0$ with respect to $\mu$, and define
$$
	\mathcal{M}_q^n = \left\{
	\sum_{i=1}^q\pi_if_{\theta_i}:\pi_i\ge 0,~\sum_{i=1}^q\pi_i=1,~
	\theta_i\in\Theta(n)\right\},
$$
where $f_\theta(x)=f_0(x-\theta)$ and
$\cdots\subseteq\Theta(n)\subseteq\Theta(n+1)\subseteq\cdots
\subset\mathbb{R}^d$ is an increasing family of bounded subsets of
$\mathbb{R}^d$.  We fix $f^\star\in\mathcal{M}$ throughout this section.
Let
\begin{align*}
        H_{0}(x)&=\sup_{\theta\in\Theta}f_{\theta}(x)/f^{\star}(x),\\
        H_{1}(x)&=\sup_{\theta\in\Theta}
        \max_{i=1,\ldots,d}| 
        \partial f_\theta(x) / \partial\theta^i|/f^{\star}(x),\\        H_{2}(x)&=\sup_{\theta\in\Theta}
        \max_{i,j=1,\ldots,d}| 
        \partial^{2} f_\theta(x) / \partial\theta^i\partial\theta^j|/f^{\star}(x),
        \\        H_{3}(x)&=
        \sup_{\theta\in\Theta}\max_{i,j,k=1,\ldots,d}| 
        \partial^{3} f_\theta(x) / \partial\theta^i\partial\theta^j\partial\theta^k|/f^{\star}(x)
\end{align*}
when $f_0$ is sufficiently differentiable, and let
\begin{aspta}
The following hold:
\begin{enumerate}
\item $f_0\in C^3$ and $f_0(x)$, $(\partial f_0/\partial\theta^i)(x)$ vanish as 
$\|x\|\to\infty$.
\item $H_{k}\in L^{4}(f^{\star}d\mu)$ for $k=0,1,2$ and $H_{3}\in L^{2}(f^{\star}d\mu)$.
\end{enumerate} 
\end{aspta}

In the following, we consider two separate cases.  The first case is 
that of a compact parameter set, where $\Theta(n)=\Theta$ does not 
depend on $n$. In this setting, we obtain a general result.  Then, we 
consider the noncompact case in the setting of Gaussian mixtures,
and illustrate how Theorem \ref{thm:mainconsist} can be used to obtain
consistency results in this case.
To be able to use Theorem \ref{thm:mainconsist}, we need suitable estimates
on the local entropy of mixtures. The following result is 
given in \cite{GasHanGeo}.

\begin{thm}
\label{cor:localmixtures}
Suppose that Assumption A holds.  Then if $\Theta(n)=\Theta$ is a bounded subset of $\mathbb{R}^d$ with diameter $2T$,
$$
        \mathcal{N}(\mathcal{H}_{q}^{n}(\varepsilon),\delta) \le
        \left(
        \frac{C_\Theta\,\varepsilon}{\delta}
        \right)^{18(d+1)q+1} 
$$
for all $q\ge q^\star$ and $\delta/\varepsilon\le 1$, where
$$
        C_\Theta = L^\star\, (T\vee 1)^{1/6} \,
        (\|H_0\|_4^4\vee\|H_1\|_4^4\vee\|H_2\|_4^4\vee\|H_3\|_2^2)^{5/4}
$$
and $L^\star$ is a constant that depends only on $d$, $q^\star$ and $f^\star$.
\end{thm}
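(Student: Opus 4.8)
The proof is given in full in the companion paper \cite{GasHanGeo}; here we sketch the strategy. Since $h(f,f^\star)=\|\sqrt{f/f^\star}-1\|_2$, an element of $\mathcal{H}_q^n(\varepsilon)$ is of the form $1+v$ with $v=\sqrt{f/f^\star}-1$ and $\|v\|_2\le\varepsilon$, so it suffices to bracket the family of increments $v$ by brackets of $L^2(f^\star d\mu)$-width $\delta$. The key difficulty is that the parametrization $(\pi_i,\theta_i)_{i\le q}\mapsto f=\sum_i\pi_if_{\theta_i}$ is badly \emph{non-regular} near $f^\star$: distinct components may coincide or carry vanishing weight, so $h(f,f^\star)$ is comparable only to a high power of the parameter increments, and a direct covering in $(\pi_i,\theta_i)$ is both wasteful and, more seriously, produces the non-localized rate $(T/\delta)$ rather than the \emph{localized} rate $(\varepsilon/\delta)$ that Theorem \ref{thm:mainconsist} requires. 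The plan is therefore to first re-express $v$ in a set of finite-dimensional "moment coordinates" in which the class genuinely looks like a Euclidean ball of radius $\sim\varepsilon$, and then to build brackets there.

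For the first step one writes $\sqrt{f/f^\star}-1=(f/f^\star-1)/(\sqrt{f/f^\star}+1)$, expands the square root, and Taylor-expands each translate $f_{\theta_i}$ to second order around a suitable reference point (a support point of $f^\star$ when $\theta_i$ is near it; components far from the support of $f^\star$ are handled separately, which is what makes the dimension count involve $q$ rather than $q^\star$). Collecting terms, $v$ becomes a linear combination of a finite family of fixed functions --- the ratios $f_\theta/f^\star$ and their first and second $\theta$-derivatives at the reference points, a family whose cardinality is of the order of the parameter dimension $(d+1)q$ --- with coefficients that are suitable moment combinations of the weights and centred locations (for instance $\sum_i\pi_i(\theta_i-\theta^\star)$ rather than the individual $\theta_i-\theta^\star$), plus a remainder. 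Because assembling $\sqrt{f/f^\star}$ from a second-order expansion of $f$ generates squares and products of the first-order terms, the remainder is dominated in $L^2(f^\star d\mu)$ by $H_3$ together with products such as $H_0^2$, $H_1^2$, $H_2^2$; this is exactly where the integrability $H_k\in L^4(f^\star d\mu)$ for $k=0,1,2$ and $H_3\in L^2(f^\star d\mu)$ from Assumption A is consumed. The point of passing to moment coordinates is that the constraint $h(f,f^\star)\le\varepsilon$ forces the coefficient vector to have norm of order $\varepsilon$, even when individual parameter increments are much larger.

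Given this embedding the brackets are produced in the standard manner (cf.\ \cite[Example 19.7]{vdV98}): place a grid of mesh of order $\delta/C_\Theta$ on the ball of radius $\sim\varepsilon$ in the finite-dimensional coefficient space, and use the pointwise Lipschitz estimates for the basis functions --- with moduli dominated by $H_1,H_2,H_3$, and with the diameter $2T$ of $\Theta$ entering here --- to turn each grid point into a pair of pointwise envelopes whose $L^2(f^\star d\mu)$-gap is at most $\delta$. The number of grid points is of order $(C_\Theta\varepsilon/\delta)^N$, where $N$ absorbs the weights, the locations, their first- and second-order expansion coefficients, the grouping of components against the support of $f^\star$, and one extra direction controlling the overall scale; carrying out the accounting gives $N=18(d+1)q+1$, and all dependence on $T$ and on the norms of the $H_k$ is collected into the single constant $C_\Theta$ (the powers $1/6$ and $5/4$ appearing there arising from optimizing the grid mesh against the remainder). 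For $\delta>\varepsilon$ a single bracket suffices, giving the stated bound.

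The main obstacle is the non-regular geometry already flagged above: the substantive content is the verification, carried out in \cite{GasHanGeo}, that the moment coordinates are the \emph{right} coordinates --- i.e.\ that the map from them into $L^2(f^\star d\mu)$ is bi-Lipschitz on the scale $\varepsilon$, uniformly over $f\in\mathcal{M}_q^n$ and over the position of the components in $\Theta$ --- together with the uniform control of the second-order remainder, for which the integrability hypotheses of Assumption A are precisely what is required. Once this local normal form is in hand, the bracketing count is routine.
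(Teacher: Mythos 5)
The paper gives no proof of this theorem: it is stated as a result imported from the companion paper \cite{GasHanGeo}, which is exactly how you treat it, so your approach coincides with the paper's. The strategy you sketch (second-order Taylor expansion around the true parameters, a localized reparametrization making the class look like a Euclidean ball of radius $\varepsilon$, remainder control via $H_0,\ldots,H_3$) is consistent with the fragments of \cite{GasHanGeo} invoked elsewhere in the appendices, but since no proof appears in this paper there is nothing further to compare it against.
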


\begin{example}[Gaussian mixtures]
\label{ex:gaussian}
Consider mixtures of standard Gaussian densities
$f_0(x)=(2\pi)^{-d/2}e^{-\|x\|^2/2}$, and let
$\Theta(T) = \{\theta\in\mathbb{R}^d:\|\theta\|\le T\}$.  Fix a nondegenerate
mixture $f^\star$, and define $T^\star=
\max_{i=1,\ldots,q^\star}\|\theta_i^\star\|$.  Denote by
$\mathcal{H}_q(\varepsilon,T)$ the Hellinger ball associated to the
parameter set $\Theta(T)$.  Then 
$$
        \mathcal{N}(\mathcal{H}_{q}(\varepsilon,T),\delta) \le
        \left(
        \frac{C_1^\star e^{C_2^\star T^2}\varepsilon}{\delta}
        \right)^{18(d+1)q+1} 
$$
for all $q\ge q^\star$, $T\ge T^\star$, and $\delta/\varepsilon\le 1$,
where $C_1^\star,C_2^\star$ are constants that depend on $d$, $q^\star$ and
$f^\star$ only.  To prove this, it suffices to show that
Assumption A holds and that $\|H_k\|_4$ for $k=0,1,2$ and $\|H_3\|_2$
are of order $e^{CT^2}$.  These facts are readily verified by a
straightforward computation.
\end{example}

Let us first consider the case of a compact parameter set.  We 
obtain a general consistency result under Assumption A.

\begin{prop}
\label{prop:mixconsist}
Suppose that the parameter set
$\Theta(n)=\Theta$ is a bounded subset of $\mathbb{R}^d$
independent of $n$, and that Assumption A holds.
If we choose a penalty of the form
$$
	\pen(n,q) = q\,\omega(n),\qquad
	\lim_{n\to\infty}\frac{\log\log n}{\omega(n)}=
	\lim_{n\to\infty}\frac{\omega(n)}{n}=0,
$$
then $\hat q_n\to q^\star$ as $n\to\infty$ $\mathbf{P}^\star$-a.s.
On the other hand, if 
$$
	\pen(n,q)=C\,q\,\log\log n
$$
where $C>0$ is a sufficiently small constant, then we have
$\hat q_n\ne q^\star$ infinitely often $\mathbf{P}^\star$-a.s.
\end{prop}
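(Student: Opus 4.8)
The plan is to derive the two halves of the proposition from the general results of Section~\ref{sec:consist} by feeding in the geometry of location mixtures: consistency will come from Theorem~\ref{thm:mainconsist} together with the local Hellinger entropy bound of Theorem~\ref{cor:localmixtures}, and inconsistency from Corollary~\ref{cor:inconsist} together with envelope and global entropy bounds for the weighted classes $\mathcal{D}_q$ that are part of the analysis of \cite{GasHanGeo}. Since $\Theta(n)=\Theta$ does not depend on $n$ we have $\mathcal{M}_q^n=\mathcal{M}_q$, so we are in the setting of Corollaries~\ref{cor:mainconsist}--\ref{cor:inconsist}.

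For consistency, I would first invoke Theorem~\ref{cor:localmixtures}, valid under Assumption~A, which furnishes the entropy bound required by Theorem~\ref{thm:mainconsist} with the constant radius $K(n)\equiv C_\Theta\ge 1$ (so that $\log K(n)=o(n)$) and dimension $\eta(q)=18(d+1)q+1$, an increasing function with $\eta(q)\ge q$. The penalty $\pen(n,q)=q\,\omega(n)$ is increasing in $q$, and $\max_{q<q^\star}\pen(n,q)/n\le q^\star\omega(n)/n\to 0$; it remains to check the other condition of Theorem~\ref{thm:mainconsist}. Since $K(n)$ is constant, $\log K(2n)\vee\log\log n=\log\log n$ for large $n$; moreover $\pen(n,q)-\pen(n,q^\star)=(q-q^\star)\omega(n)$, and $\eta(q)/(q-q^\star)=18(d+1)+(18(d+1)q^\star+1)/(q-q^\star)\le 18(d+1)(q^\star+1)+1$ for every integer $q>q^\star$. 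Hence
$$
\sup_{q>q^\star}\frac{\eta(q)\{\log K(2n)\vee\log\log n\}}{\pen(n,q)-\pen(n,q^\star)}\le\{18(d+1)(q^\star+1)+1\}\,\frac{\log\log n}{\omega(n)}\to 0 ,
$$
and Theorem~\ref{thm:mainconsist} yields $\hat q_n\to q^\star$ a.s. (The same estimate with $\omega(n)=C\log\log n$ for $C$ large recovers the consistency claim recorded after Corollary~\ref{cor:inconsist}.)

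For inconsistency, I would verify the two hypotheses of Corollary~\ref{cor:inconsist} for $q=q^\star+1$ and then read off the conclusion with $\eta(q)=q$. Hypothesis~(1) needs an $L^{2+\alpha}(f^\star d\mu)$ envelope of $\mathcal{D}_q$ and finiteness of $\int_0^1\sqrt{\log\mathcal{N}(\mathcal{D}_q,u)}\,du$. When $h(f,f^\star)$ is bounded away from $0$ one has $|d_f|\le(1+\sqrt{H_0})/h(f,f^\star)$ trivially, since $f/f^\star\le H_0$; in the delicate regime $h(f,f^\star)\to 0$, a Taylor expansion of $\sqrt{f/f^\star}$ along mixture paths shows $|d_f|$ is dominated, uniformly over $f\in\mathcal{M}_q$, by a fixed linear combination of $1,H_0,H_1,H_2$, the uniformity being exactly what the geometric analysis of \cite{GasHanGeo} provides; by Assumption~A this envelope lies in $L^4(f^\star d\mu)\subset L^{2+\alpha}(f^\star d\mu)$ with $\alpha=2$. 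The Dudley integral is finite because $\mathcal{D}_q$ has polynomial covering numbers, $\log\mathcal{N}(\mathcal{D}_q,u)=O(\log(1/u))$: as noted after Corollary~\ref{cor:lowerlil}, the global entropy of the weighted class is controlled in \cite{GasHanGeo} by its local (Hellinger) entropy, which is polynomial by Theorem~\ref{cor:localmixtures}. Hypothesis~(2) asks that $\mathcal{\bar D}_q^c\backslash\mathcal{\bar D}_{q^\star}$ be nonempty. Writing $f^\star=\sum_{i=1}^{q^\star}\pi_i^\star f_{\theta_i^\star}$, the condition $f^\star\notin\cl\mathcal{M}_{q^\star-1}$ forces the weights to be positive and the locations distinct, so $\mathcal{M}_{q^\star}$ behaves like a regular parametric model near $f^\star$ and $\mathcal{\bar D}_{q^\star}$ lies in the finite-dimensional span $V$ of $\{f_{\theta_i^\star}/f^\star-1\}_{i\le q^\star}$ and $\{(\partial f_{\theta_i^\star}/\partial\theta^j)/f^\star\}_{i\le q^\star,\,j\le d}$. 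I would choose $\theta_0\in\Theta$ distinct from all $\theta_i^\star$ with $f_{\theta_0}/f^\star-1\notin V$ (possible for a nondegenerate location family once $\Theta$ is infinite) and set $f_t=(1-t)f^\star+tf_{\theta_0}\in\mathcal{M}_{q^\star+1}\subseteq\mathcal{M}_q$ for $0<t\le1$; then $t\mapsto h(f_t,f^\star)$ is continuous with $h(f_t,f^\star)\to 0$, and $d_{f_t}\to d_0:=(f_{\theta_0}/f^\star-1)/\|f_{\theta_0}/f^\star-1\|_2$ in $L^2(f^\star d\mu)$, so $d_0\in\mathcal{\bar D}_q^c\backslash\mathcal{\bar D}_{q^\star}$. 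Corollary~\ref{cor:inconsist} then gives $\hat q_n\ne q^\star$ infinitely often a.s.

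The hard part will be the geometric input for hypothesis~(1): a uniform $L^{2+\alpha}$ envelope and a polynomial global-entropy bound for the weighted classes $\mathcal{D}_q$ of location mixtures. Because finite location mixtures are non-regular---there is no Lipschitz reparametrization near configurations with coincident components, and the tangent directions reached by merging components involve second derivatives of $f_\theta$---these estimates are delicate and rest on the analysis of \cite{GasHanGeo} that also underlies Theorem~\ref{cor:localmixtures}. Once they are granted, together with the (easy) fact that the tangent cone of $\mathcal{M}_{q^\star+1}$ at $f^\star$ is strictly richer than that of $\mathcal{M}_{q^\star}$, both assertions follow mechanically; the consistency half is, given Theorem~\ref{cor:localmixtures}, essentially a bookkeeping exercise.
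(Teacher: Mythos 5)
Your proposal is correct in outline and, for the consistency half, coincides with the paper's argument: both feed the entropy bound of Theorem~\ref{cor:localmixtures} (constant $K$, $\eta(q)=18(d+1)q+1$) into Theorem~\ref{thm:mainconsist}/Corollary~\ref{cor:mainconsist}, and your explicit bound $\sup_{q>q^\star}\eta(q)/(q-q^\star)\le 18(d+1)(q^\star+1)+1$ is a clean way to check the penalty condition. The genuine divergence is in the inconsistency half, specifically in exhibiting a point of $\mathcal{\bar D}_{q^\star+1}^c\backslash\mathcal{\bar D}_{q^\star}$. The paper first proves (Lemma~\ref{lem:dqstar}) that $\mathcal{\bar D}_{q^\star}$ is exactly the normalized span of $\{f_{\theta_i^\star}/f^\star\}$ (zero-sum weights) and $\{D_1f_{\theta_i^\star}/f^\star\}$, and then \emph{splits} the first component along $\theta_1^\star\pm vt$, producing the second-derivative direction $v^*D_2f_{\theta_1^\star}v$, whose independence from the zeroth- and first-derivative directions is exactly what \cite[Theorem 3.7]{GasHanGeo} supplies. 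You instead \emph{add} a new component at a fresh location $\theta_0$, obtaining the direction $f_{\theta_0}/f^\star-1$; this is an equally valid continuously accessible point, and your path $(1-t)f^\star+tf_{\theta_0}$ is if anything simpler to analyze since $(f_t-f^\star)/t$ is constant in $t$. What each route buys: yours needs only one extra admissible location in $\Theta$ rather than a segment through $\theta_1^\star$, but it requires a slightly different linear-independence input, namely that $f_{\theta_0}\notin\mathrm{span}\{f_{\theta_i^\star},D_1f_{\theta_i^\star}\}$ for some $\theta_0\in\Theta$ --- your ``possible for a nondegenerate location family'' remark is where this is swept under the rug, and it does not follow from the independence statement at the fixed points $\theta_i^\star$ that the paper cites (though the same results in \cite{GasHanGeo} cover distinct locations, so it is available). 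Also note that the inclusion $\mathcal{\bar D}_{q^\star}\subseteq V/\|\cdot\|_2$ that you assert from ``regularity near $f^\star$'' is the nontrivial direction of Lemma~\ref{lem:dqstar}: it requires the identifiability result of \cite{GasHanGeo} to force approximating parameters to converge to $(\pi^\star,\theta^\star)$ before the Taylor expansion applies. Your acknowledged reliance on \cite{GasHanGeo} for the envelope and the Dudley integral of $\mathcal{D}_{q^\star+1}$ matches the paper's own (very terse) treatment of hypothesis~(1).
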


We therefore find that in the setting of location mixtures with a 
compact parameter set, the minimal penalty is of order $\log\log n$.
Moreover, the popular BIC penalty
\begin{equation}
\label{eq:bic}
	\pen(n,q) = \frac{dq+q-1}{2}\,\log n
\end{equation}
yields a strongly consistent mixture order estimator in this setting, 
without a prior upper bound on the order.  The requisite Assumption A 
is mild, which highlights the broad applicability of this 
result.  However, the assumption of a compact parameter space can be 
quite restrictive in practice.

Let us therefore consider a case where the parameter space is noncompact.
For simplicity we restrict our attention to Gaussian mixtures, that is,
we choose $f_0(x) = (2\pi)^{-d/2}e^{-\|x\|^2/2}$, and we choose the 
restricted parameter sets $\Theta(n) = \{\theta\in\mathbb{R}^d:\|\theta\|\le
T(n)\}$ for some sequence $T(n)\uparrow\infty$.  Our aim is to choose the
penalty $\pen(n,q)$ and cutoff $T(n)$ so that the penalized likelihood
order estimator is strongly consistent. 

 In this setting, we obtain the
following result.

\begin{prop}
\label{prop:mixconsist2}
For the case $f_0(x) = (2\pi)^{-d/2}e^{-\|x\|^2/2}$ and 
$\Theta(n) = \{\theta\in\mathbb{R}^d:\|\theta\|\le T(n)\}$,
consider a penalty of the form $\pen(n,q)=q\,\omega(n)$.
If
$$
	\lim_{n\to\infty}\frac{\log\log n}{\omega(n)}=
	\lim_{n\to\infty}\frac{\omega(n)}{n}=0,\quad
	T(n) = O(\sqrt{\log\log n}),
$$
then $\hat q_n\to q^\star$ as $n\to\infty$ $\mathbf{P}^\star$-a.s.

On the other hand, the BIC penalty (\ref{eq:bic}) yields a strongly
consistent order estimator if $T(n) = o(\sqrt{\log n})$.
\end{prop}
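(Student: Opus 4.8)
The plan is to deduce both assertions from Theorem~\ref{thm:mainconsist}, taking the Gaussian entropy estimate recorded in Example~\ref{ex:gaussian} as the hypothesis on $\mathcal{N}(\mathcal{H}_q^n(\varepsilon),\delta)$. Since here $\Theta(n)=\{\theta\in\mathbb{R}^d:\|\theta\|\le T(n)\}$ we have $\mathcal{H}_q^n(\varepsilon)=\mathcal{H}_q(\varepsilon,T(n))$, so Example~\ref{ex:gaussian} gives
$$
\mathcal{N}(\mathcal{H}_q^n(\varepsilon),\delta)\le\left(\frac{K(n)\,\varepsilon}{\delta}\right)^{\eta(q)},
\qquad
K(n)=C_1^\star e^{C_2^\star T(n)^2},\quad \eta(q)=18(d+1)q+1,
$$
for all $q\ge q^\star$ and $\delta\le\varepsilon$, as soon as $T(n)\ge T^\star$; since $T(n)\uparrow\infty$ this holds for all $n$ sufficiently large, which is all that Theorem~\ref{thm:mainconsist} requires (and the verification of Assumption~A for the Gaussian $f_0$ is already absorbed into Example~\ref{ex:gaussian}). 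First I would check the structural hypotheses of that theorem: $\eta(q)=18(d+1)q+1\ge q$ is increasing; $K(n)$ is increasing in $n$ because $T(n)$ is, and can be taken $\ge 1$ after enlarging $C_1^\star$; and $\log K(n)=\log C_1^\star+C_2^\star T(n)^2$ is $O(\log\log n)$ in the first regime and $o(\log n)$ in the BIC regime, hence $o(n)$ in both.

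The one elementary observation I will use repeatedly is that $q\mapsto\eta(q)/(q-q^\star)$ is decreasing on the integers $q>q^\star$ (its derivative in $q$ has constant sign), so
$$
A:=\sup_{q>q^\star}\frac{\eta(q)}{q-q^\star}=\eta(q^\star+1)=18(d+1)(q^\star+1)+1<\infty.
$$
For the first regime, take $\pen(n,q)=q\,\omega(n)$ with $\omega(n)>0$; this is increasing in $q$, and $\pen(n,q)-\pen(n,q^\star)=(q-q^\star)\,\omega(n)$. Because $T(n)=O(\sqrt{\log\log n})$ one gets $\log K(n)=O(\log\log n)$, and using $\log\log(2n)\le 2\log\log n$ for large $n$ this yields $\log K(2n)\vee\log\log n\le C\log\log n$ for some $C$ and all large $n$. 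Hence
$$
\sup_{q>q^\star}\frac{\eta(q)\{\log K(2n)\vee\log\log n\}}{\pen(n,q)-\pen(n,q^\star)}
=\frac{\log K(2n)\vee\log\log n}{\omega(n)}\cdot A
\le\frac{AC\log\log n}{\omega(n)}\longrightarrow 0,
$$
while $\max_{q<q^\star}\pen(n,q)/n\le q^\star\omega(n)/n\to 0$; Theorem~\ref{thm:mainconsist} then gives $\hat q_n\to q^\star$ a.s.

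For the BIC regime, take $\pen(n,q)=\tfrac12\{(d+1)q-1\}\log n$, which is increasing in $q$ and satisfies $\pen(n,q)-\pen(n,q^\star)=\tfrac12(d+1)(q-q^\star)\log n$. Now $T(n)=o(\sqrt{\log n})$ forces $\log K(2n)=o(\log n)$ (since $\log(2n)\sim\log n$), and trivially $\log\log n=o(\log n)$, so $\log K(2n)\vee\log\log n=o(\log n)$; therefore
$$
\sup_{q>q^\star}\frac{\eta(q)\{\log K(2n)\vee\log\log n\}}{\pen(n,q)-\pen(n,q^\star)}
=\frac{2A\{\log K(2n)\vee\log\log n\}}{(d+1)\log n}\longrightarrow 0,
$$
and $\max_{q<q^\star}\pen(n,q)/n=O(\log n/n)\to 0$, so Theorem~\ref{thm:mainconsist} applies once more. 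I do not expect a genuine obstacle here: the geometric difficulty is entirely absorbed into Example~\ref{ex:gaussian} (which rests on Theorem~\ref{cor:localmixtures} together with the $e^{CT^2}$ growth of the relevant norms of $H_k$ for Gaussians), and Theorem~\ref{thm:mainconsist} packages the remaining probabilistic work; the only points needing care are the bookkeeping ones—invoking the entropy bound only for $n$ with $T(n)\ge T^\star$, checking monotonicity and $\log K(n)=o(n)$, and the harmless passage from $\log\log n$ to $\log\log(2n)$ and from $\log n$ to $\log(2n)$.
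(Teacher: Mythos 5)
Your proposal is correct and follows exactly the paper's route: invoke Example~\ref{ex:gaussian} to supply the entropy hypothesis of Theorem~\ref{thm:mainconsist} with $\eta(q)=18(d+1)q+1$ and $\log K(n)=\log C_1^\star+C_2^\star T(n)^2$, then check the two penalty conditions. The paper leaves this verification implicit ("follows immediately"), and your bookkeeping — the supremum over $q>q^\star$, the $T(n)\ge T^\star$ caveat, and the $\log K(2n)$ comparisons — correctly fills in those details.
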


This result illustrates that our theory can establish consistency of the 
penalized likelihood mixture order estimator without any prior upper 
bounds on the model order or the magnitude of the true parameters. Let 
us note that there is nothing particularly special about the Gaussian 
case: a similar result can be obtained, in principle, for any mixture 
distribution $f_0$, as long as one can obtain suitable estimates on the 
quantities $\|H_i\|_4$ that appear in Theorem \ref{cor:localmixtures} 
(see Example \ref{ex:gaussian} for the Gaussian case).

The proofs of Propositions \ref{prop:mixconsist} and 
\ref{prop:mixconsist2} are given in Appendix \ref{sec:proofmixconsist} 
below.

\appendix

\section{Proof of Theorem \ref{thm:upperlil}}
\label{sec:proofupperlil}

The proof of Theorem \ref{thm:upperlil} is based on the following 
deviation bound for the log-likelihood ratio.  This bound is essentially 
from \cite[Corollary 7.5]{vdG00}, but the additional maximum inside the 
probability is essential for our purposes.

\begin{thm}
\label{thm:deviationbound}
Let $\mathcal{M}$ be a family of strictly positive 
probability densities with respect to
a reference measure $\mu$, fix some $f^\star\in\mathcal{M}$, and define 
the Hellinger ball $\mathcal{H}(\varepsilon) = \{\sqrt{f/f^\star}:
f\in\mathcal{M},~h(f,f^\star)\le\varepsilon\}$ where 
$h(f,g)^2 = \int (\sqrt{f}-\sqrt{g})^2d\mu$.  Suppose that
for some constants $K\ge 1$, $p\ge 1$ and all
$\delta\le\varepsilon$
$$
        \mathcal{N}(\mathcal{H}(\varepsilon),\delta)
        \le
        \left(
        \frac{K\varepsilon}{\delta}
        \right)^p,
$$
where $\mathcal{N}(\mathcal{H}(\varepsilon),\delta)$ is the minimal number
of brackets of $L^2(f^\star d\mu)$-width $\delta$ needed to cover
$\mathcal{H}(\varepsilon)$.  Let $(X_i)_{i\in\mathbb{N}}$ be i.i.d.\
with distribution $f^\star d\mu$.  Then
$$
        \mathbf{P}\left[
        \max_{n\le k\le 2n}
        \sup_{f\in\mathcal{M}}
        \sum_{j=1}^k\log\left(\frac{f(X_j)}{f^\star(X_j)}\right)
        \ge \alpha
        \right] \le
        C\,e^{-\alpha/C}
$$
for all $\alpha\ge Cp(1+\log K)$, $n\ge 1$
\emph{[}$C$ is a universal constant\emph{]}.
\end{thm}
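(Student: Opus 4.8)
\emph{Overview and Step 1 (linearization).} The plan is to linearize the log-likelihood ratio, reduce by a peeling argument over Hellinger shells to a centered empirical process, and then control the maximum over $n\le k\le 2n$ by a submartingale argument rather than a union bound over $k$; only this last ingredient goes beyond \cite[Corollary 7.5]{vdG00}, whose single-time analogue supplies the rest. From $\log y\le 2(\sqrt y-1)$ one has the pointwise bound $\log\frac{f}{f^\star}\le 2\,g_f$ with $g_f:=\sqrt{f/f^\star}-1\ge-1$; since the $X_i$ are i.i.d.\ with $\mathbf{E}^\star g_f(X_1)=-\tfrac12 h(f,f^\star)^2$ and $\|g_f\|_2^2=h(f,f^\star)^2$, writing $N_k(f):=\sum_{i=1}^k(g_f(X_i)-\mathbf{E}^\star g_f(X_1))$ gives
\[
\sum_{i=1}^k\log\frac{f(X_i)}{f^\star(X_i)}\ \le\ 2N_k(f)-k\,h(f,f^\star)^2 .
\]
(When sharper control of the \emph{lower} tail is wanted one replaces $f$ by $\bar f=\tfrac12(f+f^\star)$ as in \cite{vdG00}; for an upper deviation this is not needed.) It therefore suffices to bound $\mathbf{P}[\max_{n\le k\le 2n}\sup_{f\in\mathcal M}\{2N_k(f)-k h(f,f^\star)^2\}\ge\alpha]$, and crucially the class $\{g_f:h(f,f^\star)\le\varepsilon\}$ differs from $\mathcal H(\varepsilon)$ only by the constant $-1$, so it inherits the bracketing bound $\mathcal N(\,\cdot\,,\delta)\le(K\varepsilon/\delta)^p$.

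\emph{Step 2 (peeling).} I would fix $\sigma$ with $\sigma^2\asymp p(1+\log K)/n$ (the constant chosen large) and split $\mathcal M$ into the ball $B_0=\{f:h(f,f^\star)\le\sigma\}$ and the shells $S_j=\{f:2^{j-1}\sigma< h(f,f^\star)\le 2^j\sigma\}$; since $h(\cdot,f^\star)\le\sqrt 2$ there are only $J=O(\log(1/\sigma))=O(\log n)$ of them. Writing $R_j=2^j\sigma$ and $R_0=\sigma$, on $S_j$ one has $h(f,f^\star)^2>R_j^2/4$ and $k\ge n$, so $2N_k(f)-kh(f,f^\star)^2\ge\alpha$ forces $N_k(f)\ge u_j$ with $u_j\asymp\alpha+nR_j^2$. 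A union bound over the $J+1$ classes then reduces everything to estimating, for each $j$,
\[
\mathbf{P}\Big[\max_{n\le k\le 2n}\ \sup_{f:\,h(f,f^\star)\le R_j} N_k(f)\ \ge\ u_j\Big].
\]

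\emph{Step 3 (maximum over $k$, then chaining at a single time).} For each fixed $f$ the process $k\mapsto N_k(f)$ is a mean-zero martingale, hence $k\mapsto\sup_{f:h\le R_j}N_k(f)$ is a submartingale (an essential supremum of martingales), and so is $\exp(\lambda\cdot)$ for $\lambda>0$ once a harmless truncation of $g_f$ is inserted to make the relevant exponential moments finite. Doob's maximal inequality then gives
\[
\mathbf{P}\Big[\max_{n\le k\le 2n}\sup_{f:h\le R_j}N_k(f)\ge u_j\Big]\ \le\ e^{-\lambda u_j}\,\mathbf{E}\exp\Big(\lambda\sup_{f:h\le R_j}N_{2n}(f)\Big),
\]
so the maximum over $k\in[n,2n]$ is absorbed for free and I am left with an exponential-moment estimate for the empirical process at the single time $2n$. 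The class $\{g_f:h(f,f^\star)\le R_j\}$ has $L^2(f^\star d\mu)$-radius $R_j$ and bracketing entropy $\le(KR_j/\delta)^p$; a Bernstein-type bracketing chaining argument — as in \cite[Chapter 5 and Corollary 7.5]{vdG00}, whose truncation device accommodates the fact that $g_f$ is not bounded above — yields $\mathbf{E}\sup_{f:h\le R_j}N_{2n}(f)\lesssim\sqrt n\,R_j\sqrt{p(1+\log K)}$ together with a Bernstein tail for $\sup_{f:h\le R_j}N_{2n}(f)$ about its mean with variance proxy of order $nR_j^2$. By the choice of $\sigma$ one has $u_j\ge 2\,\mathbf{E}\sup_{f:h\le R_j}N_{2n}(f)$ for every $j\ge0$, so optimizing $\lambda$ gives $\mathbf{P}[\cdots]\le C\exp(-c\alpha)\exp(-c\,nR_j^2)$; since $nR_j^2=n4^j\sigma^2$ grows geometrically in $j$ while $n\sigma^2\gtrsim p(1+\log K)\ge 1$, summing over $j=0,\dots,J$ and adding the ball term yields $\mathbf{P}[\text{event}]\le Ce^{-\alpha/C}$ for all $\alpha\ge Cp(1+\log K)$, as asserted.

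\emph{Main obstacle.} The genuine difficulty — and the reason this is not a verbatim citation of \cite[Corollary 7.5]{vdG00} — is the maximum over $n\le k\le 2n$ \emph{inside} the probability: a crude union bound over $k$ would cost a factor $n$ and ruin the $n$-free estimate, and the supermartingale $k\mapsto\prod_{i\le k}f(X_i)/f^\star(X_i)$ does not help directly because a supremum of supermartingales over $f$ need not be a (sub- or super-)martingale. The resolution is the submartingale/Doob step above, which trades the maximum over $k$ for the exponential moment of the empirical process at the single time $2n$ (here the passage to the \emph{centered} process $N_k(f)$ is essential, since its pointwise supermartingale analogue is a genuine martingale whose supremum \emph{is} a submartingale). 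The remaining technical nuisance, inherited from \cite{vdG00}, is carrying out the chaining despite the unboundedness of the likelihood ratio; it is handled exactly as there, by truncating $g_f$ at a level adapted to the current Hellinger shell and controlling the discarded tail separately.
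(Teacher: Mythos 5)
Your overall strategy is sound and close in spirit to the paper's: linearize the log-likelihood ratio against a centered empirical process indexed by (essentially) the Hellinger class, peel over Hellinger shells, and invoke van de Geer's bracketing machinery for the fixed-time exponential tail. Your linearization $\log(f/f^\star)\le 2(\sqrt{f/f^\star}-1)$ with $\mathbf{E}^\star[\sqrt{f/f^\star}-1]=-\tfrac12 h(f,f^\star)^2$ is a legitimate variant of the paper's convexity trick $\log(f/f^\star)\le 2\log(\bar f/f^\star)$ with $\bar f=(f+f^\star)/2$ followed by $D(f^\star\|\bar f)\ge h(\bar f,f^\star)^2$; both produce the same quadratic drift and the same entropy input (the paper still has to transfer the bracketing bound from $\mathcal{H}$ to $\{\sqrt{\bar f/f^\star}\}$, which your class avoids). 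The genuine divergence is in the treatment of $\max_{n\le k\le 2n}$. The paper disposes of it with an Etemadi-type maximal inequality for suprema of empirical processes (Lemma \ref{lem:etemadi}): a stopping-time argument reduces the maximum over $k$ to $3\max_{n\le k\le 2n}$ of the \emph{fixed-time} tail probabilities, at the cost only of rescaling the threshold by a constant. This requires exactly the input that van de Geer's Theorem 5.11 supplies, namely a tail bound at each fixed $k$, and nothing more. Your Doob route is valid in principle ($\sup_f N_k(f)$ is indeed a submartingale, and $e^{\lambda\,\cdot}$ preserves this), but it demands a strictly stronger input: a sub-gamma bound on $\mathbf{E}\exp(\lambda\sup_{f}N_{2n}(f))$ with the correct variance proxy $nR_j^2$ and the correct scale parameter. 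That is not what the cited bracketing results deliver (they give tail bounds, not moment generating function bounds for the supremum), so at this point you would have to import Bousquet's MGF version of Talagrand's concentration inequality after truncating $g_f$, and then separately control the discarded upper tails uniformly in $k$ (which is fine, since the truncated-away part is a sum of nonnegative terms and its maximum over $k\le 2n$ is attained at $k=2n$). This is all doable, but it is precisely the extra machinery that the Etemadi lemma renders unnecessary; if you keep your route, the MGF bound for the supremum is the one step you must actually prove rather than cite. Your bookkeeping of the peeling (geometric growth of $nR_j^2$ starting from $n\sigma^2\gtrsim p(1+\log K)$, so the sum over shells converges independently of $n$) is correct and matches the paper's choice of shells $\{nh(\bar f,f^\star)^2\le\alpha 2^s\}$ up to reparametrization.
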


\begin{proof}
Define $\bar f = (f+f^\star)/2$ for any $f\in\mathcal{M}$, and define
the empirical process
$\nu_n(g) = n^{-1/2}\sum_{k=1}^n\{g(X_k)-\mathbf{E}[g(X_k)]\}$.
Using concavity of $\log x$ we have
$$
        \sum_{j=1}^k\log\left(\frac{f(X_j)}{f^\star(X_j)}\right)
        \le
        2k^{1/2} \nu_k(\log(\bar f/f^\star)) - 2k D(f^\star||\bar f),
$$
where $D(f^\star||f)= \int \log(f^\star/f) f^\star d\mu$ is relative
entropy.  As $D(f^\star||f) \ge h(f,f^\star)^2$, we can estimate
\begin{align*}
        &\mathbf{P}\Bigg[
        \max_{n\le k\le 2n}
        \sup_{f\in\mathcal{M}}
        \sum_{j=1}^k\log\left(\frac{f(X_j)}{f^\star(X_j)}\right)
        \ge \alpha
        \Bigg] \\
        &\le
        \mathbf{P}\Bigg[
        \max_{n\le k\le 2n}
        \sup_{f\in\mathcal{M}}
        \{
        \sqrt{k}\,\nu_k(\log(\bar f/f^\star)) - k h(\bar f,f^\star)^2
        \}
        \ge \frac{\alpha}{2}
        \Bigg] \\
        &\le
        \sum_{s=0}^S \mathbf{P}\Bigg[
        \max_{n\le k\le 2n}
        \sup_{f\in\mathcal{M}^s}
        |\sqrt{k}\,\nu_k(\log(\bar f/f^\star))|
        \ge \alpha 2^{s-1}
        \Bigg] \\
        &\le
        3\sum_{s=0}^S \max_{n\le k\le 2n} \mathbf{P}\Bigg[
        \sup_{f\in\mathcal{M}^s}
        |\nu_k(\log(\textstyle{\sqrt{\bar f/f^\star}}))|
        \ge  
        \alpha 2^{s-5}/\sqrt{n}
        \Bigg]
\end{align*}
where $\mathcal{M}^s=\{f\in\mathcal{M}:n h(\bar f,f^\star)^2\le\alpha 2^s\}$,
$S = \min\{s:\alpha 2^s n^{-1}>2\}$, and we have used 
Lemma \ref{lem:etemadi} below for the last inequality.  The remainder
of the proof is identical to that of \cite[Theorem 7.4]{vdG00}, provided
we show that for
$\mathcal{\bar H}(\varepsilon) = \{\sqrt{\bar f/f^\star}:
f\in\mathcal{M},~h(\bar f,f^\star)\le\varepsilon\}$
$$
        \mathcal{N}(\mathcal{\bar H}(\varepsilon),\delta)
        \le
        \left(
        \frac{2\sqrt{2}K\varepsilon}{\delta}
        \right)^p.
$$
To this end, fix $\delta\le\varepsilon$, and note that 
$h(f,f^\star)\le 4h(\bar f,f^\star)$ by \cite[Lemma 4.2]{vdG00},
so that $\{f\in\mathcal{M}:h(\bar f,f^\star)\le\varepsilon\}
\subseteq\{f\in\mathcal{M}:h(f,f^\star)\le 4\varepsilon\}$.
By assumption, there exist $N\le (2\sqrt{2}K\varepsilon/\delta)^p$ and 
functions $g_1,\ldots,g_N,
h_1,\ldots,h_N$ such that $\|h_i-g_i\|_2\le\delta\sqrt{2}$ for every $i$,
and for every $u\in \mathcal{H}(4\varepsilon)$ there is an $i$ such
that $g_i\le u\le h_i$.  But for every $f\in\mathcal{M}$ such that
$h(\bar f,f^\star)\le\varepsilon$, we then have for some $i$
$$
        2^{-1/2}\sqrt{g_i^2 + 1} \le
        \sqrt{\bar f/f^\star }
        \le 2^{-1/2}\sqrt{h_i^2+1}.
$$
Using $|\sqrt{a+c}-\sqrt{b+c}|\le |\sqrt{a}-\sqrt{b}|$
for $a,b,c\ge 0$ we have
$$
        \left\|2^{-1/2}\sqrt{h_i^2+1} - 2^{-1/2}\sqrt{g_i^2 + 1}\right\|_2
        \le 2^{-1/2} \|h_i-g_i\|_2 \le \delta.
$$
The result now follows directly.
\end{proof}

The following variant of Etemadi's inequality was used in the proof. The 
proof follows closely that of the classical Etemadi inequality, see 
\cite[Appendix M19]{Bil99}.

\begin{lem}
\label{lem:etemadi}
Let $\mathcal{Q}$ be a family of measurable 
functions $f:E\to\mathbb{R}$.  Then we have for
every $\alpha>0$ and $m,n\in\mathbb{N}$, $m\le n$
$$
        \mathbf{P}^\star\left[
        \max_{k=m,\ldots,n}
        \sup_{f\in\mathcal{Q}}|S_k(f)|
        \ge 3\alpha
        \right] 
	\le
        3\max_{k=m,\ldots,n}
        \mathbf{P}^\star\left[
        \sup_{f\in\mathcal{Q}}|S_k(f)|
        \ge \alpha
        \right],
$$
where $S_n(f)=n^{1/2}\nu_n(f)$.  
\end{lem}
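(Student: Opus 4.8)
The plan is to run the classical first-passage proof of Etemadi's inequality, carrying the supremum over $\mathcal{Q}$ through the argument and replacing each use of the scalar triangle inequality by its supremum counterpart. Write $M_k=\sup_{f\in\mathcal{Q}}|S_k(f)|$ (an essential supremum, as in Remark \ref{rem:esssup}), and introduce the first-passage events
$$
        B_k=\{M_j<3\alpha\text{ for }m\le j<k\}\cap\{M_k\ge3\alpha\},
        \qquad k=m,\ldots,n,
$$
so that $\{\max_{k=m,\ldots,n}M_k\ge3\alpha\}=\bigcup_{k=m}^n B_k$ is a disjoint union. It then suffices to bound $\sum_{k=m}^n\mathbf{P}^\star[B_k]$.

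On $B_k$ I split according to whether $M_n\ge\alpha$ or $M_n<\alpha$. In the latter case the key observation is that for every $g\in\mathcal{Q}$ one has $|S_k(g)|\le|S_n(g)|+|S_n(g)-S_k(g)|\le M_n+\sup_{f\in\mathcal{Q}}|S_n(f)-S_k(f)|$; taking the supremum over $g$ and rearranging gives $\sup_{f\in\mathcal{Q}}|S_n(f)-S_k(f)|\ge M_k-M_n>2\alpha$ on $B_k\cap\{M_n<\alpha\}$. Hence $B_k\subseteq\{M_n\ge\alpha\}\cup(B_k\cap E_k)$ with $E_k=\{\sup_{f\in\mathcal{Q}}|S_n(f)-S_k(f)|\ge2\alpha\}$. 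Since $S_n(f)-S_k(f)=\sum_{j=k+1}^n\{f(X_j)-\mathbf{E}[f(X_j)]\}$ depends only on $X_{k+1},\ldots,X_n$ while $B_k\in\sigma(X_1,\ldots,X_k)$, independence of the $X_i$ yields $\mathbf{P}^\star[B_k\cap E_k]=\mathbf{P}^\star[B_k]\,\mathbf{P}^\star[E_k]$. Finally the supremum triangle inequality $\sup_{f\in\mathcal{Q}}|S_n(f)-S_k(f)|\le M_n+M_k$ gives $E_k\subseteq\{M_n\ge\alpha\}\cup\{M_k\ge\alpha\}$, so $\mathbf{P}^\star[E_k]\le2\max_{j=m,\ldots,n}\mathbf{P}^\star[M_j\ge\alpha]$.

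Putting this together, summing $\mathbf{P}^\star[B_k]\le\mathbf{P}^\star[\{M_n\ge\alpha\}\cap B_k]+\mathbf{P}^\star[B_k]\,\mathbf{P}^\star[E_k]$ over $k=m,\ldots,n$ and using disjointness of the $B_k$ together with $\sum_k\mathbf{P}^\star[B_k]\le1$ produces $\mathbf{P}^\star[\max_k M_k\ge3\alpha]\le\mathbf{P}^\star[M_n\ge\alpha]+2\max_j\mathbf{P}^\star[M_j\ge\alpha]\le3\max_j\mathbf{P}^\star[M_j\ge\alpha]$, which is the claim. I do not expect a genuine obstacle here; the only points requiring care are that the events $B_k$ are measurable and that the various supremum and rearrangement steps above remain valid when all suprema are read as essential suprema (both follow from the minimality property defining the essential supremum), and that the past/future decomposition of the underlying $\sigma$-algebras is set up correctly so that the factorization $\mathbf{P}^\star[B_k\cap E_k]=\mathbf{P}^\star[B_k]\,\mathbf{P}^\star[E_k]$ is justified.
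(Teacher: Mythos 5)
Your argument is correct and is essentially the paper's own proof: your first-passage events $B_k$ are exactly the events $\{\tau=k\}$ for the stopping time the paper introduces, and the subsequent steps (the rearranged triangle inequality giving $\sup_{f\in\mathcal{Q}}|S_n(f)-S_k(f)|\ge 2\alpha$, the past/future independence factorization, and the final bound $E_k\subseteq\{M_n\ge\alpha\}\cup\{M_k\ge\alpha\}$) coincide with the paper's, with your last step merely making explicit what the paper leaves as ``easily completed.'' Nothing further is needed.
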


\begin{proof}
Define the stopping time
$$
	\tau=\inf\left\{k\ge m:\sup_{f\in\mathcal{Q}}|S_k(f)|\ge 3\alpha
	\right\}.
$$
Then
\begin{align*}
        &\mathbf{P}^\star\left[
        \max_{k=m,\ldots,n}
        \sup_{f\in\mathcal{Q}}|S_k(f)|
        \ge 3\alpha
        \right] =
        \mathbf{P}^\star[\tau\le n] 
	\\ &
        \le
        \mathbf{P}^\star\left[
        \sup_{f\in\mathcal{Q}}|S_n(f)|\ge\alpha
        \right] + 
        \sum_{k=m}^{n}
        \mathbf{P}^\star\left[\tau=k\mbox{\rm\ and }
        \sup_{f\in\mathcal{Q}}|S_n(f)|<\alpha\right].
\end{align*}
But on the event $\{\tau=k\mbox{\rm\ and }
\sup_{f\in\mathcal{Q}}|S_n(f)|<\alpha\}$, we have
$$
        2\alpha \le \sup_{f\in\mathcal{Q}}|S_k(f)|
        -\sup_{f\in\mathcal{Q}}|S_n(f)| \le
        \sup_{f\in\mathcal{Q}}|S_k(f)-S_n(f)|.
$$
Therefore, we can estimate
\begin{align*}
        &\mathbf{P}^\star\left[
        \max_{k=m,\ldots,n}
        \sup_{f\in\mathcal{Q}}|S_k(f)|
        \ge 3\alpha
        \right]  \\ 
        &\mbox{}\le
        \mathbf{P}^\star\left[
        \sup_{f\in\mathcal{Q}}|S_n(f)|\ge\alpha
        \right] + 
        \sum_{k=m}^{n}
        \mathbf{P}^\star\left[\tau=k\mbox{\rm\ and }
        \sup_{f\in\mathcal{Q}}|S_n(f)-S_k(f)|\ge 2\alpha
        \right] \\
        &\le
        \mathbf{P}^\star\left[
        \sup_{f\in\mathcal{Q}}|S_n(f)|\ge\alpha
        \right] + 
        \max_{k=m,\ldots,n}
        \mathbf{P}^\star\left[\sup_{f\in\mathcal{Q}}|S_n(f)-S_k(f)|\ge 2\alpha
        \right],
\end{align*}
where we have used that $\sup_{f\in\mathcal{Q}}|S_n(f)-S_k(f)|$ 
and $\{\tau=k\}$ are independent to obtain the last inequality.
The remainder of the proof is now easily completed.
\end{proof}

We can now complete the proof of Theorem \ref{thm:upperlil}.

\begin{proof}[Proof of Theorem \ref{thm:upperlil}]
By assumption, we have $f^\star\in\mathcal{M}_q^n$ for all $q\ge q^\star$
when $n$ is sufficiently large.  Then by Theorem \ref{thm:deviationbound},
we have for $n$ sufficiently large
$$
        \mathbf{P}^\star\left[
        \max_{n\le k\le 2n} \sup_{f\in\mathcal{M}_q^{2n}}
        \{\ell_k(f)-\ell_k(f^\star)\}\ge\alpha
        \right]\le
        C\,e^{-\alpha/C}
$$
for all $\alpha\ge C\eta(q)(1+\log K(2n))$ and $q\ge q^\star$.
Define
$$
	\Delta_k(q,q^\star) = 
        \sup_{f\in\mathcal{M}_q^k}
        \ell_k(f)-\sup_{f\in \mathcal{M}_{q^\star}^k}\ell_k(f).
$$
Using that $\mathcal{M}_q^k\subseteq\mathcal{M}_q^{2n}$ for
$n\le k\le 2n$ and $\ell_k(f^\star)\le\sup_{f\in \mathcal{M}_{q^\star}^k}
\ell_k(f)$, we have for $n$ sufficiently large
$$
        \mathbf{P}^\star\Bigg[
        \max_{n\le k\le 2n}
        \sup_{q\ge q^\star}
        \frac{1}{\eta(q)}
	\Delta_k(q,q^\star)
	\ge\alpha
        \Bigg] \le
        \sum_{q=q^\star}^\infty
        C\,e^{-\alpha\eta(q)/C}
$$
for all $\alpha\ge C(1+\log K(2n))$.  Let $\beta(n)$ be an increasing
function.  Then for all $n$ sufficiently large
$$
        \mathbf{P}^\star\left[
        \max_{2^n\le k\le 2^{n+1}}
        \frac{1}{\beta(k)}
        \sup_{q\ge q^\star}
        \frac{1}{\eta(q)}
	\Delta_k(q,q^\star)  
	\ge 2C
        \right]\le
        \frac{2C}{n^2},
$$
provided that
$\beta(2^n)\ge\log K(2^{n+1}) \vee \log\log 2^n$.  The proof is now
easily completed using the Borel-Cantelli lemma.
\end{proof}

\section{Proof of Theorem \ref{thm:lowerlil}}
\label{sec:prooflowerlil}

The proof of Theorem \ref{thm:lowerlil} is based on a sequence of 
auxiliary results.  First, we will need a compact law 
of iterated logarithm for the Strassen functional
$$
        I_n(g) = \frac{1}{\sqrt{2n\log\log n}}\sum_{i=1}^n
        \left\{g(X_i)-\mathbf{E}^\star(g(X_1))\right\}.
$$
We state the requisite result for future reference.

\begin{thm}
\label{thm:clil}
Let $\mathcal{Q}$ be a family of measurable functions from $E$
to $\mathbb{R}$ such that
$$
        \int_0^1\sqrt{\log\mathcal{N}(\mathcal{Q},u)}\,du<\infty.
$$
Then, $\mathbf{P}^\star$-a.s., the sequence $(I_n)_{n\ge 0}$ is relatively 
compact in $\ell_\infty(\mathcal{Q})$, and its set of cluster points 
coincides precisely with the set 
$\mathcal{K} = \{f\mapsto \langle f,g\rangle:g\in L^2_0(f^\star d\mu)\}$.
\end{thm}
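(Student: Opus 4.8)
My plan for proving Theorem~\ref{thm:clil} is to treat it as a function-indexed compact law of the iterated logarithm and to proceed in three stages: first a uniform bounded LIL together with an asymptotic equicontinuity estimate under the normalization $\sqrt{2n\log\log n}$ (the analytic core), then a soft deduction of relative compactness, and finally the identification of the cluster set by reduction to the classical finite-dimensional compact LIL. As a preliminary I would record that we may assume the bracketing functions lie in $L^2(f^\star d\mu)$, so that $\mathcal{Q}$ is totally bounded in $L^2(f^\star d\mu)$ and, using the brackets of width $1$, has a square-integrable envelope $D$. In particular each scalar sequence $I_n(f)$ obeys the Hartman--Wintner LIL, with cluster set $[-\|f-\mathbf{E}^\star f\|_2,\,\|f-\mathbf{E}^\star f\|_2]$, and $g\mapsto\langle g,\cdot\rangle$ is Lipschitz from $L^2(f^\star d\mu)$ into $\ell_\infty(\mathcal{Q})$, since $\|\langle g,\cdot\rangle-\langle g',\cdot\rangle\|_{\ell_\infty(\mathcal{Q})}\le\|D\|_2\,\|g-g'\|_2$.

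The main step is to show that (a) $\limsup_n\|I_n\|_{\ell_\infty(\mathcal{Q})}<\infty$ $\mathbf{P}^\star$-a.s., and (b) for every $\epsilon>0$ there is $\delta>0$ with $\limsup_n\sup\{|I_n(f)-I_n(f')|:f,f'\in\mathcal{Q},\ \|f-f'\|_2\le\delta\}\le\epsilon$ $\mathbf{P}^\star$-a.s. I would prove both by the same chaining argument run over geometric blocks $n\in(2^k,2^{k+1}]$, in the spirit of the proof of Theorem~\ref{thm:deviationbound} (peeling combined with a maximal inequality such as Lemma~\ref{lem:etemadi}). Writing $S_n(f)=\sqrt{2n\log\log n}\,I_n(f)=\sum_{i\le n}\{f(X_i)-\mathbf{E}^\star f\}$ and chaining along successively refined bracketing partitions of $\mathcal{Q}$ at scales $u_j=2^{-j}$, one bounds the oscillation of $S_n$ across brackets by the usual chaining sum; since the summands are unbounded the chaining increments must be truncated at levels adapted to the bracket widths (the Ossiander device, cf.\ \cite{vdG00,vdV98}), the truncated parts handled by Bernstein's inequality and a union bound over the $\mathcal{N}(\mathcal{Q},u_j)$ brackets, and the discarded tails by $D\in L^2$. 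The hypothesis $\int_0^1\sqrt{\log\mathcal{N}(\mathcal{Q},u)}\,du<\infty$ makes the resulting bound finite, and for (b) as small as $\epsilon$ once $\delta$ is small. The maximal inequality passes the estimate from the endpoints of each block to all $n\in(2^k,2^{k+1}]$, and since $\log\log 2^k\asymp\log k$ the exceedance probabilities are summable in $k$; Borel--Cantelli then gives (a) and (b). I expect this step to be the main obstacle: the interplay between the truncation level, the entropy integral, and the precise $\log\log n$ rate needed to close the Borel--Cantelli argument is delicate, and the unboundedness of $\mathcal{Q}$ rules out a naive sub-Gaussian estimate.

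Granting (a) and (b), relative compactness is soft. Fixing a countable $\|\cdot\|_2$-dense $\mathcal{Q}_0\subset\mathcal{Q}$, a diagonal extraction using scalar boundedness (Hartman--Wintner) produces from any subsequence of $(I_n)$ a further subsequence converging pointwise on $\mathcal{Q}_0$, and (b) together with total boundedness of $\mathcal{Q}$ (an Arzel\`a--Ascoli argument) upgrades this to convergence in $\ell_\infty(\mathcal{Q})$. To show every cluster point lies in $\mathcal{K}$: if $\phi$ is a cluster point then for each finite $F=\{f_1,\dots,f_m\}\subset\mathcal{Q}$ the vector $(\phi(f_j))_j$ is a cluster point of the $\mathbb{R}^m$-valued Strassen functional $\big((2n\log\log n)^{-1/2}\sum_{i\le n}(f_j(X_i)-\mathbf{E}^\star f_j)\big)_j$, whose cluster set, by the classical finite-dimensional compact LIL, is exactly the ellipsoid $\{(\langle g,f_j\rangle)_j:g\in L^2_0(f^\star d\mu)\}$; this identity is a short computation with the covariance matrix $\Sigma_{jk}=\langle f_j-\mathbf{E}^\star f_j,\,f_k-\mathbf{E}^\star f_k\rangle$ and the orthogonal projection onto $\mathrm{span}\{f_j-\mathbf{E}^\star f_j\}$. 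Hence the sets $G_F=\{g\in L^2_0(f^\star d\mu):\langle g,f_j\rangle=\phi(f_j)\text{ for }j=1,\dots,m\}$ are nonempty, closed, bounded and convex, hence weakly compact, and they have the finite-intersection property; any $g\in\bigcap_F G_F$ then witnesses $\phi=\langle g,\cdot\rangle\in\mathcal{K}$.

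For the reverse inclusion $\mathcal{K}\subseteq$ (cluster set), I would use that $\mathcal{K}$ is norm-separable in $\ell_\infty(\mathcal{Q})$ (as $\mathcal{Q}$ is totally bounded), that $g\mapsto\langle g,\cdot\rangle$ is continuous, and that the cluster set is closed, so that it suffices to show $\langle g,\cdot\rangle$ is a cluster point for each $g$ in a suitable countable subset of $\{g\in L^2_0(f^\star d\mu):\|g\|_2<1\}$. Fixing such a $g$ and $\epsilon>0$, choose $\delta\le\epsilon$ as in (b) and a finite $\delta$-net $\mathcal{Q}_\epsilon$ of $\mathcal{Q}$. Since $g\in L^2_0(f^\star d\mu)$, the vector $(\langle g,f\rangle)_{f\in\mathcal{Q}_\epsilon}$ lies in the ellipsoid that is the cluster set of the finite-dimensional Strassen functional indexed by $\mathcal{Q}_\epsilon$, so along some random subsequence $n_l$ one has $\max_{f\in\mathcal{Q}_\epsilon}|I_{n_l}(f)-\langle g,f\rangle|\to0$. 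Combining this with (b), the $\delta$-net property, and $|\langle g,f\rangle-\langle g,f'\rangle|\le\|g\|_2\|f-f'\|_2\le\delta$ yields $\limsup_l\|I_{n_l}-\langle g,\cdot\rangle\|_{\ell_\infty(\mathcal{Q})}\le2\epsilon$, so by relative compactness some sub-subsequence converges to a cluster point within $2\epsilon$ of $\langle g,\cdot\rangle$. Letting $\epsilon=1/m\downarrow0$ and using closedness of the cluster set shows $\langle g,\cdot\rangle$ is itself a cluster point, which completes the plan.
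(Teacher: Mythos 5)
The paper does not actually prove this theorem: it is quoted from the literature, with the proof attributed to \cite[Theorem 4.2]{Oss87} and \cite[Theorem 9]{LT89}. Your proposal is therefore necessarily a different route --- a self-contained reconstruction --- and its architecture is the standard ``direct'' proof of the compact LIL for bracketing classes. The soft steps are handled correctly: Arzel\`a--Ascoli over a countable dense $\mathcal{Q}_0$ for relative compactness; identification of every cluster point as an element of $\mathcal{K}$ via finite-dimensional Strassen LILs plus the finite-intersection property of the weakly compact convex sets $G_F$; and the reverse inclusion via finite nets, equicontinuity, and closedness of the cluster set. (One small repair: the finite-dimensional LIL holds a.s.\ only for countably many index sets at a time, so the sets $F$ and the $\delta$-nets must be drawn from a fixed countable dense $\mathcal{Q}_0$, the general case following by continuity of the cluster point $\phi$ and of $g\mapsto\langle g,\cdot\rangle$.) All of the real content sits in your step (a)/(b), which you only outline; the outline (dyadic blocking, an Etemadi-type maximal inequality, Ossiander's adaptive truncation of the chaining links, Bernstein plus a union bound over brackets, Borel--Cantelli made summable by the $\log\log$ normalization) is the right one and does close, but executing it is essentially reproving the cited theorems. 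The route in \cite{LT89} is structurally different: one verifies that the class is pregaussian/Donsker with an envelope satisfying the requisite moment condition and then invokes the general Banach-space characterization of the compact LIL together with Kuelbs' identification of the cluster set as the unit ball of the reproducing kernel Hilbert space, which coincides with $\mathcal{K}$ since $L^2_0(f^\star d\mu)$ is the projection of the $L^2$ unit ball onto the orthogonal complement of the constants. Your approach buys self-containedness and makes the role of the entropy integral explicit; the literature's approach buys brevity at the price of importing the general LIL theory in Banach spaces.
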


Proofs of this result can be found in \cite[Theorem 4.2]{Oss87} or in 
\cite[Theorem 9]{LT89}.  We will also need the following simple
well-known fact, whose proof is omitted.

\begin{lem}
\label{lem:maximal}
Let $(X_i)_{i\ge 1}$ be an i.i.d.\ sequence such that
$\mathbf{E}[|X_1|^p]<\infty$.  Then 
$n^{-1/p}\max_{i=1,\ldots,n}|X_i|\to 0$ a.s.
\end{lem}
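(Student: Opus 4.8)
The plan is to deduce the result from the Borel--Cantelli lemma together with the elementary fact that for a nonnegative random variable $Y$ one has $\mathbf{E}[Y]<\infty$ if and only if $\sum_{n\ge 1}\mathbf{P}[Y\ge n]<\infty$ (this follows at once from $\mathbf{E}[\lfloor Y\rfloor]=\sum_{n\ge1}\mathbf{P}[Y\ge n]$ and $\lfloor Y\rfloor\le Y\le\lfloor Y\rfloor+1$). Since $n^{-1/p}\max_{i\le n}|X_i|\ge 0$, it suffices to show that for every fixed $\varepsilon>0$ we have $\limsup_{n\to\infty}n^{-1/p}\max_{i\le n}|X_i|\le\varepsilon$ a.s.; intersecting these probability-one events over $\varepsilon\in\{1/k:k\ge1\}$ then gives the claim.

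So fix $\varepsilon>0$. Applying the elementary fact above to $Y=|X_1|^p/\varepsilon^p$, which has finite mean by hypothesis, and using that the $X_i$ are identically distributed, we obtain
$$
\sum_{n\ge 1}\mathbf{P}\big[|X_n|\ge\varepsilon n^{1/p}\big]
=\sum_{n\ge 1}\mathbf{P}\big[|X_1|^p/\varepsilon^p\ge n\big]<\infty .
$$
By Borel--Cantelli, a.s.\ only finitely many indices $n$ satisfy $|X_n|\ge\varepsilon n^{1/p}$, so there is a finite (random) $N$ with $|X_n|<\varepsilon n^{1/p}$ for all $n\ge N$. For $n\ge N$ we then bound
$$
\max_{i\le n}|X_i|\le\Big(\max_{i<N}|X_i|\Big)\vee\Big(\max_{N\le i\le n}|X_i|\Big)
\le\Big(\max_{i<N}|X_i|\Big)\vee\varepsilon n^{1/p},
$$
where the last step uses that $i\mapsto\varepsilon i^{1/p}$ is increasing. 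Dividing by $n^{1/p}$ and letting $n\to\infty$, the contribution of the fixed finite constant $\max_{i<N}|X_i|$ vanishes, leaving $\limsup_{n\to\infty}n^{-1/p}\max_{i\le n}|X_i|\le\varepsilon$ a.s., as required.

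This argument is entirely routine and presents no real obstacle; the result is classical. The only point meriting a moment's attention is that one cannot reason term by term---although each individual $|X_i|$ is eventually smaller than $\varepsilon n^{1/p}$, the maximum ranges over an index set that grows with $n$---and this is exactly what the Borel--Cantelli step resolves, by producing a bound valid simultaneously for all sufficiently large indices, after which the index set splits harmlessly into a fixed initial block and a controlled tail.
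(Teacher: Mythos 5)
Your proof is correct, and it is the standard argument: the paper explicitly omits the proof of this lemma (calling it a ``simple well-known fact''), so there is no internal proof to compare against. Your Borel--Cantelli argument via the tail-sum characterization of $\mathbf{E}[|X_1|^p]<\infty$, followed by the split of $\max_{i\le n}|X_i|$ into a fixed initial block and a controlled tail, is exactly the canonical way to establish it.
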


Finally, we will need the following likelihood inequality that relates 
the log-likelihood ratio $\ell_n(f)-\ell_n(f^\star)$ to the empirical 
process.  Related inequalities appear in \cite{Gas02,LiuShao03,Cha06}, 
but the following form is perhaps the most natural.

\begin{lem}
\label{lem:likineq}
For any probability density $f\ne f^\star$
$$
        \ell_n(f)-\ell_n(f^\star) \le 
        |\nu_n(d_f)|^2,
$$
where $\nu_n(g) = n^{-1/2}\sum_{k=1}^n\{g(X_k)-\mathbf{E}^\star[g(X_k)]\}$.
\end{lem}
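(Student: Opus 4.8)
The plan is to reduce the log-likelihood ratio to a linear functional of the data via an elementary pointwise inequality, and then to recognize the resulting sum as a centered empirical process plus a favorable (negative) drift that cancels after completing the square.

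First I would apply the bound $\log x \le 2(\sqrt{x}-1)$, valid for all $x>0$ (itself immediate from $\log y \le y-1$ applied to $y=\sqrt{x}$), pointwise with $x = f(X_i)/f^\star(X_i)$. This gives
$$
\ell_n(f)-\ell_n(f^\star) = \sum_{i=1}^n \log\frac{f(X_i)}{f^\star(X_i)}
\le 2\sum_{i=1}^n\left(\sqrt{\frac{f(X_i)}{f^\star(X_i)}}-1\right)
= 2\,h(f,f^\star)\sum_{i=1}^n d_f(X_i),
$$
using the definition of $d_f$; here $h(f,f^\star)>0$ since $f\ne f^\star$ and $h$ is a metric, so the division is legitimate.

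Next I would compute the mean $\mathbf{E}^\star[d_f(X_1)] = \langle 1, d_f\rangle$. Writing $h=h(f,f^\star)$, we have $\langle 1,d_f\rangle = h^{-1}\big(\int\sqrt{ff^\star}\,d\mu - 1\big) = -h/2$, where the Hellinger affinity identity $\int\sqrt{ff^\star}\,d\mu = 1 - h^2/2$ follows by expanding $h^2 = \int(\sqrt{f}-\sqrt{f^\star})^2\,d\mu$ and using that $f$, $f^\star$ are densities. Hence $\sum_{i=1}^n d_f(X_i) = -nh/2 + \sqrt{n}\,\nu_n(d_f)$, and substituting back yields
$$
\ell_n(f)-\ell_n(f^\star) \le -nh^2 + 2h\sqrt{n}\,\nu_n(d_f).
$$
Finally I would complete the square: from $(\sqrt{n}\,h - \nu_n(d_f))^2 \ge 0$ we get $2h\sqrt{n}\,\nu_n(d_f) \le nh^2 + |\nu_n(d_f)|^2$, so the $nh^2$ terms cancel and the claimed bound $\ell_n(f)-\ell_n(f^\star)\le |\nu_n(d_f)|^2$ follows.

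There is no genuine obstacle here — every step is elementary — and the only point that requires a moment's care is the evaluation of $\mathbf{E}^\star[d_f(X_1)]$ via the Hellinger affinity, which is exactly what produces the negative drift $-nh^2$ that makes the final cancellation work; choosing the weight $h(f,f^\star)$ in the denominator of $d_f$ is what calibrates this cancellation so cleanly.
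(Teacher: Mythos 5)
Your proof is correct and takes essentially the same route as the paper's: your pointwise bound $\log x\le 2(\sqrt{x}-1)$ is the paper's $\log(1+x)\le x$ applied to $\sqrt{f/f^\star}=1+h(f,f^\star)\,d_f$, and the evaluation $\mathbf{E}^\star[d_f(X_1)]=-h(f,f^\star)/2$ via the Hellinger affinity is exactly the identity the paper records at the start of its proof. Your final completion of the square is just the paper's optimization $\sup_{p\in\mathbb{R}}\{2\,\nu_n(d_f)\,p-p^2\}=\nu_n(d_f)^2$ carried out explicitly.
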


\begin{proof}
Note that
$$
        h(f,f^\star)^2 = 2 - \int 2\sqrt{ff^\star}\,d\mu
        = -2\,h(f,f^\star)\,\mathbf{E}^\star(d_f(X_1)).
$$
Using $\log(1+x)\le x$, we can estimate
\begin{align*}
        \ell_n(f)-\ell_n(f^\star) 
        &=   \sum_{i=1}^n 2\,\log(1+h(f,f^\star)\,d_f(X_i)) \\
        &\le \sum_{i=1}^n 2\,h(f,f^\star)\,d_f(X_i) \\
        & =
        2\,\nu_n(d_f)\,h(f,f^\star)\,\sqrt{n} - h(f,f^\star)^2\,n \\
        &\le \sup_{p\in\mathbb{R}}\left\{2\,\nu_n(d_f)\,p - p^2
        \right\}.
\end{align*}
The proof is easily completed.
\end{proof}

We can now obtain the following asymptotic expansion of the 
log-likelihood, which provides a pathwise counterpart to the
weak convergence theory in \cite{Gas02,LiuShao03}.

\begin{prop}
\label{prop:asympexp}
Let $q\ge q^\star$.  Assume that
$$
        \int_0^1 \sqrt{\log\mathcal{N}(\mathcal{D}_q,u)}\,du<\infty.
$$
Moreover, suppose that $|d|\le D$ for all $d\in\mathcal{D}_q$ with
$D\in L^{2+\alpha}(f^\star d\mu)$ for some $\alpha>0$.
Then
\begin{multline*}
        \sup_{f\in\mathcal{M}_q(4\sqrt{\log\log n/n})}
        \bigg\{
        2\,I_n(d_f)\,h(f,f^\star)\,\sqrt{\frac{2n}{\log\log n}} 
        - h(f,f^\star)^2\,\frac{2n}{\log\log n}\bigg\} \\
        -\frac{1}{\log\log n}\left\{
        \sup_{f\in\mathcal{M}_q}\ell_n(f)-
        \ell_n(f^\star)\right\}
        \xrightarrow{n\to\infty}0\quad\mathbf{P}^\star\mbox{\rm-a.s.},
\end{multline*}
where we have defined $\mathcal{M}_q(\varepsilon)=
\{f\in\mathcal{M}_q:h(f,f^\star)\le\varepsilon\}$.
\end{prop}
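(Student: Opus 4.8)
The plan is to sandwich the normalized log-likelihood $\frac{1}{\log\log n}\{\sup_{f\in\mathcal{M}_q}\ell_n(f)-\ell_n(f^\star)\}$ between the variational expression appearing in the statement and show both bounds converge to it. First I would establish the \emph{upper bound}. By Lemma \ref{lem:likineq} we have $\ell_n(f)-\ell_n(f^\star)\le|\nu_n(d_f)|^2$ for each $f$, and in fact the sharper intermediate estimate in that proof gives $\ell_n(f)-\ell_n(f^\star)\le 2\nu_n(d_f)h(f,f^\star)\sqrt n-h(f,f^\star)^2 n$. Rewriting $\nu_n(d_f)=\sqrt{2\log\log n}\,I_n(d_f)$ and dividing by $\log\log n$, this is exactly the summand in the displayed supremum. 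It therefore remains to show that the $\sup$ over $f\in\mathcal{M}_q$ may be restricted to the Hellinger ball of radius $4\sqrt{\log\log n/n}$: if $h(f,f^\star)>4\sqrt{\log\log n/n}$, then since $|I_n(d_f)|\le\sup_{d\in\mathcal{D}_q}|I_n(d)|$ is bounded (by the compact LIL, Theorem \ref{thm:clil}, using the entropy hypothesis and $|d|\le D\in L^{2+\alpha}\subset L^2$), for $n$ large the quadratic term $-h(f,f^\star)^2\cdot 2n/\log\log n$ dominates and the bracket becomes negative, hence such $f$ cannot achieve the supremum (which is $\ge 0$ by taking $f=f^\star$, or rather approached by $f\to f^\star$). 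Actually more care is needed since $f^\star$ itself is a limit point; I would note $\sup_{f\in\mathcal M_q}\ell_n(f)\ge\ell_n(f^\star)$ because $f^\star\in\mathcal M_{q^\star}\subseteq\mathcal M_q$, so the supremum on the left of the display is nonnegative and the restriction to the small ball is legitimate.

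For the \emph{lower bound} I would run the argument in reverse, using a first-order expansion of $\log(1+x)$ with a quantitative remainder. Write $\ell_n(f)-\ell_n(f^\star)=\sum_i 2\log(1+h(f,f^\star)d_f(X_i))$. On the Hellinger ball $h(f,f^\star)\le 4\sqrt{\log\log n/n}$ we have $|h(f,f^\star)d_f(X_i)|\le 4\sqrt{\log\log n/n}\,D(X_i)$, and by Lemma \ref{lem:maximal} applied with $p=2+\alpha$ we get $\max_{i\le n}D(X_i)=o(n^{1/(2+\alpha)})$, so $h(f,f^\star)\max_i D(X_i)=o(\sqrt{\log\log n/n}\cdot n^{1/(2+\alpha)})\to 0$ uniformly over the ball. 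Using $\log(1+x)\ge x-x^2$ for $|x|$ small, we obtain
\begin{align*}
\ell_n(f)-\ell_n(f^\star)
&\ge 2\nu_n(d_f)h(f,f^\star)\sqrt n - h(f,f^\star)^2 n
- 2h(f,f^\star)^2\sum_{i=1}^n d_f(X_i)^2.
\end{align*}
The error term is controlled by $2h(f,f^\star)^2\cdot n\cdot\frac1n\sum_i D(X_i)^2$; since $D\in L^2$ the law of large numbers gives $\frac1n\sum_i D(X_i)^2\to\|D\|_2^2$ a.s., so the error is $O(h(f,f^\star)^2 n)=O(\log\log n)$ — unfortunately \emph{not} $o(\log\log n)$. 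To fix this I would instead shrink the ball: replace $4\sqrt{\log\log n/n}$ by $a_n\sqrt{\log\log n/n}$ with $a_n\to\infty$ slowly, or rather observe that the supremum in the target expression is attained (in the limit) at $f$ with $h(f,f^\star)$ of order exactly $\sqrt{\log\log n/n}$ times a bounded factor, and on a \emph{sub-ball} $h(f,f^\star)\le\epsilon_n\sqrt{\log\log n/n}$ the error is $O(\epsilon_n^2\log\log n)=o(\log\log n)$ while we can still find $f$ nearly attaining the supremum. More cleanly: I would split $\mathcal M_q(4\sqrt{\log\log n/n})$ according to whether $h(f,f^\star)\le\epsilon\sqrt{\log\log n/n}$ or not, handle the inner part by the quadratic error estimate (error $O(\epsilon^2\log\log n)$), handle the outer annulus by noting the bracket is maximized in $h$ at $h^\star=I_n(d_f)\sqrt{\log\log n/2n}$ which lies in the inner ball once $I_n$ is uniformly bounded, then let $\epsilon\downarrow 0$.

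The \textbf{main obstacle} is precisely this quadratic-remainder control in the lower bound: the naive bound $\log(1+x)\ge x-x^2$ produces an error of the same order $\log\log n$ as the main term, and one must exploit the fact that the optimizing densities lie much closer to $f^\star$ than the full radius $4\sqrt{\log\log n/n}$ — the factor $4$ is a safety margin for the \emph{upper} bound (to be sure of capturing the maximum), but the maximizer sits at radius $\asymp\sqrt{\log\log n/n}$ with a \emph{bounded} constant, and on that scale the remainder is genuinely negligible. Once both inequalities are in place with matching limits, the a.s.\ convergence to $0$ of the difference follows, and the identification of the limit with the variational quantity is then a separate step carried out in the proof of Theorem \ref{thm:lowerlil} proper (via the compact LIL, Theorem \ref{thm:clil}, together with the accessibility structure of $\mathcal{\bar D}_q^c$). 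I would also remark that uniformity of the empirical-process terms over $f$ (needed to interchange $\sup_f$ with the a.s.\ statement) is supplied entirely by the bracketing-entropy hypothesis $\int_0^1\sqrt{\log\mathcal{N}(\mathcal{D}_q,u)}\,du<\infty$ through Theorem \ref{thm:clil}.
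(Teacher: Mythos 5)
There is a genuine gap, and it is visible already in your upper bound. The concavity estimate $\ell_n(f)-\ell_n(f^\star)\le 2\,\nu_n(d_f)\,h(f,f^\star)\sqrt n-h(f,f^\star)^2\,n$ does \emph{not} become "exactly the summand in the displayed supremum" after normalization: it yields $2\,I_n(d_f)\,h\sqrt{2n/\log\log n}-h^2\,\tfrac{n}{\log\log n}$, whereas the target has quadratic coefficient $\tfrac{2n}{\log\log n}$. Optimizing over $h$ shows the two expressions differ by a factor of $2$ in the end ($2\sup(I_n(d))_+^2$ versus $\sup(I_n(d))_+^2$), so the first-order bound cannot close the upper half of the sandwich. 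The missing $-nh^2$ comes from the second-order Taylor term: since $\mathbf{E}^\star[d_f(X_1)^2]=\|d_f\|_2^2=1$, the term $-h^2\sum_{i=1}^n d_f(X_i)^2$ in the exact expansion $2\log(1+x)=2x-x^2+x^2R(x)$ contributes a \emph{deterministic leading-order} quantity $-nh^2(1+o(1))$ (uniformly over $\mathcal{D}_q$, because $\{d^2:d\in\mathcal{D}_q\}$ is Glivenko--Cantelli under the bracketing hypothesis), and this is precisely what turns $-nh^2$ into $-2nh^2$. The paper's proof therefore keeps the quadratic term exactly, centers it using $\mathbf{E}^\star[d_f^2]=1$, and only discards the genuinely higher-order remainder $x^2R(x)$, which is controlled via Lemma \ref{lem:maximal} and $D\in L^{2+\alpha}$ exactly as you describe.

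Your diagnosis of the "main obstacle" in the lower bound is consequently off target. The term $2h^2\sum_i d_f(X_i)^2$ is not an error to be made $o(\log\log n)$; half of it is part of the answer. Your proposed repair --- restricting to a sub-ball $h\le\epsilon\sqrt{\log\log n/n}$ and letting $\epsilon\downarrow 0$ --- fails because, as you yourself compute, the maximizer sits at $h^\star=(I_n(d_f))_+\sqrt{\log\log n/2n}$, and since $\sup_d|I_n(d)|\to 1$ (not $0$) by the compact LIL, $h^\star$ is of order $\sqrt{\log\log n/n}$ with an order-one constant and does \emph{not} lie in the inner ball for small $\epsilon$. On that scale the quadratic empirical term is genuinely of order $\log\log n$ and must be evaluated, not bounded. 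Your localization step (restricting the supremum to the Hellinger ball of radius $4\sqrt{\log\log n/n}$) does match the paper's Step 1 in spirit, and your treatment of the Taylor remainder via $\max_{i\le n}D(X_i)=o(n^{1/(2+\alpha)})$ is the right ingredient for the true error term; but without the exact second-order expansion and the identity $\mathbf{E}^\star[d_f^2]=1$ the two halves of your sandwich do not meet.
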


\begin{proof}
We proceed in several steps.

\textbf{Step 1} (\emph{localization}).
As $q\ge q^\star$ (hence $f^\star\in\mathcal{M}_q$), clearly
$$
        \sup_{f\in\mathcal{M}_q}\ell_n(f)-\ell_n(f^\star)
        = 
        \sup_{f\in\mathcal{M}_q:\ell_n(f)-\ell_n(f^\star)\ge 0}
        \left\{\ell_n(f)-\ell_n(f^\star)\right\}.
$$
Now note that, as in the proof of Lemma \ref{lem:likineq},
$$
        \ell_n(f)-\ell_n(f^\star) \le
        2\,\nu_n(d_f)\,h(f,f^\star)\,\sqrt{n} - h(f,f^\star)^2\,n.
$$
Therefore, we can estimate
\begin{align*}
        &
        \sup_{f\in\mathcal{M}_q:\ell_n(f)-\ell_n(f^\star)\ge 0}
        h(f,f^\star) 
        \\ &\qquad\mbox{}\le
        \sup_{f\in\mathcal{M}_q:\ell_n(f)-\ell_n(f^\star)\ge 0}
        \left\{
        h(f,f^\star) +
        \frac{\ell_n(f)-\ell_n(f^\star)}{n\,h(f,f^\star)}\right\}
        \\ &\qquad\mbox{}\le
        \frac{2}{\sqrt{n}}
        \sup_{f\in\mathcal{M}_q:\ell_n(f)-\ell_n(f^\star)\ge 0}
        \nu_n(d_f)
        \\ &\qquad\mbox{}\le \sqrt{\frac{8\log\log n}{n}}
        \sup_{d\in\mathcal{D}_q}I_n(d).
\end{align*}
Now note that we can estimate
$$
        \sup_{d\in\mathcal{D}_q}I_n(d) \le 
        \inf_{g\in L^2_0(f^\star d\mu)}
        \sup_{d\in\mathcal{D}_q}|I_n(d) - \langle d,g\rangle|
        + \sup_{d\in\mathcal{D}_q}\sup_{g\in L^2_0(f^\star d\mu)}
        \langle d,g\rangle.
$$
The first term on the right converges to zero $\mathbf{P}^\star$-a.s.\ as 
$n\to\infty$ by Theorem \ref{thm:clil}, while the second term is easily
seen to equal $\sup_{d\in\mathcal{D}_q}\|d-\langle 1,d\rangle\|_2\le 1$.  
Therefore
$$
        \sup_{f\in\mathcal{M}_q:\ell_n(f)-\ell_n(f^\star)\ge 0}
        h(f,f^\star) 
        \le (1+\varepsilon)\sqrt{\frac{8\log\log n}{n}}
$$
eventually as $n\to\infty$ $\mathbf{P}^\star$-a.s.\
for any $\varepsilon>0$.  In particular,
$$
        \{f\in\mathcal{M}_q:\ell_n(f)-\ell_n(f^\star)\ge 0\}
        \subseteq
        \left\{f\in\mathcal{M}_q:h(f,f^\star)\le 
        4\sqrt{\log\log n/n}\right\}
$$
eventually as $n\to\infty$ $\mathbf{P}^\star$-a.s.
This implies that 
$$
        \sup_{f\in\mathcal{M}_q}\ell_n(f)-\ell_n(f^\star) 
        \le
        \sup_{f\in\mathcal{M}_q:h(f,f^\star)\le 4\sqrt{\log\log n/n}}
        \left\{\ell_n(f)-\ell_n(f^\star)\right\}
$$
eventually as $n\to\infty$ $\mathbf{P}^\star$-a.s.
But the reverse inequality clearly holds for all $n\ge 0$,
so that in fact
$$
        \sup_{f\in\mathcal{M}_q}\ell_n(f)-\ell_n(f^\star)
        =
        \sup_{f\in\mathcal{M}_q(4\sqrt{\log\log n/n})}
        \left\{\ell_n(f)-\ell_n(f^\star)\right\}
$$
eventually as $n\to\infty$ $\mathbf{P}^\star$-a.s.

\textbf{Step 2} (\emph{Taylor expansion}).
Taylor expansion gives $2\log(1+x)=2x-x^2+x^2R(x)$, where $R(x)\to 0$ as
$x\to 0$.  Thus we can write, for any $f\in\mathcal{M}_q$,
\begin{multline*}
        \ell_n(f)-\ell_n(f^\star)
        = \sum_{i=1}^n 2\,\log(1+h(f,f^\star)\,d_f(X_i)) =
        \mbox{} \\
        2\,h(f,f^\star)\sum_{i=1}^n \left\{d_f(X_i)
        +\frac{1}{2}\,h(f,f^\star)\right\}
        -h(f,f^\star)^2\sum_{i=1}^n (d_f(X_i))^2 
        -n\,h(f,f^\star)^2 \\
        \mbox{}
        +h(f,f^\star)^2\sum_{i=1}^n (d_f(X_i))^2
        R(h(f,f^\star)\,d_f(X_i)).
\end{multline*}
Using that $\mathbf{E}^\star(d_f(X_1))=-h(f,f^\star)/2$, we therefore have
\begin{multline*}
        \frac{1}{\log\log n}\left\{
        \ell_n(f)-\ell_n(f^\star)\right\} = \mbox{}\\
        R_{f,n}\,\frac{n\,h(f,f^\star)^2}{\log\log n}
        + 2\,I_n(d_f)\,h(f,f^\star)\,\sqrt{\frac{2n}{\log\log n}}
        - h(f,f^\star)^2\,\frac{2n}{\log\log n}
\end{multline*}
where we have defined
$$
        R_{f,n} = \frac{1}{n}\sum_{i=1}^n \{1-(d_f(X_i))^2\} 
        +\frac{1}{n}\sum_{i=1}^n (d_f(X_i))^2
        R(h(f,f^\star)\,d_f(X_i)).
$$
It follows easily that
\begin{align*}
        &\Bigg|\sup_{f\in\mathcal{M}_q(4\sqrt{\log\log n/n})}
        \Bigg\{
        2\,I_n(d_f)\,h(f,f^\star)\,\sqrt{\frac{2n}{\log\log n}}
        - h(f,f^\star)^2\,\frac{2n}{\log\log n}\Bigg\} 
\\ &\qquad\qquad\mbox{}
        -\frac{1}{\log\log n}\left\{
        \sup_{f\in\mathcal{M}_q}\ell_n(f)-
        \ell_n(f^\star)\right\}\Bigg|
\\ &\quad\mbox{}
        \le
        \sup_{f\in\mathcal{M}_q(4\sqrt{\log\log n/n})}
        |R_{f,n}|\,\frac{n\,h(f,f^\star)^2}{\log\log n}
\\ &\quad\mbox{}
        \le
        16\sup_{f\in\mathcal{M}_q(4\sqrt{\log\log n/n})}|R_{f,n}|
\end{align*}
eventually as $n\to\infty$ $\mathbf{P}^\star$-a.s.

\textbf{Step 3} (\emph{end of proof}).  We can easily estimate
\begin{multline*}
        \sup_{f\in\mathcal{M}_q(4\sqrt{\log\log n/n})}|R_{f,n}| \le
        \sup_{f\in\mathcal{M}_q}
        \left|\frac{1}{n}\sum_{i=1}^n \{d_f(X_i)^2-1\}\right|
        \\ \mbox{}
        +
        \Bigg(
        \sup_{|x|\le 4\sqrt{\log\log n/n}\max_{i\le n}D(X_i)}
        |R(x)|\Bigg)\,
        \frac{1}{n}\sum_{i=1}^n (D(X_i))^2.
\end{multline*}
As $\mathcal{N}(\mathcal{D}_q,\delta)<\infty$ for every $\delta>0$,
the class $\{d^2:d\in\mathcal{D}_q\}$ can be covered by a finite number of 
brackets with arbitrary small $L^1(f^\star d\mu)$-norm and is therefore 
$\mathbf{P}^\star$-Glivenko-Cantelli.  Moreover, by construction 
$\mathbf{E}^\star[(d_f(X_i))^2]=1$ for all $f\in\mathcal{M}_q$.  
Therefore, the first term in this expression converges to zero as 
$n\to\infty$ $\mathbf{P}^\star$-a.s.  On the other hand, by Lemma 
\ref{lem:maximal} and the fact that $D\in L^{2+\alpha}(f^\star d\mu)$, 
we have $\mathbf{P}^\star$-a.s.
$$
        \sqrt{\log\log n/n}\,\max_{i=1,\ldots,n}D(X_i) = 
        \frac{\sqrt{\log\log n}}{n^{\alpha/2(2+\alpha)}}\,
        n^{-1/(2+\alpha)}\max_{i=1,\ldots,n}D(X_i) 
        \xrightarrow{n\to\infty}0.
$$
Therefore the second term converges to zero also, and the proof is 
evidently complete.
\end{proof}

\begin{prop}
\label{prop:asympexp1}
Let $q\ge q^\star$.  Assume that
$$
        \int_0^1 \sqrt{\log\mathcal{N}(\mathcal{D}_q,u)}\,du<\infty.
$$
Moreover, suppose that $|d|\le D$ for all $d\in\mathcal{D}_q$ with
$D\in L^{2+\alpha}(f^\star d\mu)$ for some $\alpha>0$.
Then
$$
        \liminf_{n\to\infty}\left\{
        \sup_{d\in\mathcal{\bar D}_q}(I_n(d))_+^2
        -\frac{1}{\log\log n}\left\{
        \sup_{f\in\mathcal{M}_q}\ell_n(f)-
        \ell_n(f^\star)\right\}\right\}
$$
is nonnegative $\mathbf{P}^\star$-a.s.
\end{prop}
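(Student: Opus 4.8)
The plan is to combine the pathwise asymptotic expansion of Proposition \ref{prop:asympexp} with an elementary optimization and a geometric comparison between the local weighted classes $\mathcal{D}_q(\varepsilon)$ and the limit set $\mathcal{\bar D}_q$, the latter carried out via Strassen's compact law of iterated logarithm (Theorem \ref{thm:clil}). Write $\varepsilon_n=4\sqrt{\log\log n/n}$, $L_n=(\log\log n)^{-1}\{\sup_{f\in\mathcal{M}_q}\ell_n(f)-\ell_n(f^\star)\}$, $A_n=\sup_{d\in\mathcal{D}_q(\varepsilon_n)}(I_n(d))_+^2$ and $B_n=\sup_{d\in\mathcal{\bar D}_q}(I_n(d))_+^2$. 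The first step is purely algebraic: in the supremum in Proposition \ref{prop:asympexp}, put $\lambda_f=h(f,f^\star)\sqrt{2n/\log\log n}\ge 0$ and use $2\,I_n(d_f)\,\lambda_f-\lambda_f^2\le\sup_{\lambda\ge 0}\{2\,I_n(d_f)\,\lambda-\lambda^2\}=(I_n(d_f))_+^2$, simply dropping the (harmless) upper constraint $\lambda_f\le 4\sqrt2$. Since $\{d_f:f\in\mathcal{M}_q,\,0<h(f,f^\star)\le\varepsilon_n\}=\mathcal{D}_q(\varepsilon_n)$, Proposition \ref{prop:asympexp} then gives $L_n\le A_n+o(1)$ eventually $\mathbf{P}^\star$-a.s. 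It therefore suffices to prove $\limsup_{n\to\infty}(A_n-B_n)\le 0$ $\mathbf{P}^\star$-a.s.

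For this I would apply Theorem \ref{thm:clil} not to $\mathcal{D}_q$ but to the enlarged class $\mathcal{Q}=\mathcal{D}_q\cup\mathcal{\bar D}_q$. Finiteness of all bracketing numbers makes $\mathcal{D}_q$ totally bounded in $L^2(f^\star d\mu)$, and since $\mathcal{\bar D}_q\subseteq\cl\mathcal{D}_q$, a short bookkeeping argument --- a pointwise bracket of $L^2$-width $u$ for $\mathcal{D}_q$ also brackets any $L^2$-limit of a subsequence lying in a single bracket --- yields $\mathcal{N}(\mathcal{Q},u)\le 2\,\mathcal{N}(\mathcal{D}_q,u)$, so the entropy integral for $\mathcal{Q}$ is still finite (and the common envelope $D$ dominates every $d\in\mathcal{\bar D}_q$). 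Hence $\mathbf{P}^\star$-a.s.\ the sequence $(I_n)$ is relatively compact in $\ell_\infty(\mathcal{Q})$ with cluster set $\mathcal{K}$, so $\mathrm{dist}(I_n,\mathcal{K})\to 0$ there; that is, for every $\delta>0$ there is eventually $g_n\in L^2_0(f^\star d\mu)$ with $\sup_{d\in\mathcal{Q}}|I_n(d)-\langle d,g_n\rangle|<\delta$. Replacing $I_n$ by the $L^2$-continuous functional $\langle\,\cdot\,,g_n\rangle$ on both classes, this gives $A_n^{1/2}\le\sup_{d\in\mathcal{D}_q(\varepsilon_n)}(\langle d,g_n\rangle)_+ +\delta$ and $\sup_{d\in\mathcal{\bar D}_q}(\langle d,g_n\rangle)_+\le B_n^{1/2}+\delta$, while $B_n^{1/2}\le 1+\delta$ because $\|g_n\|_2\le 1$ and $\|d\|_2=1$ for $d\in\mathcal{\bar D}_q$.

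It remains to establish the deterministic geometric fact
$$\rho(\varepsilon):=\sup_{g\in L^2_0(f^\star d\mu)}\left\{\sup_{d\in\mathcal{D}_q(\varepsilon)}(\langle d,g\rangle)_+ -\sup_{d\in\mathcal{\bar D}_q}(\langle d,g\rangle)_+\right\}\longrightarrow 0\quad\text{as }\varepsilon\to 0.$$
I would prove this by a compactness contradiction: if $\rho(\varepsilon_m)\ge\delta_0>0$ along some $\varepsilon_m\downarrow 0$, choose near-optimal $g_m\in L^2_0(f^\star d\mu)$ and $d_m\in\mathcal{D}_q(\varepsilon_m)$; since the unit ball of $L^2(f^\star d\mu)$ is weakly compact and $\cl\mathcal{D}_q$ is $L^2$-compact, pass to subsequences with $g_m\rightharpoonup g_\infty$ weakly and $d_m\to d_\infty$ strongly in $L^2$, noting $d_\infty\in\bigcap_{\varepsilon>0}\cl\mathcal{D}_q(\varepsilon)=\mathcal{\bar D}_q$ because $\varepsilon_m\downarrow 0$. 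Then $\langle d_m,g_m\rangle\to\langle d_\infty,g_\infty\rangle$ (strong times weak), whereas $\sup_{d\in\mathcal{\bar D}_q}(\langle d,g_m\rangle)_+\ge(\langle d_\infty,g_m\rangle)_+\to(\langle d_\infty,g_\infty\rangle)_+$, which contradicts $\langle d_m,g_m\rangle-\sup_{d\in\mathcal{\bar D}_q}(\langle d,g_m\rangle)_+\ge\delta_0$. Feeding $\rho(\varepsilon_n)\to 0$ into the bounds of the previous paragraph yields $A_n^{1/2}\le B_n^{1/2}+2\delta+\rho(\varepsilon_n)$, hence $A_n\le B_n+C\delta$ for $n$ large and an absolute constant $C$ (using $B_n^{1/2}\le 2$ eventually); letting $\delta\downarrow 0$ gives $\limsup_n(A_n-B_n)\le 0$, and combined with $L_n\le A_n+o(1)$ this is exactly $\liminf_n(B_n-L_n)\ge 0$ $\mathbf{P}^\star$-a.s.

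The main obstacle is the uniform geometric comparison $\rho(\varepsilon)\to 0$: one must pass to the limit simultaneously in the linear functional $g$ (only weak compactness is available) and in the moving weighted density $d$, and it is crucial that the limit $d_\infty$ lands in $\mathcal{\bar D}_q$ rather than merely in $\cl\mathcal{D}_q$. A secondary point to check carefully is that enlarging $\mathcal{D}_q$ to include $\mathcal{\bar D}_q$ preserves the entropy hypothesis needed to invoke Theorem \ref{thm:clil}, which is the device that lets us replace the ill-behaved empirical functional $I_n$ by the $L^2$-continuous $\langle\,\cdot\,,g_n\rangle$ when passing to closures.
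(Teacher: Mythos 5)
Your proposal is correct and follows essentially the same route as the paper: reduce via Proposition \ref{prop:asympexp} and the elementary bound $2a\lambda-\lambda^2\le (a)_+^2$ to comparing $\sup_{d\in\mathcal{D}_q(\varepsilon_n)}(I_n(d))_+^2$ with $\sup_{d\in\mathcal{\bar D}_q}(I_n(d))_+^2$, then invoke Theorem \ref{thm:clil} to replace $I_n$ by inner-product functionals and exploit $L^2$-compactness of the closed local weighted classes shrinking to $\mathcal{\bar D}_q$. The only organizational difference is that you establish the comparison uniformly in $g$ via a deterministic lemma ($\rho(\varepsilon)\to 0$, provable even more simply by Hausdorff convergence of the nested compacts $\cl\mathcal{D}_q(\varepsilon)\downarrow\mathcal{\bar D}_q$ plus Cauchy--Schwarz), whereas the paper argues by contradiction along random times and extracts a single cluster point $g$, so that no uniformity in $g$ is needed.
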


\begin{proof}
By Proposition \ref{prop:asympexp}, we have
\begin{align*}
        &
        \liminf_{n\to\infty}\Bigg\{
        \sup_{d\in\mathcal{\bar D}_q}(I_n(d))_+^2
        -\frac{1}{\log\log n}\Bigg\{
        \sup_{f\in\mathcal{M}_q}\ell_n(f)-
        \ell_n(f^\star)\Bigg\}\!\Bigg\} \\
        &\ge 
        \liminf_{n\to\infty}\Bigg\{
        \sup_{d\in\mathcal{\bar D}_q}(I_n(d))_+^2
        -
        \sup_{f\in\mathcal{M}_q(4\sqrt{\log\log n/n})}
        \sup_{p\ge 0}
        \left\{
        2\,I_n(d_f)\,p
        - p^2\right\}
        \Bigg\} \\
        &=
        \liminf_{n\to\infty}\Bigg\{
        \sup_{d\in\mathcal{\bar D}_q}(I_n(d))_+^2
        - \sup_{f\in\mathcal{M}_q(4\sqrt{\log\log n/n})}
        (I_n(d_f))_+^2\Bigg\}.
\end{align*}
Suppose that the right hand side is negative with positive probability.
Then there exists $\varepsilon>0$ and a sequence $\tau_n\uparrow\infty$ 
of random times such that 
\begin{equation}
\label{eq:coras1}
        \sup_{d\in\mathcal{\bar D}_q}(I_{\tau_n}(d))_+^2
        - \sup_{f\in\mathcal{M}_q(4\sqrt{\log\log \tau_n/\tau_n})}
        (I_{\tau_n}(d_f))_+^2 \le -\varepsilon
\end{equation}
for all $n$ with positive probability.  We will show that this entails a 
contradiction.

By Theorem \ref{thm:clil} (which can be applied here as 
$\mathcal{N}(\mathcal{D}_q,\delta)=\mathcal{N}(\cl\mathcal{D}_q,\delta)$ 
for all $\delta>0$), the process $(I_{\tau_n})_{n\ge 0}$ is 
$\mathbf{P}^\star$-a.s.\ relatively compact in 
$\ell_\infty(\cl\mathcal{D}_q)$ with
\begin{equation}
\label{eq:coras2}
        \inf_{g\in L^2_0(f^\star d\mu)}\sup_{d\in\cl\mathcal{D}_q}
        |I_{\tau_n}(d)-\langle d,g\rangle|\xrightarrow{n\to\infty}0
        \quad\mathbf{P}^\star\mbox{\rm-a.s.}
\end{equation}
Then there is a set of positive probability on which (\ref{eq:coras1}) and
(\ref{eq:coras2}) hold simultaneously.  We now concentrate our attention 
on a single sample path in this set.  For any such path, we can clearly 
find a further subsequence $\sigma_n\uparrow\infty$ such that 
$\sup_{d\in\cl\mathcal{D}_q}|I_{\sigma_n}(d)-\langle d,g\rangle|\to 0$ 
as $n\to\infty$ for some element $g\in L^2_0(f^\star d\mu)$.  Therefore,
we obtain
\begin{multline*}
        \sup_{d\in\cl\mathcal{D}_q}
        |(I_{\sigma_n}(d))_+^2-(\langle d,g\rangle)_+^2| 
        \le 
        \sup_{d\in\cl\mathcal{D}_q}|I_{\sigma_n}(d)-\langle d,g\rangle|^2
        \\ \mbox{}+
        2\sup_{d\in\cl\mathcal{D}_q}|I_{\sigma_n}(d)-\langle d,g\rangle|
        \sup_{d\in\cl\mathcal{D}_q}|\langle d,g\rangle|
        \xrightarrow{n\to\infty}0,
\end{multline*}
where we have used the elementary estimate $|a_+^2-b_+^2| = 
|a_+-b_+|(a_++b_+) \le |a_+-b_+|(|a_+-b_+|+2b_+) \le |a-b|(|a-b|+2|b|)$
for any $a,b\in\mathbb{R}$, and the fact that
$\sup_{d\in\cl\mathcal{D}_q}|\langle d,g\rangle|\le
\sup_{d\in\cl\mathcal{D}_q}\|d\|_2\|g\|_2\le 1$.  Thus (\ref{eq:coras1})
gives
\begin{align*}
        &\liminf_{n\to\infty}\Bigg\{
        \sup_{d\in\mathcal{\bar D}_q}(\langle d,g\rangle)_+^2
        - \sup_{f\in\mathcal{M}_q(4\sqrt{\log\log \sigma_n/\sigma_n})}
        (\langle d_f,g\rangle)_+^2 
        \Bigg\} = \\
        &\liminf_{n\to\infty}\Bigg\{
        \sup_{d\in\mathcal{\bar D}_q}(I_{\sigma_n}(d))_+^2
        - \sup_{f\in\mathcal{M}_q(4\sqrt{\log\log \sigma_n/\sigma_n})}
        (I_{\sigma_n}(d_f))_+^2 
        \Bigg\} \\
        &\le -\varepsilon.
\end{align*}
But as the map $d\mapsto\langle d,g\rangle$ is continuous in $L^2(f^\star d\mu)$
and $\cl\mathcal{D}_q(4\sqrt{\log\log \sigma_n/\sigma_n})$ is compact
in $L^2(f^\star d\mu)$ (this follows from 
$\mathcal{N}(\mathcal{D}_q,\delta)<\infty$ for all $\delta>0$),
we have
\begin{multline*}
        \sup_{f\in\mathcal{M}_q(4\sqrt{\log\log \sigma_n/\sigma_n})}
        (\langle d_f,g\rangle)_+^2 
        =  
        \sup_{d\in\cl\mathcal{D}_q(4\sqrt{\log\log \sigma_n/\sigma_n})}
        (\langle d,g\rangle)_+^2 
        \xrightarrow{n\to\infty} \mbox{}\\
        \sup_{d\in\bigcap_{n\ge 0}\cl\mathcal{D}_q(4\sqrt{\log\log 
        \sigma_n/\sigma_n})}(\langle d,g\rangle)_+^2 =
        \sup_{d\in\mathcal{\bar D}_q}(\langle d,g\rangle)_+^2.
\end{multline*}
Thus we have a contradiction, completing the proof.
\end{proof}

We now obtain a converse to the previous result.

\begin{prop}
\label{prop:asympexp2}
Let $q\ge q^\star$.  Assume that
$$
        \int_0^1 \sqrt{\log\mathcal{N}(\mathcal{D}_q,u)}\,du<\infty.
$$
Moreover, suppose that $|d|\le D$ for all $d\in\mathcal{D}_q$ with
$D\in L^{2+\alpha}(f^\star d\mu)$ for some $\alpha>0$.  Then
$$
        \limsup_{n\to\infty}\Bigg\{
        \sup_{d\in\mathcal{\bar D}_q^c}(I_n(d))_+^2
        -\frac{1}{\log\log n}\Bigg\{
        \sup_{f\in\mathcal{M}_q}\ell_n(f)-
        \ell_n(f^\star)\Bigg\}\Bigg\}
$$
is nonpositive $\mathbf{P}^\star$-a.s.
\end{prop}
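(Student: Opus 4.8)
The plan is to argue by contradiction, following the template of the proof of Proposition~\ref{prop:asympexp1} and using the asymptotic expansion of Proposition~\ref{prop:asympexp} as the starting point. Suppose the $\limsup$ in the statement were strictly positive on a set of positive probability. Then there exist $\varepsilon>0$ and random times $\tau_n\uparrow\infty$ such that
$$
\sup_{d\in\mathcal{\bar D}_q^c}(I_{\tau_n}(d))_+^2 -
\frac{1}{\log\log\tau_n}\left\{\sup_{f\in\mathcal{M}_q}\ell_{\tau_n}(f)-\ell_{\tau_n}(f^\star)\right\}\ge\varepsilon
$$
for all $n$, with positive probability. Fix a sample path in the intersection of this event with the full-probability events on which Theorem~\ref{thm:clil} (applicable to $\cl\mathcal{D}_q$, since $\mathcal{N}(\mathcal{D}_q,\delta)=\mathcal{N}(\cl\mathcal{D}_q,\delta)$) and Proposition~\ref{prop:asympexp} hold. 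By relative compactness of $(I_n)$ in $\ell_\infty(\cl\mathcal{D}_q)$, pass to a further subsequence $\sigma_n$ along which $I_{\sigma_n}\to\langle\,\cdot\,,g\rangle$ uniformly on $\cl\mathcal{D}_q$ for some $g\in L^2_0(f^\star d\mu)$. As in Proposition~\ref{prop:asympexp1}, the inequality $|a_+^2-b_+^2|\le|a-b|(|a-b|+2|b|)$ and the bound $\sup_{d\in\cl\mathcal{D}_q}|\langle d,g\rangle|\le1$ give $\sup_{d\in\mathcal{\bar D}_q^c}(I_{\sigma_n}(d))_+^2\to L:=\sup_{d\in\mathcal{\bar D}_q^c}(\langle d,g\rangle)_+^2$, where we use $\mathcal{\bar D}_q^c\subseteq\mathcal{\bar D}_q\subseteq\cl\mathcal{D}_q$.

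The crux is to produce a single density in $\mathcal{M}_q$ whose log-likelihood ratio asymptotically attains the value $L$. One may assume $L>\varepsilon$, since otherwise the displayed inequality fails at once (its second term is nonnegative). Choose $d^\star\in\mathcal{\bar D}_q^c$ with $\langle d^\star,g\rangle>0$ and $\langle d^\star,g\rangle^2\ge L-\varepsilon/2$, and let $(f_t)_{t\in\mbox{}]0,1]}\subset\mathcal{M}_q\backslash\{f^\star\}$ be a continuously accessible path for $d^\star$, so that $t\mapsto h(f_t,f^\star)$ is continuous, $h(f_t,f^\star)\to0$ as $t\to0$, and $d_{f_t}\to d^\star$ in $L^2(f^\star d\mu)$. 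Set $s=\langle d^\star,g\rangle\in\,]0,1]$. Since $s\sqrt{\log\log\sigma_n/(2\sigma_n)}\to0$, while $t\mapsto h(f_t,f^\star)$ is continuous, strictly positive on $\mbox{}]0,1]$, and vanishes at $0$, the intermediate value theorem yields, for all large $n$, a parameter $t_n$ with $h(f_{t_n},f^\star)\sqrt{2\sigma_n/\log\log\sigma_n}=s$; moreover $h(f_{t_n},f^\star)\to0$ forces $t_n\to0$ (any subsequential limit $t^\star\in\mbox{}]0,1]$ would give $h(f_{t^\star},f^\star)=0$), hence $d_{f_{t_n}}\to d^\star$. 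Because $s\le1<4\sqrt2$, we also have $f_{t_n}\in\mathcal{M}_q(4\sqrt{\log\log\sigma_n/\sigma_n})$ for all large $n$.

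To finish, restrict the supremum in Proposition~\ref{prop:asympexp} to the single point $f_{t_n}$, which gives
$$
\frac{1}{\log\log\sigma_n}\left\{\sup_{f\in\mathcal{M}_q}\ell_{\sigma_n}(f)-\ell_{\sigma_n}(f^\star)\right\}\ge 2\,I_{\sigma_n}(d_{f_{t_n}})\,s-s^2+o(1).
$$
Since $|I_{\sigma_n}(d_{f_{t_n}})-\langle d^\star,g\rangle|\le\sup_{d\in\cl\mathcal{D}_q}|I_{\sigma_n}(d)-\langle d,g\rangle|+\|d_{f_{t_n}}-d^\star\|_2\|g\|_2\to0$, the right-hand side converges to $2s^2-s^2=\langle d^\star,g\rangle^2\ge L-\varepsilon/2$. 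Together with $\sup_{d\in\mathcal{\bar D}_q^c}(I_{\sigma_n}(d))_+^2\to L$, this shows that along $\sigma_n$ the quantity in the displayed inequality is eventually at most $\varepsilon/2+o(1)<\varepsilon$, contradicting the choice of $\tau_n$ (a subsequence of which is $\sigma_n$).

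The routine parts are the two $L^2$-continuity estimates (already used in Proposition~\ref{prop:asympexp1}) and the handling of the $o(1)$ remainders. The genuinely delicate step is the construction of $t_n$: this is where continuous accessibility of $d^\star$ is used in an essential way, for membership in $\mathcal{\bar D}_q$ alone would only allow $d^\star$ to be approached along densities with uncontrolled Hellinger radii, whereas we need to hit the prescribed radius $s\sqrt{\log\log\sigma_n/(2\sigma_n)}$ exactly so that the bracket $2\,I_n(d_f)\,h(f,f^\star)\sqrt{2n/\log\log n}-h(f,f^\star)^2\,2n/\log\log n$ reproduces the full variational value $\sup_{p\ge0}\{2\,I_n(d^\star)\,p-p^2\}=(I_n(d^\star))_+^2$. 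This is exactly why the lower bound must be phrased in terms of $\mathcal{\bar D}_q^c$ rather than $\mathcal{\bar D}_q$.
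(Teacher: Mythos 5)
Your proposal is correct and follows essentially the same route as the paper's proof: argue by contradiction from the expansion of Proposition~\ref{prop:asympexp}, extract via the compact LIL (Theorem~\ref{thm:clil}) a subsequence along which $I_{\sigma_n}\to\langle\,\cdot\,,g\rangle$ uniformly on $\cl\mathcal{D}_q$, and then evaluate the quadratic at a point of a continuously accessible path whose Hellinger radius is tuned to $(\langle d^\star,g\rangle)_+\sqrt{\log\log\sigma_n/(2\sigma_n)}$ so as to recover $(\langle d^\star,g\rangle)_+^2$. The only cosmetic differences are that the paper perturbs the radius by $\sigma_n^{-1}$ to avoid assuming $\langle d_0,g\rangle>0$, whereas you secure positivity from $L\ge\varepsilon$, and that you make the intermediate-value reparametrization of the path explicit.
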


\begin{proof}
Suppose the result is false.  By Proposition 
\ref{prop:asympexp}, there is $\varepsilon>0$ and a sequence 
$\tau_n\uparrow\infty$ of random times so that
\begin{multline*}
        \sup_{d\in\mathcal{\bar D}_q^c}(I_{\tau_n}(d))_+^2 
        -
        \sup_{f\in\mathcal{M}_q(4\sqrt{\log\log\tau_n/\tau_n})}
        \Bigg\{
        - h(f,f^\star)^2\,\frac{2\tau_n}{\log\log\tau_n}
        \\ \mbox{}
        +2\,I_{\tau_n}(d_f)\,
        h(f,f^\star)\,\sqrt{\frac{2\tau_n}{\log\log\tau_n}}\Bigg\}
        \ge\varepsilon
\end{multline*}
for all $n$
with positive probability.  Proceeding as in the proof of Proposition
\ref{prop:asympexp1}, we can then show that there is a sequence of
times $\sigma_n\uparrow\infty$ and some $g\in L^2_0(f^\star d\mu)$
such that
\begin{multline*}
        \limsup_{n\to\infty}\Bigg\{
        \sup_{d\in\mathcal{\bar D}_q^c}(\langle d,g\rangle)_+^2
        -
        \sup_{f\in\mathcal{M}_q(4\sqrt{\log\log\sigma_n/\sigma_n})}
        \Bigg\{
        - h(f,f^\star)^2\,\frac{2\sigma_n}{\log\log\sigma_n}
        \\ \mbox{}
        +2\,\langle d_f,g\rangle\,
        h(f,f^\star)\,\sqrt{\frac{2\sigma_n}{\log\log\sigma_n}}\Bigg\}
        \Bigg\}\ge\varepsilon.
\end{multline*}
We will show that this entails a contradiction.

Let $d_0\in\mathcal{\bar D}_q$ be a continuously accessible point.
Then there exists an $\alpha_0>0$ (depending on $d_0$) and a path 
$(f_\alpha)_{\alpha\in\mbox{}]0,\alpha_0]}$ such that 
$h(f_\alpha,f^\star)=\alpha$ for all $\alpha\in\mbox{}]0,\alpha_0]$
and $d_{f_\alpha}\to d_0$ in $L^2(f^\star d\mu)$ as $\alpha\to 0$.
Now choose the sequence
$$
        \alpha_n = \{(\langle d_0,g\rangle)_+ + \sigma_n^{-1}\}
        \sqrt{\frac{\log\log\sigma_n}{2\sigma_n}}.
$$
As $(\langle d_0,g\rangle)_+ \le \|d_0\|_2\|g\|_2\le 1$,
we clearly have 
$$
        0<\alpha_n<\alpha_0 \wedge 4\sqrt{\log\log\sigma_n/\sigma_n}
$$
for all $n$ sufficiently large.  In particular, it follows that
$f_{\alpha_n}\in\mathcal{M}_q(4\sqrt{\log\log\sigma_n/\sigma_n})$,
so that
\begin{multline*}
        \sup_{f\in\mathcal{M}_q(4\sqrt{\log\log\sigma_n/\sigma_n})}
        \Bigg\{
        2\,\langle d_f,g\rangle\,
        h(f,f^\star)\,\sqrt{\frac{2\sigma_n}{\log\log\sigma_n}}
        - h(f,f^\star)^2\,\frac{2\sigma_n}{\log\log\sigma_n}
        \Bigg\}
        \\ \mbox{}\ge
        2\,\langle d_{f_{\alpha_n}},g\rangle\,\{(\langle 
        d_0,g\rangle)_+ + \sigma_{n}^{-1}\}
        - \{(\langle d_0,g\rangle)_+ + \sigma_n^{-1}\}^2.
\end{multline*}
Therefore, we have
\begin{align*}
        &\limsup_{n\to\infty}\Bigg\{
        \sup_{d\in\mathcal{\bar D}_q^c}(\langle d,g\rangle)_+^2
        -
        \sup_{f\in\mathcal{M}_q(4\sqrt{\log\log\sigma_n/\sigma_n})}
        \Bigg\{
        - h(f,f^\star)^2\,\frac{2\sigma_n}{\log\log\sigma_n}
        \\ &\qquad\qquad\qquad\mbox{}
        +2\,\langle d_f,g\rangle\,
        h(f,f^\star)\,\sqrt{\frac{2\sigma_n}{\log\log\sigma_n}}\Bigg\}
        \Bigg\} 
	\\ &\quad\mbox{} \le
        \sup_{d\in\mathcal{\bar D}_q^c}(\langle d,g\rangle)_+^2
        - (\langle d_0,g\rangle)_+^2
\end{align*}
for any continuously accessible element $d_0\in\mathcal{\bar D}_q$.
But clearly we can choose $d_0$ to make the right hand side of this 
expression arbitrarily small.  Thus we have the desired contradiction.
\end{proof}

We can now complete the proof of Theorem \ref{thm:lowerlil}.

\begin{proof}[Proof of Theorem \ref{thm:lowerlil}]
We obtain separately the lower and upper bounds.

\textbf{Lower bound.} By Propositions \ref{prop:asympexp1} and
\ref{prop:asympexp2}, we have
\begin{multline*}
        \limsup_{n\to\infty}\frac{1}{\log\log n}\left\{
        \sup_{f\in\mathcal{M}_q}\ell_n(f)-
        \sup_{f\in\mathcal{M}_p}\ell_n(f)\right\} \ge \mbox{}\\
        \limsup_{n\to\infty}\left\{
        \sup_{d\in\mathcal{\bar D}_q^c}(I_n(d))_+^2-
        \sup_{d\in\mathcal{\bar D}_p}(I_n(d))_+^2
        \right\}\quad\mathbf{P}^\star\mbox{\rm-a.s.}
\end{multline*}
Now fix any $g\in L^2_0(f^\star d\mu)$.  By Theorem \ref{thm:clil} (which 
applies here as $\mathcal{N}(\mathcal{D}_q,\delta)= 
\mathcal{N}(\cl\mathcal{D}_q,\delta)\ge \mathcal{N}(\mathcal{\bar 
D}_q,\delta)$ for all $\delta>0$), there is a sequence 
$\tau_n\uparrow\infty$ of random times such that 
$I_{\tau_n}\to\langle\,\cdot\,,g\rangle$ in $\ell_\infty(\mathcal{\bar 
D}_q)$ $\mathbf{P}^\star$-a.s.  Therefore
$$
        \sup_{d\in\mathcal{\bar D}_q^c}(I_{\tau_n}(d))_+^2-
        \sup_{d\in\mathcal{\bar D}_p}(I_{\tau_n}(d))_+^2
        \xrightarrow{n\to\infty} 
        \sup_{d\in\mathcal{\bar D}_q^c}(\langle d,g\rangle)_+^2-
        \sup_{d\in\mathcal{\bar D}_p}(\langle d,g\rangle)_+^2
        \quad\mathbf{P}^\star\mbox{\rm-a.s.},
$$
so that certainly
$$
        \limsup_{n\to\infty}\frac{1}{\log\log n}\left\{
        \sup_{f\in\mathcal{M}_q}\ell_n(f)-
        \sup_{f\in\mathcal{M}_p}\ell_n(f)\right\} \ge 
        \sup_{d\in\mathcal{\bar D}_q^c}(\langle d,g\rangle)_+^2-
        \sup_{d\in\mathcal{\bar D}_p}(\langle d,g\rangle)_+^2
$$
$\mathbf{P}^\star$-a.s.  But as this inequality holds for every $g\in 
L^2_0(f^\star d\mu)$, taking the supremum over $g$ gives the requisite 
lower bound.

\textbf{Upper bound.} By Propositions \ref{prop:asympexp1} and 
\ref{prop:asympexp2}, we have
\begin{multline*}
        \limsup_{n\to\infty}\frac{1}{\log\log n}\left\{
        \sup_{f\in\mathcal{M}_q}\ell_n(f)-
        \sup_{f\in\mathcal{M}_p}\ell_n(f)\right\} \le \mbox{}\\
        \limsup_{n\to\infty}\left\{
        \sup_{d\in\mathcal{\bar D}_q}(I_n(d))_+^2-
        \sup_{d\in\mathcal{\bar D}_p^c}(I_n(d))_+^2
        \right\}\quad\mathbf{P}^\star\mbox{\rm-a.s.}
\end{multline*}
It is elementary that for any $d,d'\in\mathcal{\bar D}_q$ and
$g\in L^2_0(f^\star d\mu)$
\begin{align*}
        &(I_n(d))_+^2-(I_n(d'))_+^2 \\
        &\quad\le
        |(I_n(d))_+^2-(\langle d,g\rangle)_+^2|
        +|(I_n(d'))_+^2-(\langle d',g\rangle)_+^2| 
        +(\langle d,g\rangle)_+^2-(\langle d',g\rangle)_+^2 \\
        &\quad\le
        2\sup_{d\in\mathcal{\bar D}_q}|(I_n(d))_+^2-(\langle d,g\rangle)_+^2|
        +(\langle d,g\rangle)_+^2-(\langle d',g\rangle)_+^2.
\end{align*}
Taking the supremum over $d\in\mathcal{\bar D}_q$ and the infimum over
$d'\in\mathcal{\bar D}_p^c$, we find that
\begin{align*}
        &\sup_{d\in\mathcal{\bar D}_q}(I_n(d))_+^2
        -\sup_{d\in\mathcal{\bar D}_p^c}(I_n(d))_+^2 
        \\
        &\quad\le
        2\sup_{d\in\mathcal{\bar D}_q}|(I_n(d))_+^2-(\langle d,g\rangle)_+^2|
        +\sup_{d\in\mathcal{\bar D}_q}(\langle d,g\rangle)_+^2
        -\sup_{d\in\mathcal{\bar D}_p^c}(\langle d,g\rangle)_+^2 \\
        &\quad\le
        2\sup_{d\in\mathcal{\bar D}_q}|(I_n(d))_+^2-(\langle d,g\rangle)_+^2|
        +\sup_{g\in L^2_0(f^\star d\mu)}\left\{
        \sup_{d\in\mathcal{\bar D}_q}(\langle d,g\rangle)_+^2
        -\sup_{d\in\mathcal{\bar D}_p^c}(\langle d,g\rangle)_+^2\right\}.
\end{align*}
But as this holds for any $g\in L^2_0(f^\star d\mu)$, we finally obtain
\begin{multline*}
        \sup_{d\in\mathcal{\bar D}_q}(I_n(d))_+^2
        -\sup_{d\in\mathcal{\bar D}_p^c}(I_n(d))_+^2 
	\le
        2\inf_{g\in L^2_0(f^\star d\mu)}
        \sup_{d\in\mathcal{\bar D}_q}|(I_n(d))_+^2-(\langle d,g\rangle)_+^2|
        \\ \mbox{}
        +\sup_{g\in L^2_0(f^\star d\mu)}\left\{
        \sup_{d\in\mathcal{\bar D}_q}(\langle d,g\rangle)_+^2
        -\sup_{d\in\mathcal{\bar D}_p^c}(\langle d,g\rangle)_+^2\right\}.
\end{multline*}
It follows as in the proof of Proposition \ref{prop:asympexp1} that the 
first term in this expression converges to zero $\mathbf{P}^\star$-a.s.
The requisite upper bound follows immediately.
\end{proof}

Finally, we now complete the proof of Corollary \ref{cor:lowerlil}

\begin{proof}[Proof of Corollary \ref{cor:lowerlil}]
It evidently suffices to prove that
\begin{equation}
\label{eq:inconpf}
        \Gamma:=
        \sup_{g\in L^2_0(f^\star d\mu)}\Bigg\{
        \sup_{d\in\mathcal{\bar D}_q^c}(\langle d,g\rangle)_+^2
        -\sup_{d\in\mathcal{\bar D}_{q^\star}}(\langle d,g\rangle)_+^2
        \Bigg\}>0.
\end{equation}
To this end, note that by direct computation
$$
        \langle 1,d_f\rangle = \frac{\int \sqrt{ff^\star}\,d\mu-1}{h(f,f^\star)}
	= -\frac{h(f,f^\star)}{2}.
$$
Choose $(f_n)_{n\ge 0}\subset\mathcal{M}_q\backslash\{f^\star\}$
such that $h(f_n,f^\star)\to 0$ and $d_{f_n}\to d_0\in\mathcal{\bar D}_q$, 
then
$$
        \langle 1,d_0\rangle =
	\lim_{n\to\infty}\langle 1,d_{f_n}\rangle =
        -\lim_{n\to\infty}\frac{h(f_n,f^\star)}{2}=0.
$$
Moreover, it is immediate that $\|d_0\|_2\le 1$.  We have therefore
shown that $\mathcal{\bar D}_q\subset L^2_0(f^\star d\mu)$.  Now choose
$g\in\mathcal{\bar D}_q^c\backslash\mathcal{\bar D}_{q^\star}$.  
As $\mathcal{\bar D}_{q^\star}$ is closed, it follows directly that
$$
        \sup_{d\in\mathcal{\bar D}_q^c}(\langle d,g\rangle)_+^2=1,\qquad
        \qquad
        \sup_{d\in\mathcal{\bar D}_{q^\star}}(\langle d,g\rangle)_+^2<1.
$$
Therefore (\ref{eq:inconpf}) holds, and the proof is complete.
\end{proof}

\section{Proof of Theorem \ref{thm:mainconsist}}
\label{sec:proofmainconsist}

The proof of Theorem \ref{thm:mainconsist} is based on Theorem 
\ref{thm:upperlil} and the following result.

\begin{prop}
\label{prop:liklln}
Let $\mathcal{M}^n$ for $n\ge 1$ be a family of strictly positive 
probability densities with respect to a reference measure $\mu$
such that $\mathcal{M}^n\subseteq\mathcal{M}^{n+1}$ for all $n$.
Define $\mathcal{M}=\bigcup_n\mathcal{M}^n$, and let $f^\star$ be another
probability density with respect to $\mu$ such that 
$f^\star\not\in\cl\mathcal{M}$, where $\cl\mathcal{M}$ denotes the
$L^1(d\mu)$-closure of $\mathcal{M}$.  Let $\mathcal{H}^n = 
\{\sqrt{f/f^\star}:f\in\mathcal{M}^n\}$, and
suppose there exist $K(n)\ge 1$ and $p\ge 1$ so that
$$
        \mathcal{N}(\mathcal{H}^n,\delta)
        \le\left(
        \frac{K(n)}{\delta}
        \right)^p
$$
for all $\delta\le 1$ and $n\ge 1$, where $\mathcal{N}(\mathcal{H}^n,\delta)$
is the minimal number of brackets of $L^2(f^\star d\mu)$-width $\delta$
needed to cover $\mathcal{H}^n$.
Let $(X_i)_{i\in\mathbb{N}}$ be i.i.d.\ with distribution $f^\star d\mu$.
If in addition $\log K(n)=o(n)$, then we have
$$
        \limsup_{n\to\infty}\sup_{f\in\mathcal{M}^n}
        \frac{1}{n}\sum_{j=1}^n\log\left(
        \frac{f(X_j)}{f^\star(X_j)}
        \right)<0\quad\mbox{\rm a.s.}
$$
\end{prop}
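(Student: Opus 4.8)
The plan is to avoid any empirical-process estimate for $\nu_n$ and instead bound the exponential moment $\mathbf{E}^\star\big[\sup_{f\in\mathcal{M}^n}e^{(\ell_n(f)-\ell_n(f^\star))/2}\big]$ directly, concluding by Markov's inequality and Borel--Cantelli. This is a quantitative form of Wald's classical consistency argument, adapted so as to tolerate a model class $\mathcal{M}^n$ that grows with $n$.

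I would begin with two preliminaries. Since $\|f-g\|_1\le\big(\int(\sqrt f-\sqrt g)^2\,d\mu\big)^{1/2}\big(\int(\sqrt f+\sqrt g)^2\,d\mu\big)^{1/2}\le 2\,h(f,g)$ for probability densities, the hypothesis $f^\star\notin\cl\mathcal{M}$ yields $\varepsilon_0:=\inf_{f\in\mathcal{M}}h(f,f^\star)>0$, hence $h(f,f^\star)\ge\varepsilon_0$ for all $f\in\mathcal{M}^n$; and $\mathbf{E}^\star\big[\sqrt{f(X_1)/f^\star(X_1)}\big]=\int\sqrt{ff^\star}\,d\mu=1-\tfrac12 h(f,f^\star)^2\le 1-\tfrac12\varepsilon_0^2$. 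Fix a bracket width $\delta\in(0,\tfrac12\varepsilon_0^2)$ and set $\rho:=1-\tfrac12\varepsilon_0^2+\delta\in(0,1)$. For each $n$, cover $\mathcal{H}^n$ by $N_n\le(K(n)/\delta)^p$ brackets $[g_i^L,g_i^U]$ of $L^2(f^\star d\mu)$-width $\le\delta$; discarding empty brackets and replacing $g_i^L$ by $g_i^L\vee 0$, one may assume $0\le g_i^L\le g_i^U$ and that each bracket contains some $f_i\in\mathcal{M}^n$. Then, using $\|\cdot\|_1\le\|\cdot\|_2$ under $f^\star d\mu$,
$$\mathbf{E}^\star g_i^U\le\mathbf{E}^\star g_i^L+\|g_i^U-g_i^L\|_1\le\mathbf{E}^\star\big[\sqrt{f_i/f^\star}\big]+\|g_i^U-g_i^L\|_2\le\big(1-\tfrac12 h(f_i,f^\star)^2\big)+\delta\le\rho .$$

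The crux is the observation that $e^{(\ell_n(f)-\ell_n(f^\star))/2}=\prod_{j=1}^n\sqrt{f(X_j)/f^\star(X_j)}\le\prod_{j=1}^n g_i^U(X_j)$ whenever $\sqrt{f/f^\star}$ lies in bracket $i$, so that the supremum over $\mathcal{M}^n$ is dominated by a finite maximum over brackets. Taking expectations and using that the $X_j$ are i.i.d.\ and the $g_i^U$ nonnegative,
$$\mathbf{E}^\star\Big[\sup_{f\in\mathcal{M}^n}e^{(\ell_n(f)-\ell_n(f^\star))/2}\Big]\le\sum_{i=1}^{N_n}\mathbf{E}^\star\Big[\prod_{j=1}^n g_i^U(X_j)\Big]=\sum_{i=1}^{N_n}\big(\mathbf{E}^\star g_i^U\big)^n\le N_n\,\rho^n\le\Big(\frac{K(n)}{\delta}\Big)^p\rho^n .$$
With $a_0:=-\tfrac12\log\rho>0$, Markov's inequality gives $\mathbf{P}^\star\big[\sup_{f\in\mathcal{M}^n}\{\ell_n(f)-\ell_n(f^\star)\}\ge -2na_0\big]\le(K(n)/\delta)^p\rho^n e^{na_0}=(K(n)/\delta)^p e^{-na_0}$, which is $e^{-na_0+o(n)}$ since $\log K(n)=o(n)$, hence summable; Borel--Cantelli then yields $\sup_{f\in\mathcal{M}^n}\{\ell_n(f)-\ell_n(f^\star)\}<-2na_0$ for all large $n$, $\mathbf{P}^\star$-a.s., which is the claim.

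I expect the only genuinely delicate point to be the choice of $\delta$: it must be strictly below $\tfrac12\varepsilon_0^2$ so that the upper-bracket means satisfy $\mathbf{E}^\star g_i^U\le\rho<1$, and this is exactly where the separation $f^\star\notin\cl\mathcal{M}$ is used. A naive attempt to control $\sup_f\nu_n(\cdot)$ directly would run into the absence of a uniform integrable envelope for $\mathcal{M}^n$ (a supremum of densities need not be integrable); exponentiating sidesteps this entirely, since then the brackets are needed only in $L^2$ and independence linearizes the product, while $\log K(n)=o(n)$ is precisely the budget that absorbs the number $N_n$ of brackets into the geometric factor $\rho^n$. One should also note, following the essential-supremum convention of Remark~\ref{rem:esssup}, that the pointwise bound $e^{(\ell_n(f)-\ell_n(f^\star))/2}\le\prod_j g_i^U(X_j)$ passes to the essential supremum because its right-hand side does not depend on $f$.
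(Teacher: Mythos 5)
Your argument is correct, and it takes a genuinely different route from the paper's. The paper proves the proposition by first passing to $\bar f=(f+f^\star)/2$ via concavity of the logarithm (as in Theorem \ref{thm:deviationbound}), so that $\tfrac1n\sum_j\log(f(X_j)/f^\star(X_j))\le 4n^{-1/2}\nu_n(\log\sqrt{\bar f/f^\star})-2D(f^\star\|\bar f)$; it then invokes the empirical-process concentration inequality \cite[Theorem 5.11]{vdG00} together with the entropy bound and $\log K(n)=o(n)$ to show the uniform LLN $\sup_{f\in\mathcal{M}^n}n^{-1/2}\nu_n(\cdot)\to 0$ a.s., and finishes with Pinsker's inequality to convert $f^\star\notin\cl\mathcal{M}$ into $\inf_f D(f^\star\|\bar f)>0$. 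You instead run a Wald/Chernoff-type argument on the half-power likelihood ratio: the pointwise domination $\prod_j\sqrt{f(X_j)/f^\star(X_j)}\le\prod_j g_i^U(X_j)$ reduces the (essential) supremum to a finite union bound over brackets, independence factorizes the expectation, and the Hellinger separation $\varepsilon_0>0$ forces each upper-bracket mean below $\rho<1$ once $\delta<\tfrac12\varepsilon_0^2$. Your computation checks out: the adjustment $g_i^L\mapsto g_i^L\vee 0$ preserves the brackets and their width, $\mathbf{E}^\star g_i^U\le 1-\tfrac12 h(f_i,f^\star)^2+\delta\le\rho$, and the Markov/Borel--Cantelli step with $a_0=-\tfrac12\log\rho$ absorbs $N_n\le(K(n)/\delta)^p$ exactly because $\log K(n)=o(n)$. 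What each approach buys: yours is more elementary and self-contained (no appeal to van de Geer's machinery, and it uses the entropy hypothesis only at a single scale $\delta$ rather than at all scales), and it yields an explicit exponential deviation bound $e^{-na_0+o(n)}$ and an explicit negative limsup $-2a_0$ tied to the Hellinger separation; the paper's route reuses the same empirical-process toolbox already needed for Theorem \ref{thm:upperlil}, which keeps the appendix uniform, and expresses the limit in terms of $\inf_f D(f^\star\|\bar f)$. Your closing remarks on the essential-supremum convention of Remark \ref{rem:esssup} and on why exponentiating sidesteps the lack of an integrable envelope are both apt.
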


\begin{proof}
As in the proof of Theorem \ref{thm:deviationbound}, we have
$$
        \frac{1}{n}\sum_{j=1}^n\log\left(\frac{f(X_j)}{f^\star(X_j)}\right)
        \le
        4n^{-1/2} \nu_n(\log(\{\bar f/f^\star\}^{1/2})) - 2 D(f^\star||\bar f).
$$
The following claim will be proved below:
$$
        \lim_{n\to\infty}
        \sup_{f\in\mathcal{M}^n}n^{-1/2} \nu_n(\log(\{\bar f/f^\star\}^{1/2}))
        =0\quad\mbox{\rm a.s.}
$$
Using the claim, the proof is easily completed: indeed, if the claim holds,
then we have a.s.
$$
        \limsup_{n\to\infty}\sup_{f\in\mathcal{M}^n}
        \frac{1}{n}\sum_{j=1}^n\log\left(
        \frac{f(X_j)}{f^\star(X_j)}
        \right)
        \le
        -2\inf_{f\in\mathcal{M}} D(f^\star||\bar f)<0
$$
where the last inequality follows from Pinsker's inequality and
$f^\star\not\in\cl\mathcal{M}$.

It therefore remains to prove the claim.  To this end we apply
\cite[Theorem 5.11]{vdG00} as in the proof of \cite[Theorem 7.4]{vdG00}
(cf.\ Theorem \ref{thm:deviationbound} above),
which yields
$$
        \mathbf{P}\left[
        \sup_{f\in\mathcal{M}^n}|n^{-1/2} \nu_n(\log(\{\bar f/f^\star\}^{1/2}))|
        \ge\alpha
        \right]\le
        C\,e^{-n\alpha^2/C}
$$
for every $\alpha>0$ such that
$C\sqrt{p}\,(1+\sqrt{\log K(n)})\le\alpha\sqrt{n}\le 32\sqrt{n}$
and $n\ge 1$, where $C$ is a universal constant.  As $\log K(n)=o(n)$,
we have
$$
        \sum_{n\ge 1}
        \mathbf{P}\left[
        \sup_{f\in\mathcal{M}^n}|n^{-1/2} \nu_n(\log(\{\bar f/f^\star\}^{1/2}))|
        \ge\alpha
        \right]<\infty
$$
for $0<\alpha\le 32$, so the claim follows from Borel-Cantelli.
\end{proof}

We can now complete the proof of Theorem \ref{thm:mainconsist}.

\begin{proof}[Proof of Theorem \ref{thm:mainconsist}]
Define
$$
	\Delta_n(q,q^\star) = 
        \sup_{f\in\mathcal{M}_q^n}
        \ell_n(f)-\sup_{f\in \mathcal{M}_{q^\star}^n}\ell_n(f).
$$
By Theorem \ref{thm:upperlil} and easy manipulations, 
$\mathbf{P}^\star$-a.s.
\begin{align*}
        &\limsup_{n\to\infty}
        \sup_{q>q^\star}
        \frac{1}{\pen(n,q)-\pen(n,q^\star)}\Delta_n(q,q^\star) \\
        &\mbox{}\le
        \lim_{n\to\infty}
        \sup_{q>q^\star}
        \frac{\eta(q)\{\log K(2n)\vee \log\log n\}}{\pen(n,q)-
        \pen(n,q^\star)} \times\mbox{} \\ &
        \quad\limsup_{n\to\infty}
        \frac{1}{\log K(2n)\vee \log\log n}
        \sup_{q>q^\star}
        \frac{1}{\eta(q)}\Delta_n(q,q^\star)=0.
\end{align*}
Therefore, $\mathbf{P}^\star$-a.s.\ eventually as $n\to\infty$
$$
        \sup_{f\in\mathcal{M}_q^n}\ell_n(f) - \pen(n,q)
        < \sup_{f\in\mathcal{M}_{q^\star}^n}\ell_n(f)
        - \pen(n,q^\star)
$$
for all $q>q^\star$.  It follows that $\limsup_{n\to\infty}\hat q_n\le 
q^\star$ $\mathbf{P}^\star$-a.s., that is, the penalized likelihood order 
estimator does not asymptotically overestimate the order.

On the other hand, we note that for every $q<q^\star$
$$
        \limsup_{n\to\infty}\frac{1}{n}\left\{
        \sup_{f\in\mathcal{M}_q^n}\ell_n(f)-
        \sup_{f\in\mathcal{M}_{q^\star}^n}\ell_n(f)
        \right\} 
        \le
        \limsup_{n\to\infty}
        \sup_{f\in\mathcal{M}_q^n}
        \frac{1}{n}\sum_{j=1}^n \log\left(
        \frac{f(X_j)}{f^\star(X_j)}\right)
$$
which is strictly negative $\mathbf{P}^\star$-a.s.\ by Proposition 
\ref{prop:liklln}, where we have used that $\log K(n)=o(n)$ and that 
$\mathcal{N}(\mathcal{H}_q^n(2),\delta)\le 
\mathcal{N}(\mathcal{H}_{q^\star}^n(2),\delta) \le 
(2K(n)/\delta)^{\eta(q^\star)}$ for all $\delta\le 2$ and $n$ 
sufficiently large. As $\pen(n,q)/n\to 0$ as $n\to\infty$ for
$q<q^\star$
$$
        \limsup_{n\to\infty}\max_{q<q^\star}\frac{1}{n}\left\{
	\Delta_n(q,q^\star)
	-\pen(n,q)+\pen(n,q^\star)
        \right\} < 0
$$
$\mathbf{P}^\star$-a.s.  In particular, we find that
$\mathbf{P}^\star$-a.s.\ eventually as $n\to\infty$
$$
        \sup_{f\in\mathcal{M}_q^n}\ell_n(f) - \pen(n,q)
        < \sup_{f\in\mathcal{M}_{q^\star}^n}\ell_n(f)
        - \pen(n,q^\star)
$$
for all $q<q^\star$.  It follows that $\liminf_{n\to\infty}\hat q_n\ge
q^\star$ $\mathbf{P}^\star$-a.s., that is, the penalized likelihood order 
estimator does not asymptotically underestimate the order.
\end{proof}

Finally, let us prove Corollary \ref{cor:inconsist}.

\begin{proof}[Proof of Corollary \ref{cor:inconsist}]
It is shown in the proof of Corollary \ref{cor:lowerlil} that
$$
        \Gamma:=
        \sup_{g\in L^2_0(f^\star d\mu)}\Bigg\{
        \sup_{d\in\mathcal{\bar D}_q^c}(\langle d,g\rangle)_+^2
        -\sup_{d\in\mathcal{\bar D}_{q^\star}}(\langle d,g\rangle)_+^2
        \Bigg\}>0.
$$
By Theorem \ref{thm:lowerlil}, we have $\mathbf{P}^\star$-a.s.
\begin{align*}
        &\limsup_{n\to\infty}
        \frac{1}{\pen(n,q)-\pen(n,q^\star)}
        \Bigg\{\sup_{f\in\mathcal{M}_q}\ell_n(f)-
        \sup_{f\in\mathcal{M}_{q^\star}}\ell_n(f)\Bigg\} \\
        &\mbox{}\ge
        \frac{1}{C\{\eta(q)-\eta(q^\star)\}}
        \sup_{g\in L^2_0(f^\star d\mu)}\Bigg\{
        \sup_{d\in\mathcal{\bar D}_q^c}(\langle d,g\rangle)_+^2
        -\sup_{d\in\mathcal{\bar D}_{q^\star}}(\langle d,g\rangle)_+^2
        \Bigg\}.
\end{align*}
Therefore, choosing 
$C<\Gamma/\{\eta(q)-\eta(q^\star)\}$, we find that
$$
        \sup_{f\in\mathcal{M}_q}\ell_n(f)-\pen(n,q) >
        \sup_{f\in\mathcal{M}_{q^\star}}\ell_n(f) -\pen(n,q^\star)
$$
infinitely often $\mathbf{P}^\star$-a.s., so 
$\hat q_n\ne q^\star$ infinitely often $\mathbf{P}^\star$-a.s.
\end{proof}

\section{Proof of Proposition \ref{prop:mixconsist}}
\label{sec:proofmixconsist}

The proofs of consistency in Propositions \ref{prop:mixconsist} and 
\ref{prop:mixconsist2} follow almost immediately from Theorem 
\ref{thm:mainconsist}, Theorem \ref{cor:localmixtures}, and Example 
\ref{ex:gaussian}.  Let us begin with Proposition 
\ref{prop:mixconsist2}.

\begin{proof}[Proof of Proposition \ref{prop:mixconsist2}]
By Example \ref{ex:gaussian}, the assumption
of Theorem \ref{thm:mainconsist} holds with $\eta(q)=
18(d+1)q+1$ and $\log K(n) = \log C_1^\star+ C_2^\star T(n)^2$.
The desired consistency results now follow immediately from
Theorem \ref{thm:mainconsist}.
\end{proof}

The consistency part of Proposition \ref{prop:mixconsist} follows 
similarly.  The main difficulty here is to establish the condition 
$\mathcal{\bar D}_q^c\backslash\mathcal{\bar D}_{q^\star}\ne 
\varnothing$ of Corollary \ref{cor:inconsist}, which is needed to prove 
the inconsistency part of Proposition \ref{prop:mixconsist}. In the 
proof of the latter condition, we rely on the geometric results 
on mixtures established in \cite{GasHanGeo}.  In the remainder of this 
section, we always assume that we are in the setting of Proposition 
\ref{prop:mixconsist}.

\begin{lem}
\label{lem:dqstar}
Suppose that Assumption A holds.  Then we have
$$
        \mathcal{\bar D}_{q^\star} =
        \Bigg\{\frac{L}{\|L\|_2}:
        L=\sum_{i=1}^{q^\star}
        \Bigg\{\eta_i\,\frac{f_{\theta_i^\star}}{f^\star}
        +\beta_i^*\,\frac{D_1f_{\theta_i^\star}}{f^\star}
        \Bigg\},~
        \eta_i\in\mathbb{R}, ~
	\beta_i\in\mathbb{R}^{d}, ~
        \sum_{i=1}^{q^\star}\eta_i=0
        \Bigg\}.
$$
\end{lem}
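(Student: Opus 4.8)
The plan is to establish the two inclusions separately. Recall that $\mathcal{\bar D}_{q^\star}$ is exactly the set of $L^2(f^\star d\mu)$-limit points of $d_f$ as $h(f,f^\star)\to 0$ with $f\in\mathcal{M}_{q^\star}$, so it suffices to argue along sequences. Write $f^\star=\sum_{i=1}^{q^\star}\pi_i^\star f_{\theta_i^\star}$; since $f^\star\notin\cl\mathcal{M}_{q^\star-1}$ the centers $\theta_i^\star$ are distinct and all $\pi_i^\star>0$. The first, structural, step is to show that any sequence $f_k\in\mathcal{M}_{q^\star}$ with $h(f_k,f^\star)\to 0$ admits, after passing to a subsequence and relabelling components, a representation $f_k=\sum_{i=1}^{q^\star}\pi_i^{(k)}f_{\vartheta_i^{(k)}}$ with $\pi_i^{(k)}\to\pi_i^\star$ and $\vartheta_i^{(k)}\to\theta_i^\star$. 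Indeed, the mixing measures $\sum_i\pi_i^{(k)}\delta_{\vartheta_i^{(k)}}$ lie in the weakly compact set of probability measures on $\cl\Theta$ supported on at most $q^\star$ points; since finite location mixtures of $f_0$ are identifiable (a standard Fourier fact, valid for any probability density $f_0\in L^1$), the map from this set to $L^1(d\mu)$ is a continuous injection, hence a homeomorphism onto its image, so $h(f_k,f^\star)\to 0$ forces weak convergence of the mixing measures to $\sum_i\pi_i^\star\delta_{\theta_i^\star}$; as this limit has exactly $q^\star$ distinct atoms the claim follows. In particular no weight degenerates to $0$, since otherwise the limit would be a mixture of fewer than $q^\star$ translates, contradicting $f^\star\notin\cl\mathcal{M}_{q^\star-1}$.

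Next comes the local expansion. Put $\eta_i^{(k)}=\pi_i^{(k)}-\pi_i^\star$ (so $\sum_i\eta_i^{(k)}=0$) and $\beta_i^{(k)}=\pi_i^\star(\vartheta_i^{(k)}-\theta_i^\star)$, and let $r_k$ be the Euclidean norm of the vector of all these quantities; then $r_k\to 0$, and $r_k>0$ when $f_k\neq f^\star$ by identifiability. A second-order Taylor expansion of $\theta\mapsto f_\theta$, using $f_0\in C^3$ and the envelope bounds $f_\theta\le H_0 f^\star$, $|D_1 f_\theta|\le H_1 f^\star$, $|D_2 f_\theta|\le H_2 f^\star$ of Assumption A, yields
\[
   \frac{f_k}{f^\star}-1 \;=\; L_k+\mathrm{Rem}_k,\qquad
   L_k:=\sum_{i=1}^{q^\star}\Big\{\eta_i^{(k)}\,\frac{f_{\theta_i^\star}}{f^\star}
   +(\beta_i^{(k)})^*\,\frac{D_1 f_{\theta_i^\star}}{f^\star}\Big\},
\]
with $\|L_k\|_4=O(r_k)$ and $\|\mathrm{Rem}_k\|_4=O(r_k^2)$; here the $L^4(f^\star d\mu)$-integrability of $H_0,H_1,H_2$ in Assumption A is used. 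Since $\sqrt{1+u}-1=\tfrac12 u+O(u^2)$, this gives $\sqrt{f_k/f^\star}-1=\tfrac12 L_k+o(r_k)$ in $L^2(f^\star d\mu)$ (in other words, $\mathcal{M}_{q^\star}$ is differentiable in quadratic mean at $f^\star$, the $L^2(f^\star d\mu)$-derivative in the direction $(\eta,\beta)$ with $\sum_i\eta_i=0$ being $\tfrac12\sum_i\{\eta_i f_{\theta_i^\star}/f^\star+\beta_i^*D_1 f_{\theta_i^\star}/f^\star\}$). Controlling this Hellinger-to-$L^2$ remainder is the main technical point of the proof; the required estimates — and the substantially harder geometry that is needed for $q>q^\star$ — are developed in \cite{GasHanGeo}.

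The last ingredient is a non-degeneracy statement: the linear map $\Phi(\eta,\beta)=\sum_i\{\eta_i f_{\theta_i^\star}/f^\star+\beta_i^*D_1 f_{\theta_i^\star}/f^\star\}$, restricted to $\{\sum_i\eta_i=0\}$, is injective. This is again a Fourier argument: $\Phi(\eta,\beta)=0$ means $\hat f_0(\xi)\sum_i P_i(\xi)\,e^{-\ii\,\theta_i^\star\cdot\xi}=0$ for all $\xi$, with $P_i$ affine in $\xi$; since $\{\hat f_0\neq 0\}$ is open and nonempty while the quasi-polynomial $\sum_i P_i(\xi)e^{-\ii\,\theta_i^\star\cdot\xi}$ is real-analytic and, for distinct $\theta_i^\star$, not identically zero unless all $P_i\equiv 0$, we conclude $\eta=0$, $\beta=0$. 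As $\Phi$ is a map of finite-dimensional spaces, injectivity yields $\|L_k\|_2\asymp r_k$, whence $h(f_k,f^\star)=\tfrac12\|L_k\|_2(1+o(1))$ and $d_{f_k}=L_k/\|L_k\|_2+o(1)$ in $L^2(f^\star d\mu)$. Since $L_k/\|L_k\|_2$ lies on the unit sphere of the finite-dimensional subspace $V=\{\Phi(\eta,\beta):\sum_i\eta_i=0\}\subset L^2(f^\star d\mu)$, which is compact, every limit point of $d_{f_k}$ lies on that sphere; but that sphere is precisely the right-hand side of the lemma, so the inclusion ``$\subseteq$'' follows.

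For the reverse inclusion, fix $L=\Phi(\eta,\beta)\neq 0$ with $\sum_i\eta_i=0$, and for small $t>0$ set $f_t=\sum_{i=1}^{q^\star}(\pi_i^\star+t\eta_i)\,f_{\theta_i^\star+t\beta_i/\pi_i^\star}$. For $t$ small the weights are positive, and since each $\theta_i^\star$ lies in the interior of $\Theta$ the centers stay in $\Theta$, so $f_t\in\mathcal{M}_{q^\star}$; moreover $f_t\neq f^\star$ for $t>0$ (by identifiability, because $L\neq 0$ forces some $(\eta_i,\beta_i)\neq 0$). Applying the expansion above along this path gives $r_t\asymp t$ and $\sqrt{f_t/f^\star}-1=\tfrac12\,tL+o(t)$ in $L^2(f^\star d\mu)$, so $t\mapsto h(f_t,f^\star)$ is continuous with $h(f_t,f^\star)=\tfrac12\,t\|L\|_2(1+o(1))\to 0$ and $d_{f_t}\to L/\|L\|_2$ in $L^2(f^\star d\mu)$ as $t\to 0$. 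Hence $L/\|L\|_2\in\cl\mathcal{D}_{q^\star}(\varepsilon)$ for every $\varepsilon>0$, i.e. $L/\|L\|_2\in\mathcal{\bar D}_{q^\star}$ (in fact $L/\|L\|_2\in\mathcal{\bar D}_{q^\star}^c$). This completes both inclusions and hence the proof.
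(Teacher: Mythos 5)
Your proof is correct and follows the same skeleton as the paper's: reduce to sequences with converging weights and centers, Taylor-expand to second order to isolate the linear score term $L$, show $d_{f}=L/\|L\|_2+o(1)$ via a lower bound on $\|L\|_2$, and for the reverse inclusion run the explicit path $f_t=\sum_i(\pi_i^\star+t\eta_i)f_{\theta_i^\star+t\beta_i/\pi_i^\star}$. The only substantive difference is in how two sub-steps are justified: the paper invokes \cite[Theorem 3.7]{GasHanGeo} both for the reduction to $\pi_i^n\to\pi_i^\star$, $\theta_i^n\to\theta_i^\star$ and for the lower bound $\|L_n\|_1\ge c^\star\sum_i\pi_i^n\|\theta_i^n-\theta_i^\star\|$, whereas you derive the former from identifiability of location mixtures plus weak compactness of the mixing measures, and the latter from Fourier-analytic linear independence of $\{f_{\theta_i^\star},D_1f_{\theta_i^\star}\}$ together with norm equivalence on the finite-dimensional image of $\Phi$ --- both arguments are sound and make the lemma more self-contained, at the cost of reproving part of the cited geometric result. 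Like the paper, you still defer the genuinely delicate point (the uniform Hellinger-to-$L^2$ control of the quadratic remainder, \cite[Lemmas 3.12--3.13]{GasHanGeo}) to the companion paper, which is acceptable here. One shared loose end: both you and the paper need $\theta_i^\star+t\beta_i/\pi_i^\star\in\Theta$ for small $t$ and arbitrary $\beta_i$, which implicitly requires the $\theta_i^\star$ to be interior points of $\Theta$; you at least state this assumption explicitly.
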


\begin{proof}
Let $(f_n)_{n\ge 1}\subset\mathcal{M}_{q^\star}$ be such that
$h(f_n,f^\star)\to 0$ and $d_{f_n}\to d_0\in\mathcal{\bar D}_{q^\star}$.  
By \cite[Theorem 3.7]{GasHanGeo}, 
we may assume without loss of generality 
that $f_n=\sum_{i=1}^{q^\star}\pi_i^nf_{\theta_i^n}$ with 
$\theta_i^n\to\theta_i^\star$ and $\pi_i^n\to\pi_i^\star$ for 
$i=1,\ldots,q^\star$.  Taylor expansion gives
$$
        \frac{f_n-f^\star}{f^\star} = L_n + R_n,\quad
        \quad |R_n| \le 
        \frac{d}{2}\,H_2\sum_{i=1}^{q^\star}
        \pi_i^n\|\theta_i^n-\theta_i^\star\|^2,
$$
where
$$
        L_n =
        \sum_{i=1}^{q^\star}\left\{
        (\pi_i^n-\pi_i^\star)\,\frac{f_{\theta_i^\star}}{f^\star}
        +\pi_i^n(\theta_i^n-\theta_i^\star)^*\,
        \frac{D_1f_{\theta_i^\star}}{f^\star}
        \right\}.
$$
Proceeding as in 
\cite[Lemma 3.12--3.13]{GasHanGeo}, we can estimate
$$
        \left\|d_{f_n}-
        \frac{L_n}{\|L_n\|_2}\right\|_2 \le
        2\|S\|_4^2\{2\|S\|_2+1\}\,h(f_n,f^\star)+
        \{\|S\|_2+1\}\,\frac{\|R_n\|_2}{\|L_n\|_2}.
$$
But using \cite[Theorem 3.7]{GasHanGeo},
we have for $n$ sufficiently large
$$
        \|L_n\|_2 \ge \|L_n\|_1 \ge c^\star
        \sum_{i=1}^{q^\star}\pi_i^n\|\theta_i^n-\theta_i^\star\|.
$$
Thus we have
$$
        \frac{\|R_n\|_2}{\|L_n\|_2} \le
        \frac{d\|H_2\|_2}{2c^\star}
        \frac{\sum_{i=1}^{q^\star}
        \pi_i^n\|\theta_i^n-\theta_i^\star\|^2}{
        \sum_{i=1}^{q^\star}
        \pi_i^n\|\theta_i^n-\theta_i^\star\|}
        \le  \frac{d\|H_2\|_2}{2c^\star}\max_{i=1,\ldots,q^\star}
        \|\theta_i^n-\theta_i^\star\|\xrightarrow{n\to\infty}0.
$$
Therefore $L_n/\|L_n\|_2\to d_0$ in 
$L^2(f^\star d\mu)$.  Now define
\begin{eqnarray*}
        \eta_i^n=\frac{\pi_i^n-\pi_i^\star}{Z_n},\quad
        \beta_i^n=\frac{\pi_i^n(\theta_i^n-\theta_i^\star)}{Z_n},
        \\
        Z_n=\sum_{i=1}^{q^\star}
        \{|\pi_i^n-\pi_i^\star|+\|\pi_i^n(\theta_i^n-\theta_i^\star)\|\}.
\end{eqnarray*}
As $\sum_{i=1}^{q^\star}\{|\eta_i^n|+\|\beta_i^n\|\}=1$ for all $n$,
we may extract a subsequence such that $\eta_i^n\to\eta_i$,
$\beta_i^n\to\beta_i$, and $\sum_{i=1}^{q^\star}\{|\eta_i|+\|\beta_i\|\}=1$. 
We obtain immediately
$$
        d_0 = \frac{L}{\|L\|_2},\qquad
        L=
        \sum_{i=1}^{q^\star}\left\{
        \eta_i\,\frac{f_{\theta_i^\star}}{f^\star}
        +\beta_i^*
        \frac{D_1f_{\theta_i^\star}}{f^\star}\right\}.
$$
Clearly $\sum_{i=1}^{q^\star}\eta_i=0$.  Thus we have shown that
any $d_0\in\mathcal{\bar D}_{q^\star}$ has the desired form.

It remains to show that any function of the desired form is in fact an 
element of $\mathcal{\bar D}_{q^\star}$.  To this end, fix
$\eta_i\in\mathbb{R}$, $\beta_i\in\mathbb{R}^{d}$ with 
$\sum_{i=1}^{q^\star}\eta_i=0$, 
and define $f_t$ for $t>0$ as
$$
        f_t = 
        \sum_{i=1}^{q^\star}
        (\pi_i^\star+t\eta_i)\,
        f_{\theta_i^\star+\beta_it/\pi_i^\star}.
$$
Clearly $f_t\in\mathcal{M}_{q^\star}$ for all $t$ sufficiently small, and
$f_t\to f^\star$ as $t\to 0$.  But
$$
        \frac{f_t-f^\star}{t} =
        \sum_{i=1}^{q^\star}
        \pi_i^\star\,\frac{f_{\theta_i^\star+\beta_it/\pi_i^\star}
        -f_{\theta_i^\star}}{t}
        + \sum_{i=1}^{q^\star}
        \eta_i\,f_{\theta_i^\star+\beta_it/\pi_i^\star}.
$$
Therefore clearly
$$
        \frac{1}{t}\frac{f_t-f^\star}{f^\star}
        \xrightarrow{t\to 0}
        \sum_{i=1}^{q^\star}\left\{
        \eta_i\,\frac{f_{\theta_i^\star}}{f^\star}
        +\beta_i^*
        \frac{D_1f_{\theta_i^\star}}{f^\star}\right\} = L.
$$
Using \cite[Lemma 3.12]{GasHanGeo},
we obtain
$$
        \lim_{t\to 0}d_{f_t} = \lim_{t\to 0}
        \frac{(f_t-f^\star)/tf^\star}{
        \|(f_t-f^\star)/tf^\star\|_2} =
        \frac{L}{\|L\|_2}.
$$
Thus any function of the desired form is in $\mathcal{\bar D}_{q^\star}$.
\end{proof}

\begin{rem}
The above proof in fact shows that
$\mathcal{\bar D}_{q^\star}=\mathcal{\bar D}_{q^\star}^c$.
\end{rem}

We can now complete the proof of Proposition \ref{prop:mixconsist}.

\begin{proof}[Proof of Proposition \ref{prop:mixconsist}]
We first prove consistency of the penalty $\pen(n,q)=q\,\omega(n)$.
Note that by Theorem \ref{cor:localmixtures}, the assumption
of Corollary \ref{cor:mainconsist} holds with $\eta(q)=
18(d+1)q+1\le 19(d+1)q$.  Thus consistency of $\pen(n,q)=q\,\omega(n)$
follows directly from Corollary \ref{cor:mainconsist} using
$\varpi(n) = \omega(n)/19(d+1)$.

To prove that
the penalty $\pen(n,q)=C\,q\,\log\log n$ is inconsistent for 
$C>0$ sufficiently small, it suffices to show that $\mathcal{\bar 
D}_{q^\star+1}^c\backslash\mathcal{\bar D}_{q^\star}$ is nonempty.  
Indeed, if this is the case then we can apply Corollary 
\ref{cor:inconsist} with $q=q^\star+1$, where the requisite entropy 
assumption follows from Theorem \ref{thm:mainconsist}.

Fix $v\in\mathbb{R}^d$, and consider $f_t$ defined for 
$t>0$ as follows:
$$
        f_t = \frac{\pi_1^\star}{2}\,(f_{\theta_1^\star+vt}+
        f_{\theta_1^\star-vt})+
        \sum_{i=2}^{q^\star}\pi_i^\star f_{\theta_i^\star}.
$$
Clearly $f_t\in\mathcal{M}_{q^\star+1}$ for all $t$ sufficiently small,
$f_t\to f^\star$ as $t\to 0$, and 
$$
        \frac{f_t-f^\star}{t^2} = \frac{\pi_1^\star}{2}\,
        \frac{f_{\theta_1^\star+vt}-2\,f_{\theta_1^\star}
        +f_{\theta_1^\star-vt}}{t^2}
        \xrightarrow{t\to 0}
        \frac{\pi_1^\star}{2}\,v^*D_2f_{\theta_1^\star}v.
$$
As in the proof of Lemma \ref{lem:dqstar}, we find that
$$
        \lim_{t\to 0}d_{f_t} = \lim_{t\to 0}
        \frac{(f_t-f^\star)/t^2f^\star}{
        \|(f_t-f^\star)/t^2f^\star\|_2} =
        \frac{v^*D_2f_{\theta_1^\star}v}{\|v^*D_2f_{\theta_1^\star}v\|_2} 
	= d_0.
$$
By construction, $d_0\in\mathcal{\bar D}_{q^\star+1}^c$.  But
by \cite[Theorem 3.7]{GasHanGeo}, 
the functions $f_{\theta_i^\star}$,
$D_1f_{\theta_i^\star}$, and $v^*D_2f_{\theta_i^\star}v$ 
($i=1,\ldots,q^\star$) are all linearly independent.  Together with Lemma 
\ref{lem:dqstar}, this shows that $d_0\not\in\mathcal{\bar D}_{q^\star}$.  
Thus $d_0\in\mathcal{\bar D}_{q^\star+1}^c\backslash\mathcal{\bar 
D}_{q^\star}$.
\end{proof}

\vskip.2cm

\textbf{Acknowledgment.}
The authors would like to thank Michel Ledoux for suggesting some helpful references.

%
\bibliographystyle{IEEEtran}

\begin{thebibliography}{10}
\providecommand{\url}[1]{#1}
\csname url@samestyle\endcsname
\providecommand{\newblock}{\relax}
\providecommand{\bibinfo}[2]{#2}
\providecommand{\BIBentrySTDinterwordspacing}{\spaceskip=0pt\relax}
\providecommand{\BIBentryALTinterwordstretchfactor}{4}
\providecommand{\BIBentryALTinterwordspacing}{\spaceskip=\fontdimen2\font plus
\BIBentryALTinterwordstretchfactor\fontdimen3\font minus
  \fontdimen4\font\relax}
\providecommand{\BIBforeignlanguage}[2]{{%
\expandafter\ifx\csname l@#1\endcsname\relax
\typeout{** WARNING: IEEEtran.bst: No hyphenation pattern has been}%
\typeout{** loaded for the language `#1'. Using the pattern for}%
\typeout{** the default language instead.}%
\else
\language=\csname l@#1\endcsname
\fi
#2}}
\providecommand{\BIBdecl}{\relax}
\BIBdecl

\bibitem{CS00}
I.~Csiszar and P.~C. Shields, ``The consistency of {BIC} {M}arkov order
  estimator,'' \emph{Annals of Stat.}, vol.~28, pp. 1601--1619, 2000.

\bibitem{HQ79}
E.~J. Hannan and B.~G. Quinn, ``The determination of the order of an
  autoregression,'' \emph{J. Roy. Statist. Soc. Ser. B}, vol.~41, pp. 190--195,
  1979.

\bibitem{Nis88}
R.~Nishii, ``Maximum likelihood principle and model selection when the true
  model is unspecified,'' \emph{J. Multivariate Anal.}, vol.~27, pp. 392--403,
  1988.

\bibitem{Fin90}
L.~Finesso, ``Consistent estimation of the order for {M}arkov and hidden
  {M}arkov chains,'' 1990, {Ph.D.} Thesis, Univ. of Maryland.

\bibitem{Ker00}
C.~Keribin, ``Consistent estimation of the order of mixture models,''
  \emph{Sankhya Ser. A}, vol.~62, pp. 49--66, 2000.

\bibitem{Cha06}
A.~Chambaz, ``Testing the order of a model,'' \emph{Ann. Statist.}, vol.~34,
  pp. 1166--1203, 2006.

\bibitem{Har85}
J.~A. Hartigan, ``A failure of likelihood asymptotics for normal mixtures,'' in
  \emph{Proceedings of the {B}erkeley conference in honor of {J}erzy {N}eyman
  and {J}ack {K}iefer, {V}ol.\ {II} ({B}erkeley, {C}alif., 1983)}.\hskip 1em
  plus 0.5em minus 0.4em\relax Belmont, CA: Wadsworth, 1985, pp. 807--810.

\bibitem{BC93}
P.~Bickel and H.~Chernoff, ``Asymptotic distribution of the likelihood ratio
  statistic in a prototypical non regular problem,'' in \emph{Statistics and
  Probability: a {R}aghu {R}aj {B}ahadur festschrift}.\hskip 1em plus 0.5em
  minus 0.4em\relax New Delhi: Wiley Eastern Ltd., 1993, pp. 83--96.

\bibitem{LS04}
X.~Liu and Y.~Shao, ``Asymptotics for the likelihood ratio test in a
  two-component normal mixture model,'' \emph{J. Statist. Plann. Inference},
  vol. 123, no.~1, pp. 61--81, 2004.

\bibitem{Kie93}
J.~C. Kieffer, ``Strongly consistent code-based identification and order
  estimation for constrained finite-state model classes,'' \emph{IEEE Trans.
  Inform. Theory}, vol.~39, no.~3, pp. 893--902, 1993.

\bibitem{LN94}
C.-C. Liu and P.~Narayan, ``Order estimation and sequential universal data
  compression of a hidden {M}arkov source by the method of mixtures,''
  \emph{IEEE Trans. Inform. Theory}, vol.~40, no.~4, pp. 1167 --1180, 1994.

\bibitem{GB03}
E.~Gassiat and S.~Boucheron, ``Optimal error exponents in hidden {M}arkov model
  order estimation,'' \emph{IEEE Trans. Info. Theory}, vol.~48, pp. 964--980,
  2003.

\bibitem{CMR05}
O.~Capp{\'e}, E.~Moulines, and T.~Ryd{\'e}n, \emph{Inference in hidden {M}arkov
  models}.\hskip 1em plus 0.5em minus 0.4em\relax New York: Springer, 2005.

\bibitem{CGG09}
A.~Chambaz, A.~Garivier, and E.~Gassiat, ``A {MDL} approach to {HMM} with
  {P}oisson and {G}aussian emissions. {A}pplication to order indentification,''
  \emph{Journal of Stat. Planning and Inf.}, vol. 139, pp. 962--977, 2009.

\bibitem{Ris86}
J.~Rissanen, ``Stochastic complexity and modeling,'' \emph{Ann. Statist.},
  vol.~14, pp. 1080--1100, 1986.

\bibitem{BRY99}
A.~Barron, J.~Rissanen, and B.~Yu, ``The minimum description length principle
  in coding and modeling,'' \emph{IEEE Trans. Inform. Theory}, vol.~44, pp.
  2743--2760, 1998.

\bibitem{Csis02}
I.~Csiszar, ``Large-scale typicality of {M}arkov sample paths and consistency
  of {MDL} order estimators,'' \emph{IEEE Trans. Info. Theory}, vol.~48, pp.
  1616--1628, 2002.

\bibitem{CsisTala}
I.~Csiszar and Z.~Talata, ``Context tree estimation for not necessarily finite
  memory processes, via {BIC} and {MDL},'' \emph{IEEE Trans. Info. Theory},
  vol.~52, pp. 1007--1616, 2006.

\bibitem{Han09}
R.~{van Handel}, ``On the minimal penalty for {M}arkov order estimation,''
  \emph{Probab. Th. Rel. Fields}, vol. 150, pp. 709--738, 2011.

\bibitem{GK00}
E.~Gassiat and C.~Keribin, ``The likelihood ratio test for the number of
  components in a mixture with {M}arkov regime,'' \emph{ESAIM Probab.
  Statist.}, vol.~4, pp. 25--52, 2000.

\bibitem{vdV98}
A.~W. van~der Vaart, \emph{Asymptotic statistics}.\hskip 1em plus 0.5em minus
  0.4em\relax Cambridge: Cambridge University Press, 1998.

\bibitem{GasHanGeo}
E.~Gassiat and R.~van Handel, ``The local geometry of finite mixtures,'' 2012,
  preprint.

\bibitem{vdG00}
S.~A. van~de Geer, \emph{Applications of empirical process theory}.\hskip 1em
  plus 0.5em minus 0.4em\relax Cambridge: Cambridge University Press, 2000.

\bibitem{Bil99}
P.~Billingsley, \emph{Convergence of probability measures}, 2nd~ed.\hskip 1em
  plus 0.5em minus 0.4em\relax New York: John Wiley \& Sons Inc., 1999.

\bibitem{Oss87}
M.~Ossiander, ``A central limit theorem under metric entropy with {$L_2$}
  bracketing,'' \emph{Ann. Probab.}, vol.~15, pp. 897--919, 1987.

\bibitem{LT89}
M.~Ledoux and M.~Talagrand, ``Comparison theorems, random geometry and some
  limit theorems for empirical processes,'' \emph{Ann. Probab.}, vol.~17, pp.
  596--631, 1989.

\bibitem{Gas02}
E.~Gassiat, ``Likelihood ratio inequalities with applications to various
  mixtures,'' \emph{Ann. Inst. H. Poincar\'e Probab. Statist.}, vol.~38, pp.
  897--906, 2002.

\bibitem{LiuShao03}
X.~Liu and Y.~Shao, ``Asymptotics for likelihood ratio tests under loss of
  identifiability,'' \emph{Ann. Statist.}, vol.~31, pp. 807--832, 2003.

\end{thebibliography}


%





\end{document}